\newcommand*{\cc}[1]{\cellcolor[gray]{#1}}
\setlist{%
  align=left,%
  labelsep=*,%
  leftmargin=*,%
  topsep=1mm,%
  itemsep=0mm%
}
\newcommand*{\mysquare}{\rule[0.18em]{0.36em}{0.36em}}
\newcommand*{\mytriangle}{\raisebox{0.12em}{\resizebox{0.48em}{0.48em}{$\blacktriangleright$}}}
\newcommand*{\mybar}{\rule[0.32em]{0.62em}{0.08em}}
\newcommand*{\mydot}{\raisebox{0.14em}{\resizebox{0.44em}{!}{$\bullet$}}}
\setlist[itemize,1]{label={\mysquare\ }}%
\setlist[itemize,2]{label={\mytriangle\ }}%
\setlist[itemize,3]{label={\mybar\ }}%
\setlist[itemize,4]{label={\mydot\ }}%
\setlist[enumerate,1]{label=\arabic*)}%
\setlist[enumerate,2]{label=\arabic{enumi}.\arabic*)}%
\setlist[enumerate,3]{label=\arabic{enumi}.\arabic{enumii}.\arabic*)}%
\newcommand\myisodate{\number\year-\ifcase\month\or 01\or 02\or 03\or 04\or 05\or 06\or 07\or 08\or 09\or 10\or 11\or 12\fi-\ifcase\day\or 01\or 02\or 03\or 04\or 05\or 06\or 07\or 08\or 09\or 10\or 11\or 12\or 13\or 14\or 15\or 16\or 17\or 18\or 19\or 20\or 21\or 22\or 23\or 24\or 25\or 26\or 27\or 28\or 29\or 30\or 31\fi}%
\newcommand*{\abstractnoindent}{}%
\let\abstractnoindent\abstract
\renewcommand*{\abstract}{\let\quotation\quote\let\endquotation\endquote
  \abstractnoindent}
\lstdefinestyle{input}{
  backgroundcolor=\color{semilightgray},%
  commentstyle=\itshape\color{chocolate},%
  keywordstyle=\color{blue},%
  stringstyle=\color{blue},%
  numbers=left,%
  numbersep=4.8pt,%
  numberstyle=\color{darkgray!80}\tiny%
}
\lstdefinestyle{output}{
  backgroundcolor=\color{lightgray}%
}
\lstdefinestyle{Lstyle}{
  language=[LaTeX]TeX,%
  texcs={},%
  otherkeywords={}%
}
\lstdefinestyle{Rstyle}{
  language=R,%
  keywords={if, else, repeat, while, function, for, in, next, break},%
  otherkeywords={}%
}
\renewcommand*{\cite}[2][]{\textcite[#1]{#2}}%
\newif\ifstarttheorem
\declaretheoremstyle[%
  spaceabove=0.5em,
  spacebelow=0.5em,
  headfont=\bfseries\global\starttheoremtrue,%
  notefont=\bfseries,%
  notebraces={(}{)},
  headpunct={},
  bodyfont=\normalfont,
  postheadspace=\newline%
]{myMainStyle}
\declaretheorem[style=myMainStyle]{definition}%
\declaretheorem[style=myMainStyle]{proposition}
\declaretheorem[style=myMainStyle]{theorem}
\declaretheorem[style=myMainStyle]{corollary}
\declaretheorem[style=myMainStyle]{remark}
\declaretheorem[style=myMainStyle]{example}
\declaretheorem[style=myMainStyle]{algorithm}
\preto\itemize{%
  \if@inlabel
    \ifstarttheorem
      \mbox{}\par\nobreak\vskip\glueexpr-\parskip-\baselineskip+0.25em\relax\hrule\@height\z@
    \fi%
  \fi%
  \global\starttheoremfalse%
 \def\tempa{proof}%
 \ifx\tempa\mycurrenvir
    \ifstarttheorem
      \mbox{}\par\nobreak\vskip\glueexpr-\parskip-\baselineskip+0.25em\relax\hrule\@height\z@
    \fi%
 \fi%
 \global\starttheoremfalse%
}
\preto\enditemize{\global\starttheoremfalse}
\preto\enumerate{%
  \if@inlabel
    \ifstarttheorem
      \mbox{}\par\nobreak\vskip\glueexpr-\parskip-\baselineskip+0.25em\relax\hrule\@height\z@
    \fi%
  \fi%
  \global\starttheoremfalse%
 \def\tempa{proof}%
 \ifx\tempa\mycurrenvir
    \ifstarttheorem
      \mbox{}\par\nobreak\vskip\glueexpr-\parskip-\baselineskip+0.25em\relax\hrule\@height\z@
    \fi%
 \fi%
 \global\starttheoremfalse%
}
\preto\endenumerate{\global\starttheoremfalse}
\newcommand{\ou}[3]{%
  \mathrel{%
    \vcenter{\offinterlineskip
      \ialign{##\cr$#1$\cr\noalign{\kern-#3}$#2$\cr}%
    }%
  }%
}
\newcommand*{\omu}[3]{\underset{#3}{\overset{#1}{#2}}}
\newcommand*{\T}{^{\top}}
\renewcommand*{\i}{{-1}}
\newcommand*{\deq}{\omu{\text{\tiny{d}}}{=}{}}
\newcommand*{\IN}{\mathbb{N}}
\newcommand*{\IR}{\mathbb{R}}
\newcommand*{\Bern}{\operatorname{Bern}}
\newcommand*{\LN}{\operatorname{LN}}
\newcommand*{\U}{\operatorname{U}}
\newcommand*{\B}{\operatorname{B}}
\newcommand*{\N}{\operatorname{N}}
\newcommand*{\I}{\mathbbm{1}}
\newcommand*{\rd}{\mathrm{d}}
\renewcommand*{\P}{\mathbb{P}}
\newcommand*{\E}{\mathbb{E}}
\newcommand*{\conv}{\operatorname{conv}}
\newcommand*{\Var}{\operatorname{Var}}
\newcommand*{\Cov}{\operatorname{Cov}}
\newcommand*{\R}{\textsf{R}}
\newcommand{\bbb}{\bm{b}}
\newcommand{\bc}{\bm{c}}
\newcommand{\bZ}{\bm{Z}}
\newcommand{\bW}{\bm{W}}
\newcommand{\bU}{\bm{U}}
\newcommand{\bV}{\bm{V}}
\begin{document}
\thispagestyle{plain}
\begin{center}
  {\Large COMPATIBILITY AND ATTAINABILITY OF MATRICES OF CORRELATION-BASED MEASURES OF CONCORDANCE\par}%
  \bigskip\smallskip
  BY\par %
  \bigskip\smallskip %
  {\Large{\scshape Marius Hofert and Takaaki Koike}\par}%
\end{center}
\par\smallskip
\begin{abstract}
  Measures of concordance have been widely used in insurance and risk management
  to summarize non-linear dependence among risks modeled by random variables,
  which Pearson's correlation coefficient cannot capture. However, popular
  measures of concordance, such as Spearman's rho and Blomqvist's beta, appear
  as classical correlations of transformed random variables.  We characterize a
  whole class of such concordance measures arising from correlations of
  transformed random variables, which includes Spearman's rho, Blomqvist's beta
  and van der Waerden's coefficient as special cases.  Compatibility and
  attainability of square matrices with entries given by such measures are
  studied, that is, whether a given square matrix of such measures of
  concordance can be realized for some random vector and how such a random
  vector can be constructed. Compatibility and attainability of block matrices
  and hierarchical matrices are also studied due to their practical importance
  in insurance and risk management. In particular, a subclass of attainable
  block Spearman's rho matrices is proposed to compensate for the drawback that
  Spearman's rho matrices are in general not attainable for dimensions larger
  than four.  Another result concerns a novel analytical form of the Cholesky
  factor of block matrices which allows one, for example, to construct random
  vectors with given block matrices of van der Waerden's coefficient.
\end{abstract}
\begin{center}%
{\scshape Keywords}%
\end{center}%
Transformed rank correlation coefficients, matrices of pairwise measures of
concordance, compatibility, attainability, copula models, Cholesky factor, block
correlation matrices, hierarchical matrices.

\section{Introduction}
Since the work of \cite{embrechtsmcneilstraumann1999}, copulas have been widely
adopted in insurance and risk management to quantify dependence between
continuously distributed random variables; see
\cite{genestgendronbourdeaubrien2009}. To summarize the dependence captured by
the copula by a single number, measures of concordance are frequently used. For
more than two random variables, multivariate measures of concordance
exist but are typically not unique extensions of their
bivariate counterparts to higher dimensions; see \cite{joe1990multivariate},
\cite[Chapter~10]{jaworskidurantehaerdlerychlik2010} and references
therein. Similar to the notion of correlation, matrices of (pairwise) measures
of concordance have recently become of interest; see, for
example, \cite{embrechtshofertwang2016} (motivated from an application in
insurance practice) for the notion of tail dependence. For such matrices of
measures of concordance, we study their compatiblity and
attainability. \emph{Compatibility} concerns whether a
given square matrix can be realized as a matrix of measures of concordance
of some random vector, and \emph{attainability} asks how
to construct such a random vector.  These notions are important in insurance and
risk management practice since the entries of matrices of pairwise measures of
concordance are often provided as estimates from real data (if available) or
from expert opinion based on scenarios (if no data is available or not directly
usable to estimate the entries). A primary issue is then to determine whether
the given matrix is admissible as a matrix of pairwise measures of concordance
and, if so, an appropriate model is built on the assumption of admissibility of
the given matrix; see \cite{embrechtsmcneilstraumann2002} and
\cite[Section~8.4]{mcneilfreyembrechts2015} for a discussion on compatibility
and attainability.

Note that compatibility is clear for Pearson's correlation coefficient since a
given $[-1,1]$-valued symmetric matrix $P$ is compatible if and only if it is
positive semi-definite and has diagonal entries equal to one. Also,
attainability is clear for Pearson's correlation coefficient since any symmetric
and positive semi-definite matrix $P$ with ones on the diagonal is attainable by
$\bm{X}=A\bm{Z}$ where $\bm{Z}$ is a random vector of independent standard
normal distributions and $A$ is the \emph{Cholesky factor} of $P$, that is, a
lower triangular matrix with non-negative diagonal entries and such that
$P=AA\T$.

Although compatibility and attainability of correlation matrices are thus
trivial, the limitations of Pearson's correlation coefficient as a dependence
measure are well known; see \cite{embrechtsmcneilstraumann2002}. Measures of
concordance in the sense of \cite{scarsini1984} are a remedy for some of the
pitfalls of the correlation coefficient and are thus considered more suitable to
summarize dependence between risks. Interestingly, such measures can also arise
as correlations, Spearman's rho, Blomqvist's beta and van der Waerden's
coefficient being prominent examples, all being correlations of transforms of
the underlying random variables.

Block matrices of measures of concordance naturally emerge if the risks of
interest are grouped based on business line, industry, country, etc.; see, for
example, \cite{huang2010correlation}. Hierarchical matrices are important
special cases of block matrices where a measure of concordance between two
variables is determined by an underlying hierarchical tree structure; see
\cite{hofertscherer2011} for an application to CDO pricing. Since such matrices are
typically high-dimensional, it is practically important to reduce the dimension
to solve compatibility and attainability problems in this case.

In this paper, we answer the following open questions, which naturally arise
regarding compatibility and attainability of transformed correlation
coefficients:
\begin{enumerate}
\item Are there more concordance measures which arise as correlations, and if
  so, how can they be characterized or constructed? (See Section~\ref{sec:cor:based:measures})
\item What about the compatibility and attainability of matrices of
  such measures? (See Section~\ref{sec:compat})
\item Can compatibility and attainability be reduced to lower
  dimensional problems if a matrix has block structure? (See
  Section~\ref{sec:block:mat})
\end{enumerate}

\section{Correlation-based measures of concordance}\label{sec:cor:based:measures}
We start by considering the bivariate case. To this end, let $X_1\sim F_1$ and
$X_2\sim F_2$ be two continuously distributed random variables with a
  unique copula $C$ such that $(U_1,U_2)=(F_1(X_1),F_2(X_2)) \sim C$.  The
  measures of concordance of $(X_1,X_2)$ we consider are of the form
  \begin{align}
    \kappa_{g_1,g_2}(X_1,X_2) = \rho\bigl(g_1(F_1(X_1)),g_2(F_2(X_2))\bigr), \label{transformed:g1g2cor}
  \end{align}
  where $g_1: [0,1]\rightarrow \IR$ and $g_2: [0,1]\rightarrow \IR$ are
  measurable functions, and $\rho$ is Pearson's correlation coefficient. Since
  \eqref{transformed:g1g2cor} depends only on the copula of $(X_1,X_2)$, we also
  denote it by $\kappa_{g_1,g_2}(C)=\rho(g_1(U_1),g_2(U_2))$ for
  $(U_1,U_2)\sim C$.  We are interested in conditions on $g_1$ and $g_2$ under
  which \eqref{transformed:g1g2cor} is a measure of concordance in the sense of
  \cite{scarsini1984}.  The following proposition provides a necessary condition
  on $g_1$ and $g_2$.

  \begin{proposition}[Monotonicity of $g_1$ and $g_2$]\label{prop:monotonicity:g}
  Suppose $g_1: [0,1]\rightarrow \IR$ and $g_2: [0,1]\rightarrow \IR$ are continuous functions. If $\kappa_{g_1,g_2}$ defined in \eqref{transformed:g1g2cor} is a measure of concordance, then $g_1$ and $g_2$ must be both increasing or both decreasing, that is,
  \begin{align*}
  (g_1(u')-g_1(u)) (g_2(v')-g_2(v))\geq 0,
  \end{align*}
for any $0\le u<u'\le 1$ and $0\le v<v'\le 1$.
  \end{proposition}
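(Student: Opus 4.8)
The plan is to derive the inequality by contradicting the monotonicity (concordance-ordering) axiom satisfied by every measure of concordance, using a two-parameter family of ``rectangular'' perturbations of the independence copula $\Pi(u,v)=uv$. First I would dispose of the trivial case: if $g_1$ or $g_2$ is constant on $[0,1]$ the asserted inequality reads $0\ge 0$, so one may assume both are non-constant. Being continuous on the compact set $[0,1]$ they are then bounded, and $\sigma_i^2:=\Var(g_i(U))\in(0,\infty)$ for $U\sim\U(0,1)$, so $\kappa_{g_1,g_2}(C)$ is a well-defined real number for every copula $C$.

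Now fix $0\le u<u'\le 1$ and $0\le v<v'\le 1$. For a small length $\ell>0$ I would choose disjoint closed subintervals $I_1\ni u$ and $I_2\ni u'$ of $[0,1]$, each of length $\ell$, with $I_1$ lying to the left of $I_2$, and likewise $J_1\ni v$, $J_2\ni v'$ of length $\ell$ with $J_1$ to the left of $J_2$ (at the endpoints of $[0,1]$ one uses one-sided intervals; only the equal lengths and the left/right ordering will matter). For $\epsilon\in(0,1]$ put
\[
  c^{\pm}(x,y)=1\pm\epsilon\,\bigl(\I_{I_1}(x)-\I_{I_2}(x)\bigr)\bigl(\I_{J_1}(y)-\I_{J_2}(y)\bigr),\qquad (x,y)\in[0,1]^2.
\]
I would check that $c^{\pm}\ge 0$ (the perturbation has modulus $\le\epsilon\le 1$), that $\int_0^1 c^{\pm}(x,y)\,\dd x=\int_0^1 c^{\pm}(x,y)\,\dd y=1$ because $|I_1|=|I_2|$ and $|J_1|=|J_2|$, hence each $c^{\pm}$ is a copula density; and that, with $A_k(s):=|[0,s]\cap I_k|$ and $B_k(t):=|[0,t]\cap J_k|$, its copula satisfies $C^{\pm}(s,t)-\Pi(s,t)=\pm\epsilon\,(A_1(s)-A_2(s))(B_1(t)-B_2(t))$. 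Since $I_1$ lies to the left of $I_2$ one has $A_1(s)\ge A_2(s)\ge 0$ for all $s$, and likewise $B_1(t)\ge B_2(t)\ge 0$, so $C^{-}\preceq\Pi\preceq C^{+}$ in the concordance order; in particular $C^{-}\preceq C^{+}$.

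Applying the concordance-ordering axiom to $\kappa_{g_1,g_2}$ then gives $\kappa_{g_1,g_2}(C^{-})\le\kappa_{g_1,g_2}(C^{+})$. All three copulas have uniform margins, so $\E[g_i(U_i)]$ and $\Var(g_i(U_i))=\sigma_i^2$ are unchanged; and, the perturbation in $c^{\pm}$ factoring as a product in $x$ and $y$, $\Cov_{C^{\pm}}(g_1(U_1),g_2(U_2))=\pm\epsilon\,(\int_{I_1}g_1-\int_{I_2}g_1)(\int_{J_1}g_2-\int_{J_2}g_2)$, whence
\[
  \kappa_{g_1,g_2}(C^{\pm})=\pm\frac{\epsilon}{\sigma_1\sigma_2}\Bigl(\textstyle\int_{I_1}g_1-\int_{I_2}g_1\Bigr)\Bigl(\int_{J_1}g_2-\int_{J_2}g_2\Bigr).
\]
The inequality $\kappa_{g_1,g_2}(C^{-})\le\kappa_{g_1,g_2}(C^{+})$ then collapses to $(\int_{I_1}g_1-\int_{I_2}g_1)(\int_{J_1}g_2-\int_{J_2}g_2)\ge 0$. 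Dividing by $\ell^2>0$ and letting $\ell\downarrow0$, continuity of $g_1,g_2$ gives $\ell^{-1}\!\int_{I_1}g_1\to g_1(u)$, $\ell^{-1}\!\int_{I_2}g_1\to g_1(u')$, $\ell^{-1}\!\int_{J_1}g_2\to g_2(v)$, $\ell^{-1}\!\int_{J_2}g_2\to g_2(v')$, so in the limit $(g_1(u)-g_1(u'))(g_2(v)-g_2(v'))\ge 0$, i.e. $(g_1(u')-g_1(u))(g_2(v')-g_2(v))\ge 0$, as claimed.

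I expect the main obstacle to be the construction and verification of the perturbed copulas $C^{\pm}$ — in particular confirming that they are legitimate copulas and that they bracket $\Pi$ in the concordance order; the covariance identity and the limit $\ell\downarrow0$ are routine. (One could instead use suitable ordinal sums or shuffles of $M$, but the density perturbation seems to make both checks the most transparent.)
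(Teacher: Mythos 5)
Your proof is correct and is essentially the paper's argument: your densities $c^{\mp}$ at $\epsilon=1$ are exactly the paper's checkerboard densities $\delta$ and $\tilde\delta$ (mass $0$ and $2$ on the four rectangles $I_i\times J_j$, $1$ elsewhere), and you then apply the same coherence-plus-shrinking-rectangles limit. The only substantive difference is that you verify $C^-\preceq C^+$ by computing $C^{\pm}-\Pi$ pointwise rather than by integrating supermodular functions, which is a harmless (and slightly cleaner) shortcut.
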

  \begin{proof}
    For $0\le u<u'\le 1$ and $0\le v<v'\le 1$, there exists a sufficiently large $N \in \IN$ and indices $i,i',j,j' \in \{1,\dots,N\}$ such that
  \begin{align*}
\frac{i-1}{N}<u \leq \frac{i}{N},\quad
\frac{i'-1}{N}<u' \leq \frac{i'}{N},\quad
\frac{j-1}{N}<v \leq \frac{j}{N},\quad
\frac{j'-1}{N}<v' \leq \frac{j'}{N}
  \end{align*}
  with $(\frac{i-1}{N},\frac{i}{N}]\cap (\frac{i'-1}{N},\frac{i'}{N}]=\emptyset$ and $(\frac{j-1}{N},\frac{j}{N}]\cap (\frac{j'-1}{N},\frac{j'}{N}]=\emptyset$.
  Let
    \begin{align*}
    \delta(x,y) &=\begin{cases}
      0, & (x,y) \in (\frac{i-1}{N},\frac{i}{N}]\times (\frac{j-1}{N},\frac{j}{N}] \cup (\frac{i'-1}{N},\frac{i'}{N}]\times (\frac{j'-1}{N},\frac{j'}{N}], \\
      2, &  (x,y) \in (\frac{i-1}{N},\frac{i}{N}]\times (\frac{j'-1}{N},\frac{j'}{N}] \cup (\frac{i'-1}{N},\frac{i'}{N}]\times (\frac{j-1}{N},\frac{j}{N}],\\
      1, & \text{otherwise},
    \end{cases}
    \end{align*}
    and
    \begin{align*}
    \tilde \delta(x,y) &=\begin{cases}
      2, &  (x,y)\in (\frac{i-1}{N},\frac{i}{N}]\times (\frac{j-1}{N},\frac{j}{N}] \cup (\frac{i'-1}{N},\frac{i'}{N}]\times (\frac{j'-1}{N},\frac{j'}{N}],\\
      0, &  (x,y) \in (\frac{i-1}{N},\frac{i}{N}]\times (\frac{j'-1}{N},\frac{j'}{N}] \cup (\frac{i'-1}{N},\frac{i'}{N}]\times (\frac{j-1}{N},\frac{j}{N}],\\
      1, & \text{otherwise},
    \end{cases}
  \end{align*}
  and let $Q_N$ and $\tilde Q_N$ be checkerboard copulas having densities $\delta$ and $\tilde \delta$, respectively; see \cite{carley2002new}.
  Then $Q_N \preceq \tilde Q_N$ (in concordance order), since for any supermodular function $\psi$ on $(0,1)^2$,
  \begin{align*}
  \int\psi\,\rd \tilde Q_N -   \int\psi\,\rd Q_N &= \int \psi \,\rd (\tilde Q_N-Q_N)\\
                                                 &=2 \int_{(0,1/N)^2}(\psi(i'-1+s,\ j'-1+t)+\psi(i-1+s,\ j-1+t)\\
                                                 &\phantom{=2 \int_{(0,1/N)^2}(} -\psi(i'-1+s,\ j-1+t)-\psi(i-1+s,\ j'-1+t))\,\rd s\,\rd t\geq 0,
  \end{align*}
  where the last inequality follows since the integrand is nonnegative for any $(s,t)\in (0,1/N)^2$ by supermodularity of $\psi$.
  The inequality $\int\psi\,\rd \tilde Q_N -   \int\psi\,\rd Q_N\geq 0$ for any supermodular function $\psi$ implies $Q_N \preceq \tilde Q_N$; see \cite{tchen1980inequalities} and \cite{muellerscarsini2000}.
  Since $\kappa_{g_1,g_2}$ is a measure of concordance, coherence of $\kappa_{g_1,g_2}$ implies that $\kappa_{g_1,g_2}(Q_N)\leq \kappa_{g_1,g_2}(\tilde Q_N)$, that is,
  \begin{align*}
  0\le \kappa_{g_1,g_2}(\tilde Q_N)-  \kappa_{g_1,g_2}(Q_N)&=\int_{(0,1)^2}g_1(U_1)g_2(U_2)\rd (\tilde Q_N - Q_N)\\ %
  &= 2 \int_{(0,1/N)^2}(g_1(i'-1+s)g_2(j'-1+t)+g_1(i-1+s)g_2(j-1+t)\\
  &\phantom{=2 \int_{(0,1/N)^2}(}-g_1(i'-1+s)g_2(j-1+t)-g_1(i-1+s)g_2(j'-1+t))\,\rd s\,\rd t;
  \end{align*}
  see \cite{scarsini1984} for the coherence axiom of a measure of concordance.
 Since $g_1$ and $g_2$ are continuous, apply the intermediate value theorem and let $N \rightarrow \infty$ to obtain that
  \begin{align*}
   g_1(u')g_2(v')+g_1(u)g_2(v)-g_1(u')g_2(v)-g_1(u)g_2(v')= (g_1(u')-g_1(u))(g_2(v')-g_2(v)) \geq 0,
  \end{align*}
  which shows that $g_1$ and $g_2$ are both increasing or both decreasing.
  \end{proof}
  By Proposition~\ref{prop:monotonicity:g}, $g_1$ and $g_2$ must be monotone
  with each other so that $\kappa_{g_1,g_2}$ is a measure of concordance.
  Therefore, it is reasonable to assume that $g_1$ and $g_2$ are both increasing
  functions on $[0,1]$ since, if both are decreasing, then
  $\kappa_{g_1,g_2}=\kappa_{\tilde g_1,\tilde g_2}$ for the increasing functions
  $\tilde g_1=1-g_1$ and $\tilde g_2=1-g_2$ by invariance of the correlation
  coefficient under linear transformations. If we relax the assumption of continuity
  of $g_1$ and $g_2$ to left-continuity, then $g_1$ and $g_2$ are quantiles of
  some distributions, say, $G_1$ and $G_2$. Recall that for a distribution function
  $G:\IR \rightarrow [0,1]$, its \emph{quantile function} is defined by
  \begin{align*}
    G^\i(p)=\inf\{x\in\IR:G(x)\ge p\},\quad p\in(0,1);
  \end{align*}
  see \cite{embrechtshofert2013c}.  By taking $g_1=G_1^\i$ and $g_2=G_2^\i$, we now
  define the $(G_1,G_2)$-\emph{transformed rank correlation coefficient} as
  follows.
  \begin{definition}[$(G_1,G_2)$-transformed rank correlation coefficient]
  Let $G_1 $ and $G_2$ be two distribution functions with quantile functions $G_1^\i$ and $G_2^\i$, respectively.
  For a random vector $(X_1,X_2)$ with continuous margins $F_1$ and $F_2$, the \emph{$(G_1,G_2)$-transformed rank correlation coefficient} is defined by
  \begin{align}
  \kappa_{G_1,G_2}(X_1,X_2) = \rho\bigl(G^\i_1(F_1(X_1)),G^\i_2(F_2(X_2))\bigr). \label{transformed:cor}
\end{align}
If $G_1=G_2=G$, $\kappa_{G,G}$ is denoted by $\kappa_{G}$ and
referred to as \emph{$G$-transformed rank correlation coefficient}.
  \end{definition}

\begin{example}[Known special cases of $\kappa_{G_1,G_2}$]\label{ex:spearman:beta:waerden}
  \begin{enumerate}
  \item If $G$ is the distribution function of the standard uniform distribution $\U(0,1)$, we obtain
    \begin{align*}
      \kappa_{G}(X_1,X_2) = \rho(F_1(X_1),F_2(X_2))
    \end{align*}
    from~\eqref{transformed:cor}.
    This is known as \emph{Spearman's rho} $\rho_{\text{S}}$; see \cite{spearman1904general}.
  \item\label{ex:spearman:beta:waerden:b} If $G$ is the distribution function of the symmetric Bernoulli distribution $\Bern(1/2)$, that is,
    \begin{align*}
      G(x)=\begin{cases}
        0, &  x < 0, \\
        1/2,  & 0\leq x < 1, \\
        1,  & x \geq 1,
      \end{cases}
    \end{align*}
    then $G^\i(p)=\I_{\{1/2<p\le 1\}}$ for $p\in (0,1)$. Therefore, since
    $U_j=F_j(X_j) \sim \U(0,1)$, $j=1,2$, \eqref{transformed:cor} is the
    correlation coefficient of $B_j = G_j^\i(F_j(X_j))\sim \Bern(1/2)$,
    $j=1,2$. If $C$ denotes the distribution function of $(U_1,U_2)$ and
    $G_1=G_2=G$, then
    \begin{align*}
      \kappa_{G}(X_1,X_2) &= \frac{\E(B_1B_2)-\E(B_1)\E(B_2)}{\sqrt{\Var(B_1)\Var(B_2)}}=\frac{\P(U_1>1/2, U_2>1/2)-1/4}{1/4}\notag\\
                              &= 4 \P(U_1>1/2, U_2>1/2) - 1 = 4 (1-1/2-1/2+C(1/2,1/2)) - 1\notag\\
                              &= 4C(1/2,1/2)-1
    \end{align*}
    which equals \emph{Blomqvist's beta} $\beta$; see \cite{blomqvist1950measure}.
    Note that Blomqvist's beta is also known as \emph{median correlation coefficient}.
  \item If $G$ is the distribution function $\Phi$ of the standard normal distribution $\N(0,1)$, then
    \begin{align*}
      \kappa_{G}(X_1,X_2) = \rho\bigl(\Phi^{-1}(F_1(X_1)),\Phi^{-1}(F_2(X_2))\bigr)
    \end{align*}
    which equals \emph{van der Waerden's coefficient} $\zeta$; see, for example, \cite{sidak1999theory}.
    It is also known as \emph{normal score correlation}.
  \end{enumerate}
\end{example}

The first question in the introduction is natural: For which distributions
$G_1,G_2$ does the $G_1,G_2$-transformed correlation $\kappa_{G_1,G_2}$ lead to
a measure of concordance in the sense of \cite{scarsini1984}? Before answering
it, consider the following example in the spirit of
\cite{embrechtsmcneilstraumann2002}; another example of this type is the
correlation bounds of Bernoulli random variables; see Example~\ref{ex:bernoulli:G}. Both examples show that $G_1$ and $G_2$ cannot
be chosen arbitrarily.

\begin{example}[Log-normal $G_1,G_2$-functions]\label{ex:lognormal:G}
  For $j=1,2$, let $\sigma_j>0$ and $G_j$ be the distribution function of the log-normal distribution $\LN(0,\sigma_j)$.
  Since $\kappa_{G_1,G_2}$ is the correlation coefficient of the
  random vector $(G_1^{-1}(U_1),G_2^{-1}(U_2))$ with
  $(U_1,U_2)=(F_1(X_1),F_2(X_2))$, its minimal and maximal values are attained when
  $(X_1,X_2)$ has copula $C=W$ and $C=M$, respectively, where $W(u_1,u_2)=\max\{u_1+u_2-1,0\}$ is the
  countermonotone and $M(u_1,u_2)=\min\{u_1,u_2\}$ is the comonotone copula. For different pairs
  of $(\sigma_1,\sigma_2)$, the minimal and maximal $(G_1,G_2)$-transformed rank correlation coefficients are shown in
  Figure~\ref{Figure: min and max correlations of Lognormal}
  as correlation coefficients of $\LN(0,\sigma_1)$ and $\LN(0,\sigma_2)$.
  \begin{figure}[htbp]
    \begin{minipage}{0.5\hsize}
      \begin{center}
        \includegraphics[width=70mm]{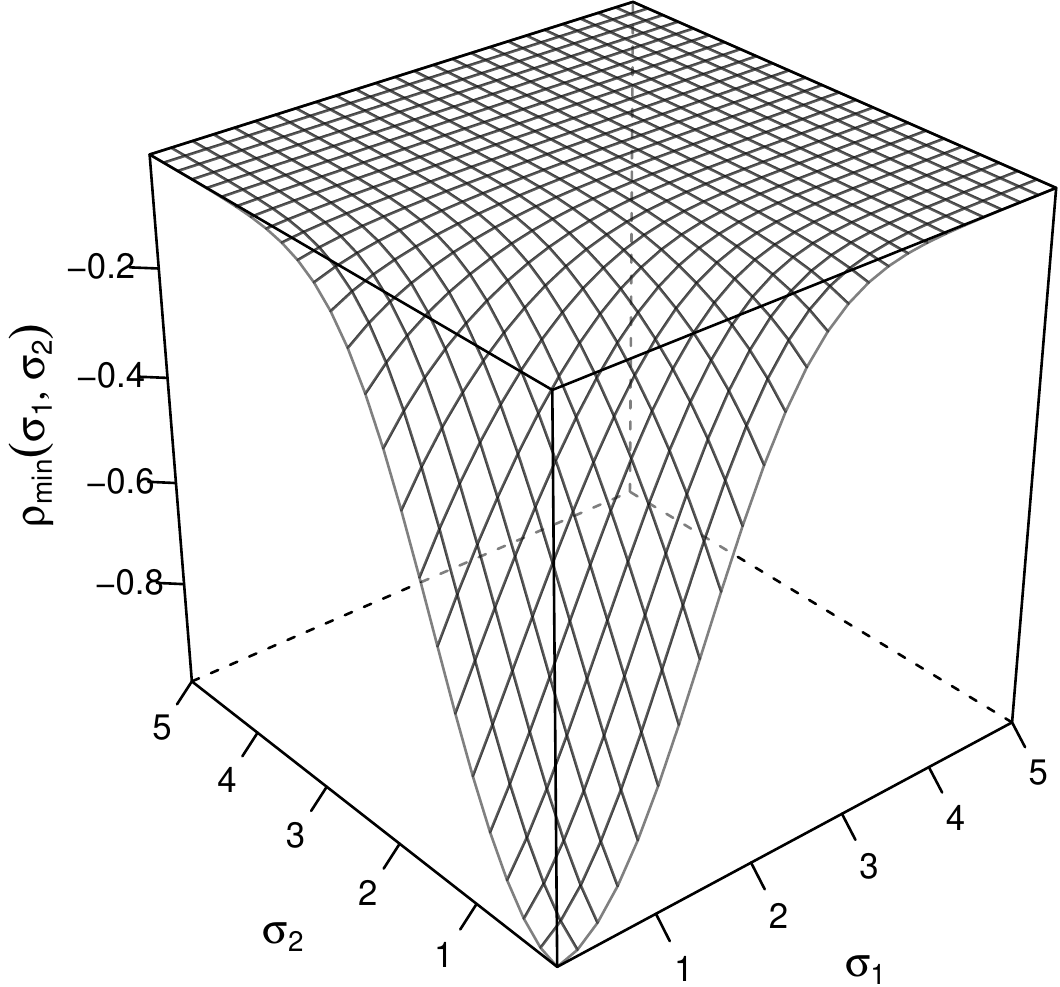}
      \end{center}
    \end{minipage}
    \begin{minipage}{0.5\hsize}
      \begin{center}
        \includegraphics[width=70mm]{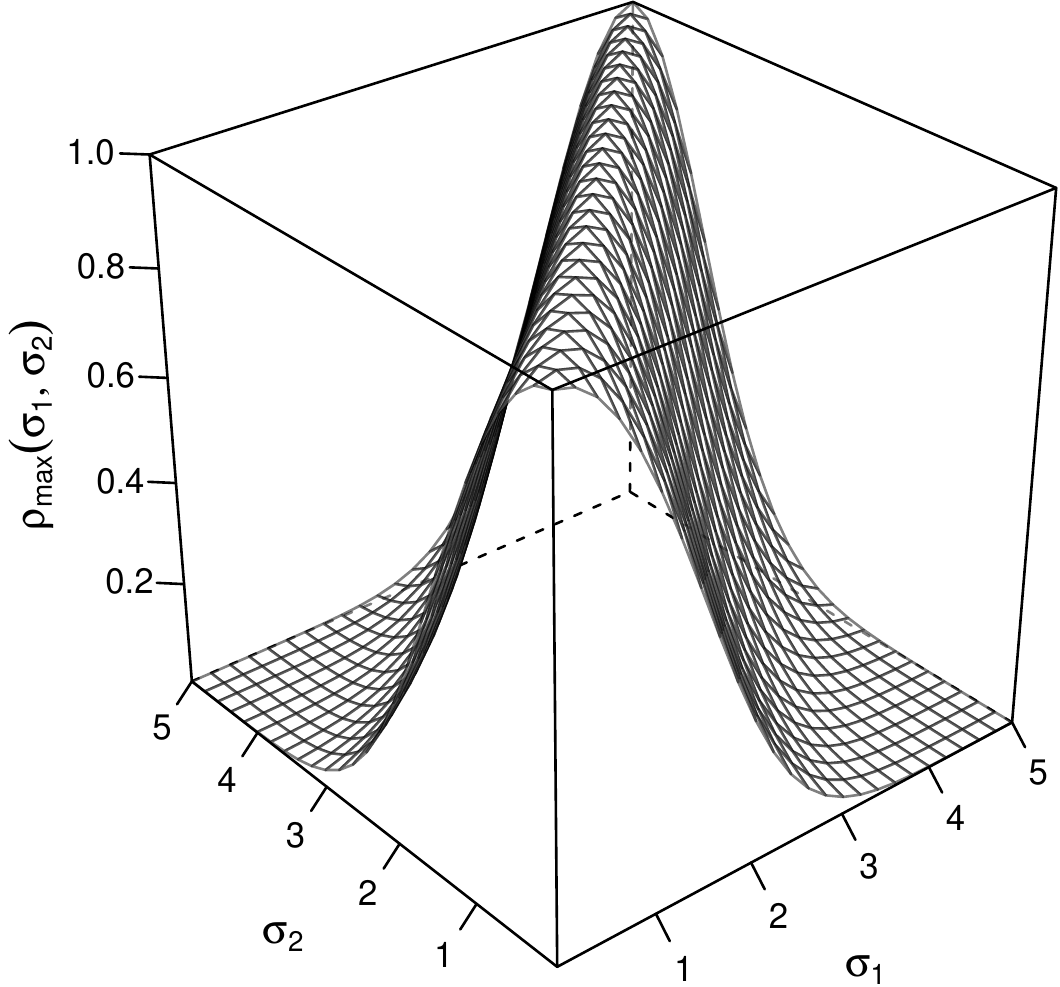}
      \end{center}
    \end{minipage}
    \caption{Minimal (left) and maximal (right) correlations attained by the
      $(G_1,G_2)$-transformed rank correlation coefficient $\kappa_{G_1,G_2}$
      where $G_j$ is the distribution function of $\LN(0,\sigma_j)$, $j=1,2$.}
    \label{Figure: min and max correlations of Lognormal}
  \end{figure}
  The left-hand side of this figure shows that $\kappa_{G_1,G_2}=-1$ is not
  attained for any $\sigma_1,\sigma_2>0$ and the right-hand side shows that
  $\kappa_{G_1,G_2}=1$ is not attained unless $\sigma_1=\sigma_2$.
  Consequently, if $G_1,G_2$ are taken to be log-normal distribution functions,
  $\kappa_{G_1,G_2}$ cannot be a measure of concordance since the range axiom is
  violated; see \cite{scarsini1984}.
\end{example}

The main result of this section is the following, which provides necessary and
sufficient conditions for a transformed rank correlation coefficient to be a
measure of concordance in the sense of \cite{scarsini1984}. Recall
  that two distributions are \emph{of the same type} if one is a location-scale
  transform of the other.
\begin{theorem}[Necessary and sufficient conditions for transformed rank correlations to be measures of concordance]\label{thm:G1:G2}
  Let $G_1,G_2$ be distribution functions. The $(G_1,G_2)$-transformed rank
  correlation coefficient $\kappa_{G_1,G_2}$ in \eqref{transformed:cor} is a
  measure of concordance if and only if both $G_1$ and $G_2$ are of the same
    type as some non-degenerate symmetric distribution $G$ with finite second
  moment.
\end{theorem}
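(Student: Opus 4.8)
The plan is to prove the two implications separately, in each case first using the identity $\kappa_{G_1,G_2}=\kappa_{G}$ (for a suitable $G$) to reduce to the one-function case. Since $\kappa_{G_1,G_2}(C)=\rho(G_1^\i(U_1),G_2^\i(U_2))$ for $(U_1,U_2)\sim C$ and $\rho$ is invariant under increasing affine transformations of its arguments, whenever $G_1^\i=a_1G^\i+b_1$ and $G_2^\i=a_2G^\i+b_2$ with $a_1,a_2>0$ one has $\kappa_{G_1,G_2}=\kappa_G$, where $\kappa_G(C)=\rho(G^\i(U_1),G^\i(U_2))$. So everything reduces to analyzing $\kappa_G$ for $G$ non-degenerate, symmetric about $0$ (after centering) and with $\sigma^2:=\Var Y\in(0,\infty)$ for $Y\sim G$; note that then $\kappa_G(C)=\sigma^{-2}\,\E_C[G^\i(U_1)G^\i(U_2)]$, and recall the standard fact $G^\i(1-U)=-G^\i(U)$ almost surely for such $G$.

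For the ``only if'' direction, suppose $\kappa_{G_1,G_2}$ is a measure of concordance. First, since it must be a well-defined real number for every continuous pair, $\rho(G_1^\i(U),G_2^\i(U))$ has to exist, which forces $G_j^\i(U)$ (with $U\sim\U(0,1)$) to have finite and strictly positive variance; hence each $G_j$ is non-degenerate with finite second moment. Applying the range axiom to the comonotone pair $(X,X)$ gives $\rho(G_1^\i(U),G_2^\i(U))=1$; the equality case of Cauchy--Schwarz (with positive correlation) yields $G_1^\i=\lambda G_2^\i+c$ almost everywhere for some $\lambda>0$, so $G_1$ and $G_2$ are of the same type and $\kappa_{G_1,G_2}=\kappa_{G_2}$. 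Then the change-of-sign axiom gives $\kappa_{G_2}(-X,X)=-\kappa_{G_2}(X,X)=-1$; as $(-X,X)$ has the countermonotone copula and $F_{-X}(-X)=1-F_X(X)$, this reads $\rho(G_2^\i(1-U),G_2^\i(U))=-1$, hence $G_2^\i(1-U)=-\mu\,G_2^\i(U)+\nu$ almost surely for some $\mu>0$. Read as a distributional identity $-\mu Y+\nu\deq Y$ for $Y\sim G_2$, comparison of variances (finite and positive) forces $\mu=1$, and then $-(Y-\E Y)\deq Y-\E Y$; thus $Y-\E Y$ is symmetric. Therefore $G_2$, and hence also $G_1$, is of the same type as the non-degenerate symmetric distribution of $Y-\E Y$, which has finite second moment.

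For the ``if'' direction, write $G_j^\i=a_jG^\i+b_j$ with $a_j>0$ (possible because $G$ is symmetric about $0$, so $aY\deq|a|Y$); by the reduction above it suffices to check Scarsini's axioms for $\kappa_G$ with $G$ non-degenerate, symmetric about $0$, with finite second moment. Well-definedness follows from $0<\sigma^2<\infty$ and symmetry of $\kappa_G$ from that of $\rho$; if $X_1\indep X_2$ then $G^\i(U_1)\indep G^\i(U_2)$, so $\kappa_G=0$. Using $G^\i(1-U)=-G^\i(U)$ a.s., the comonotone and countermonotone copulas give $\kappa_G=1$ and $\kappa_G=-1$, and $\kappa_G(-X_1,X_2)=\rho(-G^\i(U_1),G^\i(U_2))=-\kappa_G(X_1,X_2)$, settling the range and change-of-sign axioms. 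For coherence, the map $(u,v)\mapsto G^\i(u)G^\i(v)$ is supermodular on $(0,1)^2$ because $G^\i$ is nondecreasing (the rectangle inequality collapses to $(G^\i(v_1)-G^\i(u_1))(G^\i(u_2)-G^\i(v_2))\ge0$), so $C_1\preceq C_2$ implies $\E_{C_1}[G^\i(U_1)G^\i(U_2)]\le\E_{C_2}[G^\i(U_1)G^\i(U_2)]$ by the supermodular characterization of the concordance order already used in the proof of Proposition~\ref{prop:monotonicity:g}.

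I expect the continuity axiom to be the main obstacle: if copulas $C_n\to C$ pointwise, then $\kappa_G(C_n)\to\kappa_G(C)$. I would prove this by truncation. For $M>0$ let $G_M^\i:=\max(-M,\min(M,G^\i))$, which is bounded and nondecreasing; then $C\mapsto\E_C[G_M^\i(U_1)G_M^\i(U_2)]$ is continuous, since pointwise convergence of copulas gives weak convergence of $(U_1,U_2)$, the bounded function $G_M^\i$ has only countably many discontinuities (null under the uniform limiting margins), and one may pass to the limit under the bounded integrand. On the other hand, Cauchy--Schwarz gives $\bigl|\E_C[G^\i(U_1)G^\i(U_2)]-\E_C[G_M^\i(U_1)G_M^\i(U_2)]\bigr|\le 2\,\|G^\i(U)-G_M^\i(U)\|_{L^2}\,\|G^\i(U)\|_{L^2}$, which tends to $0$ as $M\to\infty$ \emph{uniformly} in $C$ precisely because $G^\i(U)\in L^2$ --- this is exactly where the finite second moment is needed. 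Hence $C\mapsto\E_C[G^\i(U_1)G^\i(U_2)]$, being a uniform limit of continuous functionals, is continuous, and dividing by the constant $\sigma^2$ gives continuity of $\kappa_G$. (Alternatively, H\"offding's identity $\E_C[G^\i(U_1)G^\i(U_2)]-(\E Y)^2=\int_0^1\!\!\int_0^1(C(u,v)-uv)\,\rd G^\i(u)\,\rd G^\i(v)$ together with dominated convergence works, the dominating function $\sqrt{\min(u,1-u)\min(v,1-v)}$ being integrable against $\rd G^\i\otimes\rd G^\i$ exactly under the second-moment assumption.) The crux throughout is the interplay of the symmetry of $G$ (driving the range and change-of-sign axioms) with its finite second moment (needed both for the $L^2$-control in the continuity axiom and to pin the scale factor to $1$ in the necessity argument), while the ``same type'' reduction $\kappa_{G_1,G_2}=\kappa_G$ is what makes both directions manageable.
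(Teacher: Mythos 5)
Your proof is correct and follows the same overall skeleton as the paper's: reduce to the one-function case $\kappa_G$ by affine invariance of $\rho$, extract non-degeneracy and finite second moments from well-definedness of the correlation, derive symmetry of $G$ from the two conditions $\rho=1$ at the comonotone copula and $\rho=-1$ at the countermonotone copula by matching variances and means, and then verify Scarsini's axioms one by one for sufficiency. Two points of genuine divergence are worth recording. First, in the necessity step you obtain the countermonotone condition $\kappa(W)=-1$ from the change-of-sign axiom applied to $(-X,X)$ rather than reading it off the range axiom directly as the paper does; both are legitimate, and your ordering (same type first from $M$ via the Cauchy--Schwarz equality case, then symmetry from $W$) is a clean equivalent of the paper's simultaneous treatment of $Y_1$ and $Y_2$. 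Second, and more substantively, your treatment of the continuity axiom by $L^2$-truncation is tighter than the paper's: the paper invokes bounded convergence for the Hoeffding integral over $\IR^2$, but uniform boundedness of the integrand is not by itself a license to pass to the limit on a space of infinite Lebesgue measure, and the natural dominating function $\sqrt{G(y_1)(1-G(y_1))}\,\sqrt{G(y_2)(1-G(y_2))}$ need not be integrable under a bare second-moment hypothesis (take $1-G(y)\sim y^{-2}(\log y)^{-2}$). Your main argument --- uniform-in-$C$ control of the truncation error via Cauchy--Schwarz plus continuity of each bounded truncated functional under weak convergence --- closes this using exactly the finite-second-moment assumption. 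Be aware, though, that your parenthetical alternative fails for the same reason: integrability of $\sqrt{\min(u,1-u)\min(v,1-v)}$ against $\rd G^{-1}\otimes \rd G^{-1}$ is strictly stronger than a finite second moment, so the truncation route is the one to keep. (Also, the rectangle inequality in your coherence step has a sign slip --- the correct factorization is $(G^{-1}(v_1)-G^{-1}(u_1))(G^{-1}(v_2)-G^{-1}(u_2))\ge 0$ --- but the supermodularity claim itself is right and is equivalent to the paper's appeal to Hoeffding's lemma.)
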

\begin{proof}
  Let $(X_1,X_2)\sim H$ with copula $C$ and continuous margins $F_1,F_2$. Then
  $(U_1,U_2) = (F_1(X_1),F_2(X_2)) \sim C$ so that
  $(Y_1,Y_2)=(G_1^\i(U_1),G_2^\i(U_2))$ has copula $C$ and marginal distribution
  functions $G_1,G_2$. The transformed rank correlation coefficient
  $\kappa_{G_1,G_2}(X_1,X_2)$ in \eqref{transformed:cor} can then be written as
  $\kappa_{G_1,G_2}(X_1,X_2)=\rho(Y_1,Y_2)$.

  Consider necessity. %
  If either of $G_1$ and $G_2$ is degenerate, then $\rho(Y_1,Y_2)$ is not
  well-defined, %
  which violates the domain axiom of a measure of concordance.
  Therefore, $G_1$ and $G_2$ must be non-degenerate. Next, if either of
  $\Var(Y_1)$ and $\Var(Y_2)$ is infinite, then $\rho(Y_1,Y_2)$ is not defined,
  which also violates the domain axiom. Thus, $G_1$ and $G_2$ must have finite
  second moments.
  For $j=1,2$, let $\mu_j = \E(Y_j)$ and $\sigma_j^2 = \Var(Y_j)<\infty$.  It is
  known that %
  $\rho(Y_1,Y_2)=-1$ if and only if $Y_2\deq-aY_1+b$ for some $a,b\in\IR$ with
  $a>0$ and $\rho(Y_1,Y_2)=1$ if and only if $Y_2\deq cY_1+d$ for some
  $c,d\in\IR$ with $c>0$.  Note that both distributional equalities must hold
  simultaneously so that $\kappa_{G_1,G_2}(X_1,X_2)=1$ when $(X_1,X_2)$ is
  comonotone and $\kappa_{G_1,G_2}(X_1,X_2)=-1$ when $(X_1,X_2)$ is
  countermonotone.  Since $\sigma_2^2=a^2\sigma_1^2=c^2\sigma_1^2$, $a,c>0$ and
  $\sigma_1\neq 0$, we have $a=c$. Furthermore, by taking expectations,
  $\mu_2=-c\mu_1+b$ and $\mu_2=c\mu_1+d$, which imply that $\mu_1=(b-d)/(2c)$
  and $\mu_2=(b+d)/2$.  Since $Y_2-b\deq -c Y_1 \deq d-Y_2$, adding constant
  $(b-d)/2$ to both hand sides yield $Y_2 - \mu_2 \deq \mu_2 - Y_2$.  This
  implies that $Y_2$ is symmetric about its mean $\mu_2$.  Similarly, $Y_1$ is
  shown to be symmetric about its mean $\mu_1$.  Finally, it follows from
  $Y_2\deq cY_1+d$ that $G_2(x)=G_1((x-d)/c)$ and thus
  $G_2^\i(u)=d+c\ G_1^\i(u)$, which concludes the proof of necessity.

Now consider sufficiency. %
If $G_1$ and $G_2$ are of the same type with some distribution $G$, then
$\kappa_{G_1,G_2}(C)=\kappa_{G,G}(C)=\kappa_{G}(C)$ for any copula $C$ since
correlation coefficient is invariant under positive linear transform; see
\cite{embrechtsmcneilstraumann2002}. Therefore, it suffices to verify the seven
axioms of a measure of concordance in \cite{scarsini1984} for $\kappa_{G}$ with
$G$ being a non-degenerate symmetric distribution with finite second moment.
  \begin{enumerate}
  \item \emph{Domain:} Since $G$ is non-degenerated with a finite second moment,
    $\rho(Y_1,Y_2)$ is well-defined for all continuously distributed $X_1,X_2$.
  \item \emph{Symmetry:} To show $\kappa_G(X_1,X_2)=\kappa_G(X_2,X_1)$, it suffices to show
    \begin{align*}
      \E(G^\i(U_1)G^\i(U_2))=\E(G^\i(U_2)G^\i(U_1))
    \end{align*}
    for any $C$ and $(U_1,U_2)\sim C$, but this is obvious by exchangeability of product.
  \item \emph{Coherence:} Let $C_1,C_2$ be copulas such that $C_1\preceq C_2$,
    that is, $C_1(u_1,u_2)\le C_2(u_1,u_2)$ for all $u_1,u_2\in[0,1]$.
    Then $\kappa_{G}(C_1)\le\kappa_{G}(C_2)$ follows immediately from
    the Hoeffding's lemma; see~\cite[Lemma~7.27]{mcneilfreyembrechts2015}.
  \item \emph{Range:} Since $\kappa_G(X_1,X_2)=\rho(Y_1,Y_2)$, we have $-1 \leq \kappa_G(X_1,X_2)\leq 1$. Moreover, since $G$ is symmetric, we have $Y_1-\E[Y_1]\deq \E[Y_2]-Y_2$.
  Together with $Y_1\deq Y_2$, the bounds $\kappa_G(X_1,X_2)=-1$ and $\kappa_G(X_1,X_2)=1$ are attainable when $(X_1,X_2)$ are countermonotone and comonotone, respectively.
  \item \emph{Independence:} When $X_1,X_2$ are independent, so are $Y_1,Y_2$ and thus $\kappa_{G}(X_1,X_2)=\rho(Y_1,Y_2)=0$.
  \item \emph{Change of sign:} Let $F_{-X_2}$ be the distribution of $-X_2$.
    Then it holds that $F_{-X_2}(-x_2)=\P(X_2 > x_2) = 1-F_2(x_2)$ and thus
    $F_{-X_2}(-X_2)=1-F_2(X_2)=1-U_2$.  Symmetry of $G$ implies that
    $G(y)=1-G(2\mu_2-y)$ for $y\in \IR$ and thus $G^\i(1-p) = 2\mu_2-G^\i(p)$
    for $p\in (0,1)$. Therefore,
    \begin{align*}
      \kappa(X_1,-X_2) &= \rho\bigl(G^\i(F_{X_1}(X_1)),G^\i(F_{-X_2}(-X_2))\bigr) = \rho(G^\i(U_1),G^\i(1-U_2))\\
      &= \rho(G^\i(U_1),2\mu_2-G^\i(U_2)) =\rho(G^\i(U_1),-G^\i(U_2)) \\
    &= -\rho(G_{1}^\i(U_1),G_{2}^\i(U_2))= -\kappa(X_1,X_2)
    \end{align*}
    by invariance and change of sign properties of correlation coefficient.
  \item \emph{Continuity:} Let $(X_{n1},X_{n2})\sim H_n$, $n\in\IN$, and
    $(X_1,X_2)\sim H$ all have continuous margins with $H_n$ converging
    pointwise to $H$ as $n\rightarrow \infty$.  Let $C_n$ denote the copula of
    $H_n$, $n\in\IN$, and $C$ the one of $H$.  Then
    $\lim_{n\rightarrow \infty}C_n = C$ pointwise.  Since
    $\kappa(X_{n1},X_{n2})$ and $\kappa(X_{1},X_{2})$ are correlation
    coefficients of $(Y_{n1},Y_{n2})$ and $(Y_1,Y_2)$ having the same marginal
    distribution $G$ and copulas $C_n$ and $C$, respectively, Hoeffding's lemma
    yields that
    \begin{align}\label{eq:expression:D4invariantMOC}
      \nonumber \lim_{n\rightarrow \infty}\kappa(X_{n1},X_{n2})&=\lim_{n\rightarrow \infty}\frac{1}{\sigma_1\sigma_2}\int_{\IR^2}(C_n(G(y_1),G(y_2))-G(y_1)G(y_2))\,\rd \lambda_2(y_1,y_2)\\
                                                      &=\frac{1}{\sigma_1\sigma_2}\int_{\IR^2}(C(G(y_1),G(y_2))-G(y_1)G(y_2))\,\rd \lambda_2(y_1,y_2)=\kappa(X_{1},X_{2}),
    \end{align}
    for the Lebesgue measure $\lambda_2$ on $\IR^2$, where the second equality is justified by the bounded convergence theorem
    since $C_n(G(y_1),G(y_2))-G(y_1)G(y_2)$ and $C(G(y_1),G(y_2))-G(y_1)G(y_2)$
    are all uniformly bounded.
  \end{enumerate}
\end{proof}

As seen in the proof of Theorem~\ref{thm:G1:G2}, if $\kappa_{G_1,G_2}$ is a
measure of concordance, then it must be written by $\kappa_{G}$ for some
distribution $G$ which is of the same type with $G_1$ and $G_2$.  In what
follows, we thus focus on $G$-transformed rank correlation coefficients for
which we assume that $G_1 = G_2$.

\begin{remark}[Connection to $D_4$-invariant measures of concordance]\label{rem:D4inv}
  From \eqref{eq:expression:D4invariantMOC} it turns out that
  $(G_1,G_2)$-transformed rank correlations $\kappa_{G_1,G_2}$ form a subclass
  of $D_4$-invariant measures of concordance as proposed by
  \cite{edwards2005measures}. A measure $\nu$ on $(0,1)^2$ is called
  \emph{$D_4$-invariant} if it is invariant under transpositions $(x,y)\mapsto (y,x)$
  and partial reflections $(x,y)\mapsto (1-x,y)$. For such
  measures $\nu$, \cite{edwards2005measures} show that the functional
  \begin{align}\label{eq:D4invariant:MOC}
    C \mapsto \frac{\int_{(0,1)^2}(C-\Pi)\,\rd \nu}{\int_{(0,1)^2}(M-\Pi)\,\rd \nu}
  \end{align}
  is a measure of concordance, where $M$ is the comonotonic copula and $\Pi$ is the independence copula. When $G_1$ and $G_2$ are symmetric, the
  pushforward Lebesgue measure $\lambda_{G_1,G_2}$ is $D_4$-invariant and the
  corresponding measure \eqref{eq:D4invariant:MOC} yields our
  $(G_1,G_2)$-transformed rank correlation \eqref{transformed:cor}.
  Consequently, the sufficiency part of the proof of Theorem~\ref{thm:G1:G2}
  follows from \cite[Theorem 0.6]{edwards2005measures}.
\end{remark}

According to Theorem~\ref{thm:G1:G2}, we call a distribution function $G$
\emph{concordance inducing} if it is non-degenerate, symmetric and has
finite second moment. Examples of such distributions include normal, Student's
$t$ with degrees of freedom $\nu>2$, continuous and discrete uniform
distributions, Laplace and logistic distributions.  The following example shows
that Bernoulli distribution $\Bern(p)$ is concordance inducing if and only if
they are symmetric, that is, $p=1/2$.

\begin{example}[Bernoulli $G$-function]\label{ex:bernoulli:G}
  For $j=1,2$, let $p_j \in [0,1]$ and $G_j$ be the distribution of
  $Y_j \sim \Bern(p_j)$.  As discussed in Example~\ref{ex:lognormal:G},
  $\kappa_{G_1,G_2}(X_1,X_2)=\rho(Y_1,Y_2)$ and its minimal and maximal values
  are attained when $C=W$ and $C=M$, respectively. Figure~\ref{Figure: min and
    max correlations of Bernoulli} illustrates the minimal (left-hand side) and
  maximal (right-hand side) $(G_1,G_2)$-transformed rank correlation
  coefficients as correlations of $\Bern(p_1)$ and $\Bern(p_2)$ for different
  pairs of $(p_1,p_2)$.
  \begin{figure}[htbp]
    \begin{minipage}{0.5\hsize}
      \begin{center}
        \includegraphics[width=70mm]{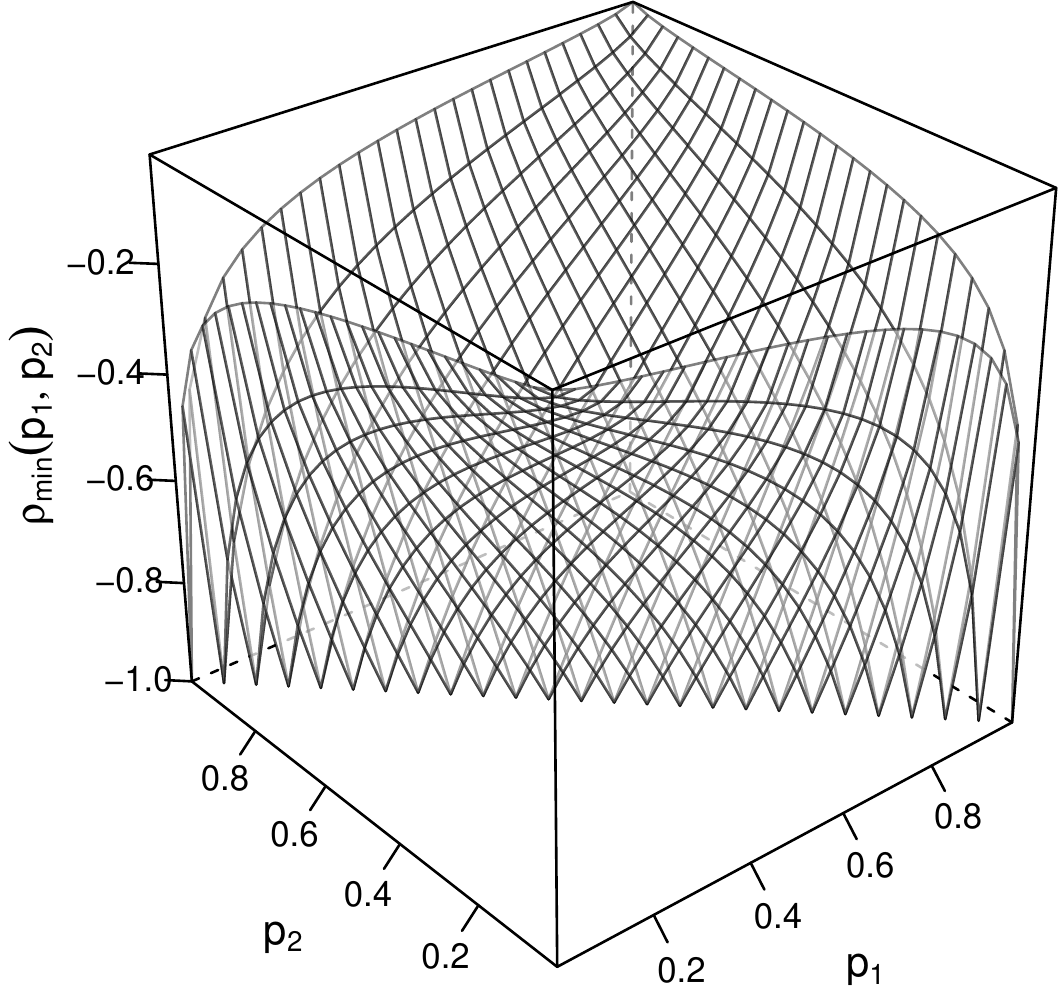}
      \end{center}
    \end{minipage}
    \begin{minipage}{0.5\hsize}
      \begin{center}
        \includegraphics[width=70mm]{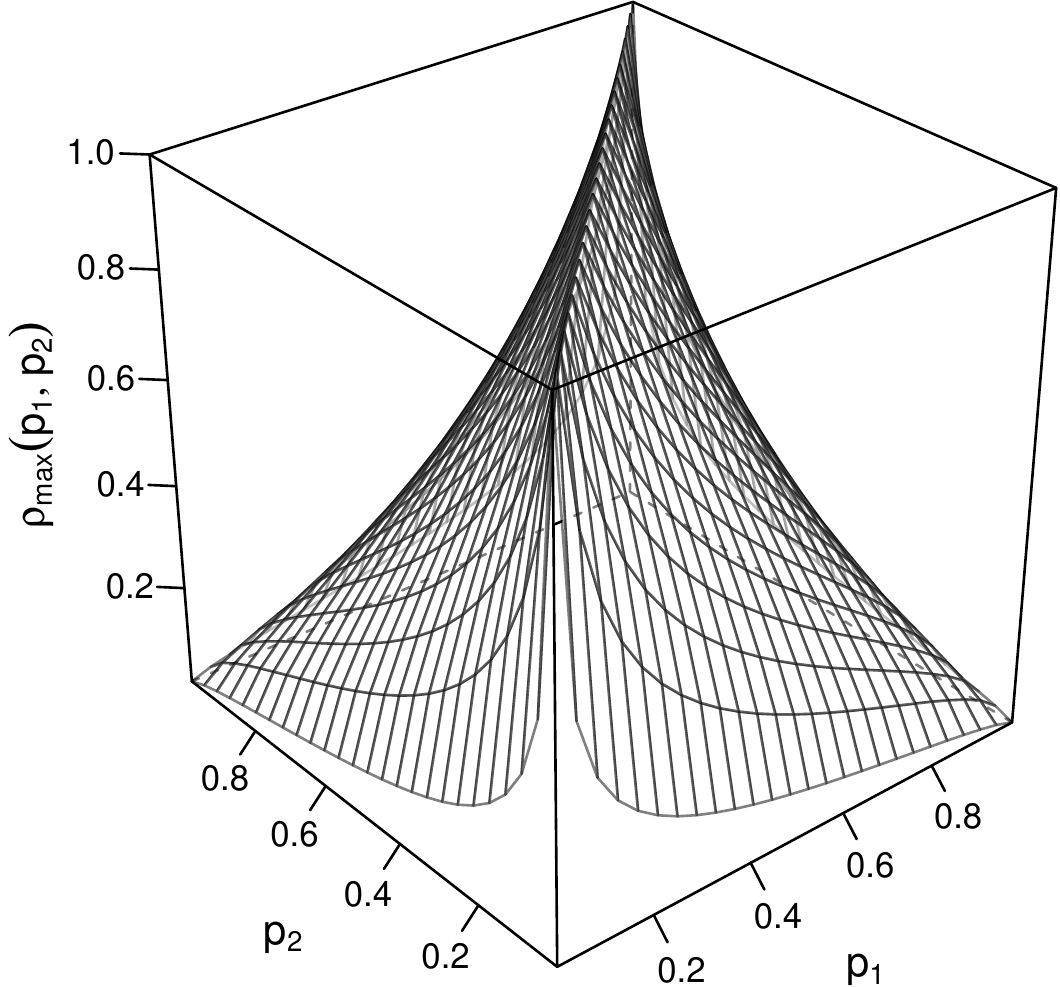}
      \end{center}
    \end{minipage}
    \caption{Minimal (left) and maximal (right) correlations attained by the
      $(G_1,G_2)$-transformed rank correlation coefficient $\kappa_{G_1,G_2}$
      where $G_j$ is the distribution function of $\B(1,p_j)$, $j=1,2$.}
    \label{Figure: min and max correlations of Bernoulli}
  \end{figure}
  The left-hand side of the figure indicates that $\kappa_{G_1,G_2}=-1$ if
  $p_1 = 1-p_2$ and this is the only case when $Y_1$ and $-Y_2$ are of the same type. The right-hand side shows that $\kappa_{G_1,G_2}=1$ if $p_1 = p_2$,
  and this is the only case when $Y_1$ and $Y_2$ have the same distribution.
  Since $\kappa_{G_1,G_2}$ must attain $-1$ and $1$ when $C=W$ and $C=M$,
  respectively, $\kappa_{G_1,G_2}$ is a measure of concordance only when
  $p_1=p_2=1/2$.  As a consequence, $\Bern(p)$ is concordance inducing if and
  only if $p=1/2$.
\end{example}

Note that due to the invariance of the correlation coefficient under strictly
increasing linear transforms, $\kappa_G$ is invariant under location-scale
transforms of $Y\sim G$. Therefore, if $G$ has bounded support, it may be
beneficial to standardize it so that its support is $[0,1]$.  Similarly, if $G$
is supported on $\IR$, one can still standardize $G$ to have zero mean and unit
variance without changing $\kappa_{G}$. Due to this property, one can see that
the quadrant correlation of \cite{mosteller2006some} studied in
\cite{raymaekersrousseuw2018robustcorrelation} coincides with Blomqvist's beta.

Uniqueness of $G$-function up to location-scale
transformations follows direcltly from \cite[Lemma~2.4]{edwards2004measures} or
\cite[Lemma~0.4]{edwards2005measures}.

\begin{proposition}[Uniqueness of $G$-functions]\label{prop:uniqueness:G}
Let $G$ and $G'$ be two continuous concordance-inducing functions.
If $\kappa_G(C)=\kappa_{G'}(C)$ for all 2-copulas, then $G$ and $G'$ are of the same type.
\end{proposition}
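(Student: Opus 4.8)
The plan is to exploit the integral representation of $\kappa_G$ already derived in \eqref{eq:expression:D4invariantMOC}: for a concordance-inducing $G$ with mean $\mu$ and variance $\sigma^2$, and any 2-copula $C$,
\begin{align*}
  \kappa_G(C) = \frac{1}{\sigma^2}\int_{\IR^2}\bigl(C(G(y_1),G(y_2))-G(y_1)G(y_2)\bigr)\,\rd\lambda_2(y_1,y_2).
\end{align*}
Changing variables $u_i = G(y_i)$ (valid since $G$ is continuous, hence $G$ maps onto a subinterval of $[0,1]$ and the pushforward of Lebesgue measure under $G^\i$ has ``density'' $\rd G^\i(u)$) rewrites this as a weighted integral of $C(u_1,u_2)-u_1u_2$ against the product measure $\rd G^\i(u_1)\,\rd G^\i(u_2)$, normalized by $\sigma^2$. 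So $\kappa_G$ and $\kappa_{G'}$ agreeing on all copulas says that two such weighted integrals of $C-\Pi$ coincide for every $C$. The goal is to conclude that the two weighting measures on $(0,1)^2$ are equal up to a positive multiplicative constant, which forces $G^\i$ and $(G')^\i$ to differ by an affine map, i.e. $G$ and $G'$ are of the same type.

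The key steps, in order: (1) Recall (or cite \cite[Lemma~2.4]{edwards2004measures} / \cite[Lemma~0.4]{edwards2005measures}) that the map $C\mapsto \int_{(0,1)^2}(C-\Pi)\,\rd\nu$ determines the $D_4$-invariant part of a finite signed measure $\nu$ on $(0,1)^2$; since both our weighting measures are already symmetric and $D_4$-invariant (because $G,G'$ are symmetric, as in Remark~\ref{rem:D4inv}), equality of the functionals on all copulas gives proportionality of the product measures $\rd G^\i\otimes\rd G^\i$ and $\rd (G')^\i\otimes\rd (G')^\i$. (2) Deduce from proportionality of the product measures that the one-dimensional marginals are proportional, hence $\rd G^\i = c\,\rd (G')^\i$ on $(0,1)$ for some $c>0$ (the constant is positive because both are nonnegative nonzero measures arising from increasing functions). (3) Integrate: $G^\i(u) = c\,(G')^\i(u) + b$ for all $u\in(0,1)$ and some $b\in\IR$, which is precisely the statement that $G$ and $G'$ are location-scale transforms of one another, i.e. of the same type.

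Alternatively, if one wishes to avoid invoking the Edwards measure-theoretic lemma as a black box, one can argue more directly by plugging in a rich enough family of copulas. A convenient choice is the family of shuffle-type or checkerboard copulas supported near a single off-diagonal cell, as used in the proof of Proposition~\ref{prop:monotonicity:g}: testing $\kappa_G = \kappa_{G'}$ against $Q_N$ and $\tilde Q_N$ built on an arbitrary pair of dyadic cells isolates, in the limit $N\to\infty$ and using continuity of $G,G'$, the quantity $\bigl(G^\i(u')-G^\i(u)\bigr)\bigl(G^\i(v')-G^\i(v)\bigr)/\sigma^2$, and the analogous expression for $G'$. Equality of these rank-one products for all $u<u'$, $v<v'$ forces $\bigl(G^\i(u')-G^\i(u)\bigr)/\sigma = \pm\bigl((G')^\i(u')-(G')^\i(u)\bigr)/\sigma'$ with a global sign (the $+$ sign, since both $G^\i$ and $(G')^\i$ are increasing), and the global multiplicative constant then gives the affine relation as in step (3).

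The main obstacle is step (1)/(2): going from ``the functionals $C\mapsto\int(C-\Pi)\,\rd\nu$ agree on all copulas'' to ``the measures $\nu$ agree up to normalization.'' This is the substantive injectivity statement, and it is exactly what the cited Edwards lemmas provide; the care needed is to check that our weighting measures genuinely fall in the class those lemmas apply to — finite (because $\sigma^2<\infty$, so $\rd G^\i$ has finite total mass on $(0,1)$) and $D_4$-invariant (because $G,G'$ are symmetric). Everything after that — disintegrating the product measure to get marginals, and integrating the density identity to get the affine map — is routine.
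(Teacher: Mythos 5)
Your primary route is exactly the paper's: the paper proves this proposition by direct citation of \cite[Lemma~2.4]{edwards2004measures} and \cite[Lemma~0.4]{edwards2005measures}, identifying $\kappa_G$ with the $D_4$-invariant functional generated by the pushforward measure $\rd G^\i\otimes\rd G^\i$, which is precisely your steps (1)--(3). One slip in your hypothesis-checking: the claim that $\sigma^2<\infty$ makes $\rd G^\i$ a \emph{finite} measure on $(0,1)$ is false --- its total mass is $G^\i(1^-)-G^\i(0^+)$, the diameter of the support, which is infinite already for $G=\Phi$. What finite second moment actually gives, and what the Edwards admissibility condition requires, is $0<\int_{(0,1)^2}(M-\Pi)\,\rd\nu=\sigma^2<\infty$; so your conclusion stands, but for that reason rather than finiteness of $\nu$. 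Your alternative checkerboard argument is a genuine self-contained route the paper does not spell out: testing $\kappa_G=\kappa_{G'}$ against $Q_N,\tilde Q_N$ and letting $N\to\infty$ yields $(G^\i(u')-G^\i(u))(G^\i(v')-G^\i(v))/\sigma^2=((G')^\i(u')-(G')^\i(u))((G')^\i(v')-(G')^\i(v))/(\sigma')^2$ for all $u<u'$, $v<v'$; taking $u=v$, $u'=v'$ and using that both quantile functions are increasing forces $G^\i/\sigma-(G')^\i/\sigma'$ to be constant, i.e.\ the same-type conclusion, with no appeal to the measure-injectivity lemma. The only care needed there is that the limiting step requires continuity of $G^\i$ (not merely of $G$) at the test points; since $G^\i$ is monotone this fails on at most a countable set and the identity extends by left-continuity, so the argument goes through.
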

We end the section with a simple linear property of $\kappa_G$.

\begin{proposition}[Linearity of $\kappa_G$]\label{prop:linearity:kappaG}
  For $n \in \IN$, let $C_1,\dots,C_n$ be 2-copulas and
  $\alpha_1,\dots,\alpha_n$ be non-negative numbers such that
  $\alpha_1+\cdots+\alpha_n = 1$. Then
  \begin{align*}
    \kappa_G\biggl(\,\sum_{i=1}^{n}\alpha_i C_i \biggr)=\sum_{i=1}^{n}\alpha_i \kappa_G(C_i).
  \end{align*}
\end{proposition}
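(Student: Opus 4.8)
The key observation is that $\kappa_G$ depends on the copula $C$ only through the quantity $\int_{(0,1)^2} G^\i(u_1)G^\i(u_2)\,\rd C(u_1,u_2) = \E_C\bigl(G^\i(U_1)G^\i(U_2)\bigr)$, where $(U_1,U_2)\sim C$; the normalizing constants (the mean $\mu$ and variance $\sigma^2$ of $G$) do not depend on $C$ because every $(Y_1,Y_2)=(G^\i(U_1),G^\i(U_2))$ has the same margins $G$ regardless of $C$. Concretely, writing $\mu=\E_G(Y)$ and $\sigma^2=\Var_G(Y)$ (both finite and with $\sigma^2>0$ since $G$ is concordance inducing), we have
\begin{align*}
  \kappa_G(C)=\frac{1}{\sigma^2}\Bigl(\E_C\bigl(G^\i(U_1)G^\i(U_2)\bigr)-\mu^2\Bigr).
\end{align*}
So $\kappa_G$ is an affine functional of $C\mapsto\E_C\bigl(G^\i(U_1)G^\i(U_2)\bigr)$, and the claim reduces to the statement that this last map is linear (additive) under convex combinations of copulas.

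For that, first note that $\sum_{i=1}^n\alpha_i C_i$ is again a $2$-copula: it has uniform margins (each $C_i$ does and the $\alpha_i$ sum to one) and is $2$-increasing / grounded because these are preserved under nonnegative linear combinations. Next, for a bounded measurable $h$ on $(0,1)^2$, the integral $C\mapsto\int h\,\rd\mu_C$ against the induced measure $\mu_C$ is linear in $C$; this is immediate when $h$ is a finite sum of indicators of rectangles (where $\mu_C$ of a rectangle is given by the rectangle increment of $C$, which is manifestly additive in $C$), and extends to general bounded measurable $h$ by the standard monotone-class / approximation argument. The only subtlety is that $G^\i(u_1)G^\i(u_2)$ need not be bounded when $G$ has unbounded support; but it is $C$-integrable for every copula $C$ because, by the Cauchy–Schwarz / Fréchet–Hoeffding bound, $\E_C\bigl|G^\i(U_1)G^\i(U_2)\bigr|\le\sqrt{\E_G(Y^2)}\sqrt{\E_G(Y^2)}=\E_G(Y^2)<\infty$, uniformly over $C$. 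One therefore truncates $g_K=(G^\i\wedge K)\vee(-K)$, applies linearity on the bounded truncations, and passes to the limit $K\to\infty$ using dominated convergence with the uniform majorant just noted (here one uses that $\sum_i\alpha_i C_i$ and each $C_i$ all give the same margin $G$ to $Y_j$, so the dominating function is integrable under every copula involved).

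Assembling: by the truncation-and-limit argument, $\E_{\sum_i\alpha_i C_i}\bigl(G^\i(U_1)G^\i(U_2)\bigr)=\sum_i\alpha_i\,\E_{C_i}\bigl(G^\i(U_1)G^\i(U_2)\bigr)$; subtracting $\mu^2=\sum_i\alpha_i\mu^2$ and dividing by $\sigma^2$ gives $\kappa_G\bigl(\sum_i\alpha_i C_i\bigr)=\sum_i\alpha_i\kappa_G(C_i)$. The main (and really the only) obstacle is the integrability bookkeeping when $G$ has unbounded support — making sure the truncation limit is justified uniformly over all the copulas appearing in the convex combination; for $G$ with bounded support the statement is essentially a one-line consequence of linearity of the integral against rectangle increments.
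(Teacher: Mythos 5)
Your proof is correct, but it takes a different route from the paper's. The paper's entire argument is one line: since $\sum_i\alpha_iC_i$ is again a $2$-copula, apply Hoeffding's lemma, which represents $\Cov(Y_1,Y_2)$ as $\int_{\IR^2}\bigl(C(G(y_1),G(y_2))-G(y_1)G(y_2)\bigr)\,\rd\lambda_2(y_1,y_2)$; the integrand is visibly affine in $C$ pointwise, and since $\sum_i\alpha_i=1$ the subtracted term distributes across the mixture, so linearity of $\kappa_G$ falls out with no further measure theory (the integrability is packaged inside Hoeffding's lemma itself, which the paper has already invoked in the proof of Theorem~\ref{thm:G1:G2}). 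You instead work with the first-moment representation $\kappa_G(C)=\sigma^{-2}\bigl(\E_C(G^\i(U_1)G^\i(U_2))-\mu^2\bigr)$ and prove from scratch that the measure induced by a mixture copula is the mixture of the induced measures, then handle the possibly unbounded integrand $G^\i(u_1)G^\i(u_2)$ by truncation and dominated convergence, with the uniform majorant $\E_C|Y_1Y_2|\le\E_G(Y^2)$ supplied by Cauchy--Schwarz (not by the Fr\'echet--Hoeffding bounds, as your slash suggests --- a naming slip, not a gap). Your argument is more self-contained and makes the integrability bookkeeping explicit, which the paper leaves implicit; the paper's argument is shorter because Hoeffding's lemma converts the problem into one where linearity in $C$ is pointwise-obvious and no truncation is needed. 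Both are valid.
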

\begin{proof}
  As a mixture, $\sum_{i=1}^{n}\alpha_i C_i $ is a 2-copula from which
  the equation to prove is an immediate consequence of Hoeffding's lemma.
\end{proof}

\begin{remark}[Degree of $\kappa_G$]\label{remark:degree}
  For a general measure of concordance $\kappa$, \cite{edwards2009characterizations}
  defined the notion of a \emph{degree} as the maximum degree of the polynomial $t\mapsto \kappa(tC_1+(1-t)C_2)$, when it is the case, over any two copulas $C_1$ and $C_2$.
  Proposition~\ref{prop:linearity:kappaG} shows that $\kappa_G$ is a measure of
  concordance of degree one in this sense.
  Also note that the class of $G$-transformed rank
  correlation coefficients is a strict subclass of all measures of concordance
  of degree one since, for instance, Gini's coefficient is of degree one but
  cannot be represented as~\eqref{transformed:cor}; see
  Appendix~\ref{app:B}. Furthermore, there is no $G$-function that makes
  $\kappa_G$ Kendall's tau since the latter is a measure of concordance of
  degree two according to \cite{edwards2009characterizations}.  See Appendix~\ref{app:C}
  for a more detailed discussion on Kendall's tau.
\end{remark}

\section{Matrices of transformed rank correlation coefficients and their
  compatibility}\label{sec:compat}
Let $\bm{X}=(X_1,\dots,X_d)$ be a random vector with continuous margins
$F_1,\dots,F_d$ and copula $C$. We now consider matrices of (pairwise)
$G$-transformed rank correlation measures, that is, matrices
$P\in[-1,1]^{d\times d}$ with $(i,j)$th entry given by $\kappa_{G}(X_i,X_j)$. As
in Theorem~\ref{thm:G1:G2}, $G$ is set to be a distribution function of a
non-degenerate, symmetric distribution with finite second moment. We
call a given matrix $P \in [-1,1]^{d\times d}$ \emph{$\kappa_{G}$-compatible} if
there exists a $d$-random vector $\bm X$ such that $P=(\kappa_{G}(X_i,X_j))$. In
this section, we first study this \emph{compatibility problem} for the
transformed rank correlation coefficient \eqref{transformed:cor} in general and
then more specifically for Spearman's rho, Blomqvist's beta and van der
Waerden's coefficient. Note that an obvious necessary condition for a given
matrix $P$ to be $\kappa_{G}$-compatible is that it is a $[-1,1]^{d\times d}$
symmetric, positive semi-definite matrix with diagonal elements equal to $1$.

\subsection{A sufficient condition for compatibility of transformed rank correlation coefficients}
For a fixed concordance inducing function $G$, denote by $\mathcal{K}_{G}$ the set of all $\kappa_{G}$-compatible
matrices.  Since $\kappa_{G}(X_i,X_j)=\rho(Y_i,Y_j)$ with the notation as before,
$\mathcal{K}_{G}$ can be written as
\begin{align*}
  \mathcal{K}_{G}= \{\rho(\bm{Y})\,|\,\bm{Y} \in \mathcal F_{d}(G,\dots,G)\},
\end{align*}
where $\mathcal F_{d}(G,\dots,G)$ denotes the set of all $d$-dimensional random
vectors with all marginals equal to $G$. The following corollary follows
directly from Proposition~\ref{prop:linearity:kappaG}.
\begin{corollary}[Convexity of $\mathcal{K}_{G}$]\label{convexity of K_G set}
  $\mathcal{K}_{G}$ is a convex set for any concordance inducing function $G$.
\end{corollary}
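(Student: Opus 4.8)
The plan is to exploit the representation $\mathcal{K}_{G}= \{\rho(\bm{Y})\,|\,\bm{Y} \in \mathcal F_{d}(G,\dots,G)\}$ together with the linearity of $\kappa_G$ from Proposition~\ref{prop:linearity:kappaG}. Concretely, suppose $P^{(1)},P^{(2)} \in \mathcal{K}_{G}$ and let $\alpha \in [0,1]$; I must produce a random vector whose matrix of $G$-transformed rank correlations is $\alpha P^{(1)} + (1-\alpha)P^{(2)}$. By definition of $\mathcal{K}_G$ there are copulas $C^{(1)}, C^{(2)}$ of $d$-dimensional random vectors $\bm X^{(1)}, \bm X^{(2)}$ with $\kappa_G(X^{(k)}_i,X^{(k)}_j) = P^{(k)}_{ij}$ for $k=1,2$. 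The natural candidate is the mixture copula $C = \alpha C^{(1)} + (1-\alpha) C^{(2)}$, which is again a $d$-copula (a convex combination of copulas is a copula, as the defining properties — groundedness, uniform margins, $d$-increasingness — are all preserved under convex combination).

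Next I would compute the pairwise $G$-transformed rank correlations of any random vector $\bm X$ with copula $C$. Since $\kappa_G(X_i,X_j)$ depends only on the bivariate margin $C_{ij}$ of the copula $C$, and the bivariate margin of the mixture $C$ is exactly $\alpha C^{(1)}_{ij} + (1-\alpha) C^{(2)}_{ij}$, I can apply Proposition~\ref{prop:linearity:kappaG} (with $n=2$) to the $2$-copulas $C^{(1)}_{ij}$ and $C^{(2)}_{ij}$ to get
\begin{align*}
  \kappa_G\bigl((\alpha C^{(1)} + (1-\alpha)C^{(2)})_{ij}\bigr) = \alpha\,\kappa_G(C^{(1)}_{ij}) + (1-\alpha)\,\kappa_G(C^{(2)}_{ij}) = \alpha P^{(1)}_{ij} + (1-\alpha) P^{(2)}_{ij}.
\end{align*}
Hence the matrix of $G$-transformed rank correlations of $\bm X$ is $\alpha P^{(1)} + (1-\alpha) P^{(2)}$, which is therefore $\kappa_G$-compatible. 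This establishes convexity of $\mathcal{K}_G$.

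The only point requiring a small amount of care — and the closest thing to an obstacle — is the observation that the bivariate marginal of a convex combination of $d$-copulas is the corresponding convex combination of their bivariate marginals, and that $\kappa_G$ genuinely is a function of the bivariate copula alone (this is built into the notation $\kappa_G(C)$ introduced in Section~\ref{sec:cor:based:measures}). Both facts are essentially immediate, so the proof is short; I would simply state it as a direct consequence of Proposition~\ref{prop:linearity:kappaG} applied entrywise, noting that a mixture of $d$-copulas is a $d$-copula.
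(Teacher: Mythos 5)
Your proof is correct and is exactly the argument the paper intends: the paper states the corollary ``follows directly from Proposition~\ref{prop:linearity:kappaG}'' without spelling out details, and your mixture-copula construction with entrywise application of the linearity of $\kappa_G$ to the bivariate margins is precisely that direct consequence. No gaps.
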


Let
\begin{align*}
  \mathcal P^{\text{B}}_d(1/2)=\{\rho({\bm B}):{\bm B}=(B_1,\dots,B_d),\  B_j \sim \Bern(1/2),\ j=1,\dots,d\}
\end{align*}
be the set of all correlation matrices of $d$-dimensional random vectors whose
marginals are symmetric Bernoulli distributions. The following proposition provides a
sufficient condition for a given matrix to be $\kappa_{G}$-compatible.
\begin{proposition}[A sufficient condition for $\kappa_{G}$-compatibility]\label{prop:suff:cond:k:compatibility}
  For a concordance inducing function $G$, it holds that $\mathcal P^{\text{B}}_d(1/2) \subseteq \mathcal{K}_{G}$, that
  is, a given matrix $P \in[-1,1]^{d\times d}$ is $\kappa_{G}$-compatible if it
  is a correlation matrix of some random vector with $\Bern(1/2)$ margins.
\end{proposition}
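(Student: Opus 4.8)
The plan is to show that any correlation matrix $P$ arising from a random vector $\bm B = (B_1,\dots,B_d)$ with symmetric Bernoulli margins can be matched by a $\kappa_G$-compatible construction, for an arbitrary concordance-inducing $G$. The key observation is that a symmetric Bernoulli vector is built from a copula $C$ of $\bm B$ (more precisely, from the copula of the continuous random vector whose componentwise $\{>1/2\}$-indicators give $\bm B$), and that the pairwise correlation $\rho(B_i,B_j)$ equals $4C(1/2,1/2)-1 = \beta(C_{ij})$, the Blomqvist beta of the bivariate margin, exactly as computed in Example~\ref{ex:spearman:beta:waerden}\eqref{ex:spearman:beta:waerden:b}. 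So the task reduces to: given a copula $C$ whose bivariate Blomqvist-beta matrix is $P$, find a (possibly different) copula $\tilde C$ whose bivariate $\kappa_G$-matrix is also $P$.

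First I would make the reduction precise: fix $\bm B$ with $\rho(\bm B) = P$ and $B_j\sim\Bern(1/2)$. Realize $\bm B = (\I_{\{V_1>1/2\}},\dots,\I_{\{V_d>1/2\}})$ for some $(V_1,\dots,V_d)\sim C$ a $d$-copula (e.g.\ take $V_j$ uniform with $V_j \le 1/2 \iff B_j = 0$, coupled so as to reproduce the joint law of $\bm B$). Then each entry satisfies $P_{ij} = 4C_{ij}(1/2,1/2) - 1$, where $C_{ij}$ is the $(i,j)$ bivariate margin of $C$. Next I would build the target vector by using the \emph{checkerboard copula} $\tilde C$ associated to the $2\times 2$ grid $\{[0,1/2],[1/2,1]\}^d$ that matches $C$ on that grid — equivalently, the copula of a vector $\bm U$ uniform on $(0,1)^d$ with $(\I_{\{U_j>1/2\}})_j \deq \bm B$ and, conditionally on the vector of indicators, the $U_j$'s independent and uniform on their respective half-intervals. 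Set $Y_j = G^\i(U_j)$, so $\bm Y$ has all margins $G$ and $\kappa_G(X_i,X_j) = \rho(Y_i,Y_j)$ for $X_j = F_j^\i(U_j)$.

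The computational heart is then to verify $\rho(Y_i,Y_j) = P_{ij}$ for this $\tilde C$. Because $G$ is symmetric, write $\mu = \E[G^\i(U)]$ and note $G^\i(u) - \mu$ is ``antisymmetric'' about $u = 1/2$ in distribution; more usefully, conditionally on $\I_{\{U_j>1/2\}}$, the variable $Y_j - \mu$ has mean $\pm m$ with $m = \E[G^\i(U)\mid U>1/2] - \mu > 0$ by symmetry, and conditional variance not depending on the sign. A direct conditioning argument on the pair of indicators $(\I_{\{U_i>1/2\}}, \I_{\{U_j>1/2\}})$, using that $U_i, U_j$ are conditionally independent given those indicators, gives $\Cov(Y_i,Y_j) = m^2\,\Cov(B_i',B_j')$ where $B_k' = 2\I_{\{U_k>1/2\}}-1$, i.e.\ $\Cov(Y_i,Y_j) = m^2 \cdot 4\Cov(\I_{\{U_i>1/2\}},\I_{\{U_j>1/2\}}) = m^2 P_{ij}$; and similarly $\Var(Y_j) = m^2 + (\text{conditional variance})$, which I would check equals $m^2$ as well — in fact $\Var(Y_j) = \E[(Y_j-\mu)^2]$ and by symmetry $\E[(Y_j-\mu)^2 \mid U_j>1/2] = \E[(Y_j-\mu)^2 \mid U_j \le 1/2]$, so $\Var(Y_j) = \E[(Y_j-\mu)^2\mid U_j>1/2]$, which need not equal $m^2$ in general. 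So the cleaner route is: $\rho(Y_i,Y_j) = \Cov(Y_i,Y_j)/\sqrt{\Var(Y_i)\Var(Y_j)} = m^2 P_{ij}/\sigma^2$ where $\sigma^2 = \Var(Y_j)$ is the same for all $j$; and since $P$ is itself a correlation matrix of $\Bern(1/2)$ variables which is the $\kappa_G$-matrix when $G = \Bern(1/2)$ (so $m = 1/2$, $\sigma^2 = 1/4$, $m^2/\sigma^2 = 1$), consistency of the formula forces $m^2 = \sigma^2$ for \emph{every} symmetric $G$ when restricted to checkerboard copulas of this two-level type — because for such copulas $Y_j - \mu$ takes only the two values $\pm m$ (the conditional law of $U_j$ given $U_j > 1/2$ is uniform on $(1/2,1)$, but $G^\i$ maps it onto a nondegenerate distribution, so this is actually \emph{false} in general).

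Given that subtlety, the robust argument I would actually commit to is the following: since $\tilde C$ is a mixture over the $2^d$ cells with cell probabilities $\P(\bm B = \bm b)$, and on each cell $\bm U$ is a product of uniforms on half-intervals, $\tilde C$ is a convex combination of the $2^d$ ``corner'' copulas $M_{\bm b}$ (shifted/scaled independence copulas) — but more to the point, Proposition~\ref{prop:linearity:kappaG} gives $\kappa_G(\tilde C_{ij}) = \sum_{\bm b}\P(\bm B=\bm b)\,\kappa_G((M_{\bm b})_{ij})$. Each bivariate margin $(M_{\bm b})_{ij}$ is either the comonotone copula $M$ (when $b_i = b_j$) or the countermonotone copula $W$ (when $b_i \ne b_j$), since on a corner cell $U_i$ and $U_j$ lie in the same or in opposite half-intervals with $U_i, U_j$ each uniform there. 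Hence, by the Range axiom for $\kappa_G$ (proved in Theorem~\ref{thm:G1:G2}), $\kappa_G((M_{\bm b})_{ij}) = +1$ if $b_i = b_j$ and $-1$ if $b_i \ne b_j$, i.e.\ $\kappa_G((M_{\bm b})_{ij}) = (2b_i-1)(2b_j-1)$. Therefore $\kappa_G(\tilde C_{ij}) = \E[(2B_i-1)(2B_j-1)] = 4\Cov(B_i,B_j) = 4\cdot\frac14 P_{ij} = P_{ij}$, using $\E[2B_k-1] = 0$. This gives $P = (\kappa_G(X_i,X_j))$ for $\bm X = (F_1^\i(U_1),\dots,F_d^\i(U_d))$, i.e.\ $P \in \mathcal K_G$, completing the proof. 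The main obstacle is pinning down that the bivariate margins of the corner copulas are exactly $M$ or $W$ and invoking the Range axiom cleanly, rather than attempting a direct moment computation with $G^\i$, which (as the third paragraph shows) does not simplify as naively hoped.
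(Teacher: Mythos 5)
There is a genuine gap in your final ``robust argument,'' and it is precisely the subtlety you flagged (correctly) in your third paragraph resurfacing in disguise. You define the construction so that, \emph{conditionally on the vector of indicators, the $U_j$'s are independent} and uniform on their respective half-intervals (the checkerboard copula). For that construction the bivariate margin of a corner piece is \emph{not} $M$ or $W$: two independent uniforms on $(1/2,1]$ have the independence copula $\Pi$ on their quarter-square, not the comonotone copula --- lying in the same half-interval is not comonotonicity. So $\kappa_G$ of each corner piece is $0$, not $\pm 1$, and the correct value of the whole construction is the one your own moment computation produced, $\rho(Y_i,Y_j)=(m^2/\sigma^2)P_{ij}$ with $m=\E[G^\i(U)\mid U>1/2]-\mu$ and $\sigma^2=\Var(G^\i(U))$. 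Since $m^2<\sigma^2$ unless $G$ is a symmetric two-point distribution, the checkerboard construction fails for essentially every $G$ of interest (e.g.\ for $G=\Phi$ it shrinks $P_{ij}$ by the factor $2/\pi$). There is also a secondary issue: the corner pieces are not copulas (their margins are uniform on half-intervals), so Proposition~\ref{prop:linearity:kappaG} does not apply to that decomposition as stated.

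The fix is to replace conditional independence within cells by \emph{perfect} dependence: take a single $U\sim\U(0,1)$ independent of $\bm B$ and set $V_j=B_jU+(1-B_j)(1-U)$, so that conditionally on $\bm B$ the pair $(V_i,V_j)$ is comonotone when $B_i=B_j$ and countermonotone when $B_i\neq B_j$, with each $V_j\sim\U(0,1)$ unconditionally. This is exactly the paper's construction. With that single change, the rest of your final paragraph is correct and coincides with the paper's proof: the bivariate copula of $(G^\i(V_i),G^\i(V_j))$ is the mixture $\P(B_i=B_j)M+\P(B_i\neq B_j)W$, the Range axiom (equivalently, $\rho(G^\i(U),G^\i(U))=1$ and $\rho(G^\i(U),G^\i(1-U))=-1$ for concordance-inducing $G$) gives $\kappa_G(M)=1$ and $\kappa_G(W)=-1$, and linearity yields $\kappa_G=\P(B_i=B_j)-\P(B_i\neq B_j)=\E[(2B_i-1)(2B_j-1)]=\rho(B_i,B_j)=P_{ij}$.
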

\begin{proof}
  Fix $P \in  \mathcal P_d^{\text{B}}(1/2)$.
  Then there exist $B_1,\dots,B_d \sim \Bern(1/2)$ such that $\rho(\bm B)=P$ for $\bm B = (B_1,\dots,B_d)$.
  For $U\sim \U(0,1)$ independent of $\bm B$, define
  \begin{align*}
    V_j = B_j U + (1-B_j) (1-U),\quad j=1,\dots,d.
  \end{align*}
  Then $V_j \sim \U(0,1)$ and thus $Y_j=G^\i(V_j) \sim G$, $j=1,\dots,d$.
  Note that $Y_j=G^\i(U)$ if $B_j=1$ and $Y_j=G^\i(1-U)$ if $B_j=0$.
  Furthermore, since $G$ is concordance inducing,
  \begin{align*}
    \rho(G^\i(U),G^\i(U))=1\quad\text{and}\quad
    \rho(G^\i(U),G^\i(1-U))=-1.
  \end{align*}
  Consequently, for all $i,j\in\{1,\dots,d\}$,
  \begin{align*}
    \rho(Y_i,Y_j)  %
    &= \rho(G^\i(U),G^\i(U))\P(B_i=B_j) +  \rho(G^\i(U),G^\i(1-U))\P(B_i\neq B_j)\\
    &= \P(B_i=B_j)-\P(B_i\neq B_j) = 2\P(B_i=B_j)-1.
  \end{align*}
  Since
  \begin{align*}
    \P(B_i=B_j)&=\P(B_i=0,B_j=0)+\P(B_i=1,B_j=1)\\
               &=\P(1-B_i=1,1-B_j=1)+\E(B_iB_j)\\
               &=\E((1-B_i)(1-B_j))+\E(B_iB_j)=2\E(B_iB_j)\\%=2\frac{\rho(B_i,B_j)+1}{4}\\
    &=\frac{\rho(B_i,B_j)+1}{2},
  \end{align*}
  we obtain
  \begin{align*}
  \rho(Y_i,Y_j)=2\frac{\rho(B_i,B_j)+1}{2}-1=\rho(B_i,B_j)
  \end{align*}
  and thus
  $P=\rho(\bm B)=\rho(\bm Y) \in \mathcal{K}_{G}$.
\end{proof}

Note that the construction $Y_j=G^\i (B_j U + (1-B_j) (1-U))$, $j=1,\dots,d$,
used in the proof of Proposition~\ref{prop:suff:cond:k:compatibility} was
utilized by \cite{huber2015multivariate} for the purpose of generating a
$d$-dimensional distribution with given margins $G$ and a correlation matrix $P$
where $P\in \mathcal P_d^{\text{B}}(1/2)$.

By Proposition~\ref{prop:suff:cond:k:compatibility}, a given matrix is found to
be $\kappa_{G}$-compatible if it belongs to $\mathcal P^{\text{B}}_d(1/2)$. The
relationship between $\mathcal{K}_{G}$ and $\mathcal P^{\text{B}}_d(1/2)$
depends on the $G$-function.  When $G$ is a symmetric Bernoulli distribution, it
holds that $\mathcal{K}_{G}=\mathcal P^{\text{B}}_d(1/2)$, whereas if $G$ is the
standard normal distribution function $\Phi$, then $\mathcal{K}_{G}$ coincides
with the set of all correlation matrices $\mathcal P_d$, which is strictly
larger than $\mathcal P^{\text{B}}_d(1/2)$; see
Proposition~\ref{prop:characterize:S:B:W}~\ref{prop:characterize:S:B:W:4} for
$\mathcal K_\Phi=\mathcal P_d$ and Section \ref{subsection:
  Bernoulli(1/2)-compatibility problem} for
$\mathcal P^{\text{B}}_d(1/2)\subset \mathcal P_d$.  As summarized by the
following corollary, $\mathcal P^{\text{B}}_d(1/2)$ and $\mathcal P_d$ are the
smallest and largest set of $\kappa_G$ compatible matrices for general $G$.

\begin{corollary}[Upper and lower bounds of $\mathcal K_G$]\label{corollary:upper:lower:bounds}
For any concordance inducing function $G$, the set of all $\kappa_G$-compatible matrices $\mathcal K_G$ satisfy $\mathcal P^{\text{B}}_d(1/2) \subseteq \mathcal K_G \subseteq \mathcal P_d$, and the upper and lower bounds are both attainable.
\end{corollary}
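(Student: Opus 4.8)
The plan is to assemble the corollary entirely from facts already established, so the proof is short. The inclusion $\mathcal P^{\text{B}}_d(1/2)\subseteq\mathcal K_G$ is nothing but Proposition~\ref{prop:suff:cond:k:compatibility}, which exhibits, for any concordance inducing $G$ and any $P\in\mathcal P^{\text{B}}_d(1/2)$, an explicit random vector with $G$-margins realizing $P$. For the inclusion $\mathcal K_G\subseteq\mathcal P_d$, I would argue directly from the representation underlying \eqref{transformed:cor}: if $P=(\kappa_G(X_i,X_j))$ for some random vector $\bm X$ with continuous margins $F_1,\dots,F_d$, put $Y_j=G^\i(F_j(X_j))\sim G$. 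Since $G$ is concordance inducing it is non-degenerate with finite second moment, so each $Y_j$ has finite, strictly positive variance and $\bm Y=(Y_1,\dots,Y_d)$ admits a well-defined correlation matrix; and by construction this correlation matrix is exactly $P$ (the $(i,j)$ entry is $\rho(Y_i,Y_j)=\kappa_G(X_i,X_j)$, the diagonal entries are $1$). As any correlation matrix is symmetric, positive semi-definite and has unit diagonal, $P\in\mathcal P_d$.

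For attainability of the two bounds I would specialize $G$. Taking $G$ to be the distribution function of $\Bern(1/2)$ (concordance inducing by Example~\ref{ex:bernoulli:G}), Proposition~\ref{prop:suff:cond:k:compatibility} gives $\mathcal P^{\text{B}}_d(1/2)\subseteq\mathcal K_G$; conversely, for this $G$ the transformed variables $Y_j=G^\i(F_j(X_j))$ are themselves $\Bern(1/2)$-distributed (cf. Example~\ref{ex:spearman:beta:waerden}\ref{ex:spearman:beta:waerden:b}), so every $P\in\mathcal K_G$ equals $\rho(\bm Y)$ for some $\bm Y$ with $\Bern(1/2)$ margins, i.e. $P\in\mathcal P^{\text{B}}_d(1/2)$. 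Hence $\mathcal K_G=\mathcal P^{\text{B}}_d(1/2)$ for this choice, so the lower bound is attained. For the upper bound, take $G=\Phi$, which is concordance inducing; then $\mathcal K_\Phi=\mathcal P_d$ by Proposition~\ref{prop:characterize:S:B:W}~\ref{prop:characterize:S:B:W:4}, so the upper bound is attained as well.

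The only genuinely non-routine ingredient is the identity $\mathcal K_\Phi=\mathcal P_d$ invoked for the upper bound, and that is a separate result (it rests on realizing an arbitrary correlation matrix by a Gaussian copula, whose normal-score transforms are exactly standard normal). Everything else here is bookkeeping with the identity $\kappa_G(X_i,X_j)=\rho(Y_i,Y_j)$ for $Y_j=G^\i(F_j(X_j))$, together with the standard characterization of $\mathcal P_d$ as the symmetric, positive semi-definite, unit-diagonal matrices. One small point worth stating explicitly is that $\mathcal P^{\text{B}}_d(1/2)\subseteq\mathcal P_d$ both as a sanity check on the sandwich and because it is used elsewhere; but this is immediate since $\Bern(1/2)$ has finite second moment, so any $\rho(\bm B)$ with $\bm B$ having $\Bern(1/2)$ margins is a correlation matrix.
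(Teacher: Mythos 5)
Your proposal is correct and follows essentially the same route as the paper, which establishes this corollary through the surrounding discussion: the lower inclusion is Proposition~\ref{prop:suff:cond:k:compatibility}, the upper inclusion is the observation that $P=(\rho(Y_i,Y_j))$ is a correlation matrix of $\bm Y=(G^\i(F_1(X_1)),\dots,G^\i(F_d(X_d)))$, and the two bounds are attained by $G=\Bern(1/2)$ and $G=\Phi$ via Proposition~\ref{prop:characterize:S:B:W}. Your additional check that $\mathcal K_G=\mathcal P^{\text{B}}_d(1/2)$ for Bernoulli $G$ (rather than merely $\supseteq$) is exactly the point the paper makes in the text preceding the corollary, so nothing is missing.
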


Note that the uniqueness of $G$ attaining bounds fails and possibly depend on $d$. For example, when $d\leq 9$, both of $G=\U(0,1)$ and $\Phi$ attain $\mathcal K_G=\mathcal P_d$; see Proposition~\ref{prop:characterize:S:B:W}~\ref{prop:characterize:S:B:W:1}, ~\ref{prop:characterize:S:B:W:2} and ~\ref{prop:characterize:S:B:W:4}.

Now we have found that the set $\mathcal P^{\text{B}}_d(1/2)$ plays important roles on $\kappa_G$-compatibility problem.
Natural questions regarding $\mathcal P^{\text{B}}_d(1/2)$ are how to check a given matrix belongs to $\mathcal P^{\text{B}}_d(1/2)$ and how large the set is in comparison to the set of all correlation matrices $\mathcal P_d$. These questions will be answered in Section~\ref{subsection: Bernoulli(1/2)-compatibility problem}.

\subsection{Characterizations of specific measures of concordance}
In this section, we study the three specific measures of concordance from
Example~\ref{ex:spearman:beta:waerden}, Spearman's rho, Blomqvist's beta and van
der Waerden's coefficient, which are denoted by $\rho_\text{S}$, $\beta$ and
$\zeta$, respectively. To this end, let $\mathcal S_d$, $\mathcal B_d$ and
$\mathcal W_d$ be the set of $d\times d$-matrices of Spearman's rho, Blomqvist's
beta and van der Waerden's coefficients, respectively.  As is done in the
previous subsection, denote by $\mathcal P_d$ the set of all
$d\times d$-correlation matrices, that is, the set of all symmetric, positive
semi-definite matrices in $[-1,1]^d$ with diagonal elements one.  It is
well-known that $\mathcal P_d$ is a convex set for any $d\ge 1$.  Let
$\mathcal P_d^{\text{U}}$ and $\mathcal P_d^{\text{B}}(p)$, $p \in (0,1)$, be
the set of all correlation matrices of $d$-dimensional random vectors whose
marginals are all $\U(0,1)$ and all $\Bern(p)$, respectively. By
Corollary~\ref{convexity of K_G set}, $\mathcal P_d^{\text{U}}$ and
$\mathcal P_d^{\text{B}}(p)$ are also convex sets. We can now characterize the
sets $\mathcal S_d$, $\mathcal B_d$ and $\mathcal W_d$.

\begin{proposition}[Characterizations of $\mathcal S_d$, $\mathcal B_d$ and $\mathcal W_d$]\label{prop:characterize:S:B:W}
  \begin{enumerate}
  \item\label{prop:characterize:S:B:W:1} $\mathcal P_d^{\text{U}} = \mathcal P_d$ for
    $d \leq 9$, that is, the set of correlation matrices of random vectors with
    standard uniform marginals coincides with the set of correlation matrices
    for $d\leq 9$.  For $d \geq 10$, $\mathcal P_d^{\text{U}} \subseteq \mathcal
    P_d$.
  \item\label{prop:characterize:S:B:W:2} $\mathcal S_d = {\mathcal P}_d^{\text{U}}$,
    that is, the set of Spearman's rho matrices coincides with the set of
    correlation matrices of random vectors with standard uniform marginals.
  \item\label{prop:characterize:S:B:W:3}
    $\mathcal B_{d} = \mathcal P_d^{\text{B}}(1/2)$, that is, the set of Blomqvist's
    beta matrices coincides with the set of correlation matrices of random
    vectors with symmetric Bernoulli marginals.
  \item\label{prop:characterize:S:B:W:4} $\mathcal W_d = \mathcal P_d$, that
    is, the set of van der Waerden's matrices coincides with the set
    of all correlation matrices.
  \end{enumerate}
\end{proposition}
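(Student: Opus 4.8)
The plan is to treat the four items essentially separately. Items~\ref{prop:characterize:S:B:W:2}, \ref{prop:characterize:S:B:W:3}, \ref{prop:characterize:S:B:W:4} each follow quickly from the defining representation of the relevant measure in Example~\ref{ex:spearman:beta:waerden} together with results already available, while item~\ref{prop:characterize:S:B:W:1} is where the real work lies. For item~\ref{prop:characterize:S:B:W:2} I would just unwind definitions: since $\kappa_{\U(0,1)}=\rho_{\text S}$ (Example~\ref{ex:spearman:beta:waerden}) and $\U(0,1)$ is concordance inducing, $\mathcal S_d=\mathcal K_{\U(0,1)}$, and by the representation $\mathcal K_G=\{\rho(\bm Y):\bm Y\in\mathcal F_d(G,\dots,G)\}$ noted above this is exactly the set of correlation matrices of random vectors with all margins $\U(0,1)$, that is, $\mathcal P_d^{\text U}$. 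Concretely: if $P=(\rho_{\text S}(X_i,X_j))$ for $\bm X$ with continuous margins $F_i$, then $\bm U=(F_1(X_1),\dots,F_d(X_d))$ has $\U(0,1)$ margins and $\rho(\bm U)=P$; conversely, for $\bm U$ with $\U(0,1)$ margins, $\bm X=\bm U$ has Spearman matrix $(\rho(U_i,U_j))=P$.

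Item~\ref{prop:characterize:S:B:W:3} goes the same way through Example~\ref{ex:spearman:beta:waerden}~\ref{ex:spearman:beta:waerden:b}: for $\bm X$ with continuous margins, $\beta(X_i,X_j)=\rho(B_i,B_j)$ with $B_i=\I_{\{F_i(X_i)>1/2\}}\sim\Bern(1/2)$, which gives $\mathcal B_d\subseteq\mathcal P_d^{\text B}(1/2)$; the reverse inclusion is precisely Proposition~\ref{prop:suff:cond:k:compatibility} applied to the $\Bern(1/2)$ distribution function $G$ (concordance inducing by Example~\ref{ex:bernoulli:G}), for which $\kappa_G=\beta$, so that $\mathcal P_d^{\text B}(1/2)\subseteq\mathcal K_G=\mathcal B_d$ (the random vector $\bm V$ with $V_j=B_jU+(1-B_j)(1-U)$ built there has uniform, hence continuous, margins and its matrix of Blomqvist's beta is $\rho(\bm B)$). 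Hence $\mathcal B_d=\mathcal P_d^{\text B}(1/2)$.

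For item~\ref{prop:characterize:S:B:W:4}, the inclusion $\mathcal W_d=\mathcal K_\Phi\subseteq\mathcal P_d$ is immediate, since any $\kappa_G$-matrix is symmetric, positive semi-definite with unit diagonal (it is also Corollary~\ref{corollary:upper:lower:bounds}). For the reverse inclusion, given $P\in\mathcal P_d$ with Cholesky factor $A$ (so $P=AA\T$), set $\bm X=A\bm Z$ for $\bm Z$ a vector of independent $\N(0,1)$ variables; then each $X_i$ is a linear combination of independent normals with variance $(AA\T)_{ii}=P_{ii}=1$, so $X_i\sim\N(0,1)$, $F_i=\Phi$, and $\zeta(X_i,X_j)=\rho(\Phi^\i(\Phi(X_i)),\Phi^\i(\Phi(X_j)))=\rho(X_i,X_j)=P_{ij}$; thus $P\in\mathcal W_d$. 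This is just the classical attainability of correlation matrices recalled in the introduction.

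Item~\ref{prop:characterize:S:B:W:1} remains, and is the main obstacle. The inclusion $\mathcal P_d^{\text U}\subseteq\mathcal P_d$ holds for every $d$ because a correlation matrix of a random vector is, in particular, a correlation matrix. The substantive claim is that every $d\times d$ correlation matrix $P$ with $d\le 9$ is realized by a random vector with $\U(0,1)$ margins. A natural route is via normal scores: put $R=(2\sin(\tfrac{\pi}{6}P_{ij}))_{i,j=1}^{d}$ (which has unit diagonal), and if $R$ is positive semi-definite, take $\bm Z\sim\N(\bm 0,R)$ and $\bm U=(\Phi(Z_1),\dots,\Phi(Z_d))$; then $\bm U$ has $\U(0,1)$ margins and, by the classical identity $\rho(\Phi(Z_i),\Phi(Z_j))=\tfrac{6}{\pi}\arcsin(R_{ij}/2)=P_{ij}$, correlation matrix $P$. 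Everything thus reduces to whether the entrywise map $x\mapsto 2\sin(\tfrac{\pi}{6}x)$ carries every $d\times d$ correlation matrix to a positive semi-definite one, which is known to hold for $d\le 9$; for $d\ge 10$ there is an explicit $P\in\mathcal P_{10}\setminus\mathcal P_{10}^{\text U}$, so the bound $d\le 9$ is sharp. I would import both of these positive-semi-definiteness/counterexample facts from the literature; this is the one point in the proof not reducible to the earlier results of the paper.
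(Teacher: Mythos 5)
Your proposal is correct, and for items~\ref{prop:characterize:S:B:W:1}, \ref{prop:characterize:S:B:W:2} and \ref{prop:characterize:S:B:W:4} it matches the paper, which likewise imports item~\ref{prop:characterize:S:B:W:1} from the literature (the $2\sin(\pi x/6)$ reduction you sketch is exactly how that external result is proved) and treats \ref{prop:characterize:S:B:W:2} and \ref{prop:characterize:S:B:W:4} as immediate from the definitions. The genuine difference is in the inclusion $\mathcal P_d^{\text{B}}(1/2)\subseteq\mathcal B_d$ of item~\ref{prop:characterize:S:B:W:3}: the paper argues directly from a given Bernoulli vector $\bm B$ by choosing a copula $C$ of $\bm B$, passing to its survival copula $\overline C$, taking $\bm U\sim\overline C$ and verifying $(G^\i(U_1),\dots,G^\i(U_d))\deq\bm B$ on all of $\{0,1\}^d$, whereas you invoke Proposition~\ref{prop:suff:cond:k:compatibility}, whose mixture construction $V_j=B_jU+(1-B_j)(1-U)$ produces a uniform-margin vector whose Blomqvist matrix equals $\rho(\bm B)$. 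Your route is shorter and only uses the correlation matrix of $\bm B$ rather than its full joint law; the paper's route is self-contained and shows that any coupling with the prescribed Bernoulli joint distribution can be lifted to continuous margins, which is the substantive point behind the identification $\mathcal K_G=\{\rho(\bm Y):\bm Y\in\mathcal F_d(G,\dots,G)\}$ when $G$ is discrete. One small inaccuracy to correct: you assert an explicit $P\in\mathcal P_{10}\setminus\mathcal P_{10}^{\text{U}}$, but according to the paper strictness of the inclusion is only established for $d\ge 12$ and remains conjectural for $d=10,11$; this claim is not needed anyway, since item~\ref{prop:characterize:S:B:W:1} only asserts the (trivial) inclusion $\mathcal P_d^{\text{U}}\subseteq\mathcal P_d$ for $d\ge 10$.
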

\begin{proof}
  \ref{prop:characterize:S:B:W:1} is from \cite{devroye2015copulas}, and \ref{prop:characterize:S:B:W:2} and \ref{prop:characterize:S:B:W:4} are direct consequences of the definition of Spearman's rho and van der Waerden's coefficient. We thus have left to prove \ref{prop:characterize:S:B:W:3}.
  Consider ``$\subseteq$''. Let $(\beta_{ij}) \in \mathcal B_d$.
  Then there exists a $d$-dimensional random vector $\bm X$ such that $\beta(X_i,X_j)=\beta_{ij}$.
  By Example~\ref{ex:spearman:beta:waerden}~\ref{ex:spearman:beta:waerden:b},
  \begin{align*}
    \beta_{ij}=\rho(G^\i(F_i(X_i)),G^\i(F_j(X_j))),\quad i,j=1,\dots,d,
  \end{align*}
  where $G$ is the distribution function of $\Bern(1/2)$.
  Since $G^\i(F_i(X_i)),G^\i(F_j(X_j))\sim \Bern(1/2)$, we obtain that $(\beta_{ij}) \in \mathcal P_d^{\text{B}}(1/2)$.
  Now consider ``$\supseteq$''. Let $\bm{B}=(B_1,\dots,B_d)$ be a
  $d$-dimensional symmetric Bernoulli random vector with correlation matrix
  $\rho(\bm{B})=(\rho_{ij})$. Let $C$ be any copula such that
  \begin{align*}
    \P(B_1\leq b_1,\dots,B_d\leq b_d) = C(\P(B_1\leq b_1),\dots,\P(B_d\leq b_d)).
  \end{align*}
  Since, for $j=1,\dots,d$,
  \begin{align*}
    \P(B_j\leq b_j)=\begin{cases}
      0, & \text{if}\ b_j < 0, \\
      1/2, & \text{if}\ 0 \leq b_j <1,\\
      1, & \text{if}\ b_j\ge 1,\\
    \end{cases}
  \end{align*}
  $C$ is only uniquely determined in $(1/2,\dots,1/2)$ inside $[0,1]^d$.
  Furthermore, for any $(j_1,\dots,j_d)\in \{0,1\}^{d}$, the following identity holds:
  \begin{align*}
    C((1/2)^{j_1},\dots,(1/2)^{j_d}) = \P(B_1 \leq 1-j_1,\dots,B_d \leq 1-j_d).
  \end{align*}
  Let $\overline C$ be the survival copula of $C$ and $\bm{U} \sim \overline C$, so $\bm{1}-\bm{U}\sim C$; in particular,
  the marginals $F_1,\dots,F_d$ of $\bm{U}$ are $\U(0,1)$.
  Let $G(p)=\I_{\{p>1/2\}}$ be the distribution function of the symmetric Bernoulli distribution.
  Then
  \begin{align*}
    &\phantom{{}={}}\P(G^\i(U_1)\leq 1-j_1,\dots,G^\i(U_d)\leq 1-j_d)\\
    &= \P(\I_{\{U_1>1/2\}}\leq 1-j_1,\dots,\I_{\{U_d>1/2\}}\leq 1-j_d)\\
    &=\P(1-U_1 \leq (1/2)^{j_1},\dots,1-U_d \leq (1/2)^{j_d})
    =C((1/2)^{j_1},\dots,(1/2)^{j_d})\\
    &= \P(B_1 \leq 1-j_1,\dots,B_d \leq 1-j_d),\quad (j_1,\dots,j_d)\in \{0,1\}^{d}.
  \end{align*}
  Therefore, we have that $\bm{B}=(B_1,\dots,B_d) \deq (G^\i(U_1),\dots,G^\i(U_d))$.
  Consequently,
  \begin{align*}
    \beta(U_i,U_j) = \rho(G^\i(F_i(U_i)),G^\i(F_j(U_j))) =\rho(G^\i(U_i),G^\i(U_j)) = \rho(B_i,B_j) = \rho_{ij}.
  \end{align*}
  Since the random vector $\bm{U}$ attains $(\rho_{ij})$ as its Blomqvist's beta matrix, we have $(\rho_{ij}) \in \mathcal B_{d}$.
\end{proof}

Concerning
Proposition~\ref{prop:characterize:S:B:W}~\ref{prop:characterize:S:B:W:1},
\cite{devroye2015copulas} conjectured that the inclusion relationship among
$\mathcal P_d^{\text{U}}$ and $\mathcal P_d$ is strict for $d \geq 10$ .  Later
\cite{wang2018compatible} revealed that $\mathcal P_d$ is strictly larger than
$\mathcal P_d^{\text{U}}$ for $d\geq 12$. Although a complete characterization
of $\mathcal P_d^{\text{U}}$ is still unknown for $d \geq 10$, it is known that
$\mathcal P_d^{\text{U}}$ and $\mathcal P_d$ are not significantly different for
any $d\geq 1$ as explained in the following remark.

\begin{remark}[$\mathcal S_d$ and $\mathcal P_d$]
    Even for $d\geq 10$, $\mathcal S_d$ and $\mathcal P_d$ cannot be
    largely different since a Gauss copula with correlation parameter
    $P = (\rho_{ij})\in \mathcal P_{d}$ has Spearman's rho matrix
    $(\rho_{\text{S},ij})$ with
    $\rho_{\text{S},ij} = (6/\pi)\arcsin(\rho_{ij}/2)$, or equivalently,
    $\rho_{ij}=2\sin(\pi \rho_{\text{S},ij}/6)$. Since
    $|\rho_{\text{S},ij}-\rho_{ij}|=|\rho_{\text{S},ij}-2\sin(\pi
    \rho_{\text{S},ij}/6)|\leq 0.0181$, one can find an elementwise close
    Spearman's rho matrix attained by a Gauss copula for every correlation
    matrix $P\in \mathcal P_{d}$.
\end{remark}

The consequences of Proposition~\ref{prop:characterize:S:B:W} related to the
compatibility problem are as follows.  First,
Proposition~\ref{prop:characterize:S:B:W}~\ref{prop:characterize:S:B:W:1} and
\ref{prop:characterize:S:B:W:2} allow one to check that a given
$d\times d$-matrix for $d\le 9$ is $\rho_{\text{S}}$-compatible via checking
whether the matrix is a correlation matrix, for example, by trying to compute
its Cholesky factor.  For $d\geq 10$, no straightforward way to check
$\rho_{\text{S}}$-compatibility is available yet while the sufficient condition
in Proposition~\ref{prop:suff:cond:k:compatibility} is still valid.  Second,
Proposition~\ref{prop:characterize:S:B:W}~\ref{prop:characterize:S:B:W:3} states
that the set of all Blomqvist's beta matrices are completely characterized by
the set of correlation matrices of random vectors with symmetric Bernoulli
margins.  In Subsection~\ref{subsection: Bernoulli(1/2)-compatibility problem},
we will discuss the problem to check a given matrix belongs to
$\mathcal P_d^{\text{B}}(1/2)$.  Finally,
Proposition~\ref{prop:characterize:S:B:W}~\ref{prop:characterize:S:B:W:4} says
that the set of van der Waerden's matrices coincides with the set of all
correlation matrices, and thus, checking $\zeta$-compatibility is
straightforward. In terms of checking compatibility, this property of van der
Waerden's coefficient is an attractive feature that $\rho_S$ and $\beta$ do not
satisfy for any dimension $d\geq 1$. Note that this property is not unique to
van der Waerden's coefficient but holds for any elliptical distribution $G$ with
finite second moments; see \cite[Chapter~4]{joe1997}.

\subsection{$\Bern(1/2)$-compatibility problem}\label{subsection: Bernoulli(1/2)-compatibility problem}
As we have seen in Section~\ref{sec:cor:based:measures} and \ref{sec:compat} so
far, $\mathcal P_{d}(1/2)$ plays important roles when studying matrix
compatibility problems since it coincides with $\mathcal B_d$, the set of all
Blomqvist's beta matrices, and $\mathcal P_{d}(1/2) \subseteq \mathcal{K}_{G}$,
the set of all $\kappa_{G}$-compatible matrices.  If
$P \in \mathcal P_{d}(1/2)$, we call $P$ \emph{$\Bern(1/2)$-compatible}.  In
this section, we address the \emph{membership testing problem} for
$\mathcal P_{d}(1/2)$, that is, a test whether a given matrix is
$\Bern(1/2)$-compatible or not.

\cite{huber2017bernoulli} presented a characterization of the set
$\mathcal P_{d}^{\text{B}}(1/2)$ which can be used for membership testing as we now
explain.  For $l=1,\dots,2^{d-1}$, let $\bbb(l) = (b_1,\dots,b_d)$ be the binary
expansion of $l$, that is,
\begin{align*}
  \bbb(l) = (b_1,\dots,b_d)\quad \text{if and only if} \quad l = 1 + \sum_{j=1}^{d}b_{j}2^{d-j}.
\end{align*}
Note that $b_1$ is equal to 0 for all $l=1,\dots,2^{d-1}$.
For each $l$, let $\pi_l$ be the $d$-dimensional distribution which puts equal mass on $\bbb(l) = (b_1,\dots,b_d)$ and $\bm{1}-\bbb(l) = (1-b_1,\dots,1-b_d)$.
One can easily check %
that the correlation matrix of $\bm{X} \sim \pi_l$ is given by
\begin{align*}
  \rho(X_i,X_j) = 2\I_{\{b_i(l) = b_j(l)\}} -1,\quad i,j = 1,\dots,d,
\end{align*}
where $b_i(l)$ denotes the $i$th element of $\bm{b}(l)$.
This leads to the following characterization of the set $\mathcal P_d^{\text{B}}(1/2)$; see \cite{huber2017bernoulli}.

\begin{theorem}[Characterization of $\mathcal P_d^{\text{B}}(1/2)$]\label{theorem: characterization of bernoulli(1/2) correlations}
  $\mathcal P_d^{\text{B}}(1/2)$ is the convex hull of correlation matrices of the two-point distributions $\pi_{1},\dots,\pi_{2^{d-1}}$, that is,
  \begin{align*}
    \mathcal P_d^{\text{B}}(1/2) &= \conv\{\rho(\pi_l):\ l=1,\dots,2^{d-1}\}\\
                          &= \biggl\{\,\sum_{l=1}^{2^{d-1}}\alpha_l \rho(\pi_l):\  \alpha_1,\dots,\alpha_{2^{d-1}}\geq 0, \ \alpha_1+ \cdots +  \alpha_{2^{d-1}} = 1 \biggr\},
  \end{align*}
  where $\rho(\pi_l)$ is the correlation matrix of $\pi_l$.
\end{theorem}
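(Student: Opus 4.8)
The plan is to establish the two set equalities by a standard Minkowski-type argument, combining an ``easy'' inclusion that follows directly from convexity with a ``hard'' inclusion that reduces an arbitrary symmetric-Bernoulli vector to a mixture of the two-point distributions $\pi_1,\dots,\pi_{2^{d-1}}$. First I would observe that each $\pi_l$ is itself a symmetric Bernoulli random vector: if $\bm{X}\sim\pi_l$ then each $X_j$ puts mass $1/2$ on $b_j(l)$ and $1/2$ on $1-b_j(l)$, hence $X_j\sim\Bern(1/2)$, and a direct computation (as noted in the excerpt) gives $\rho(X_i,X_j)=2\I_{\{b_i(l)=b_j(l)\}}-1$. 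Thus $\rho(\pi_l)\in\mathcal P_d^{\text{B}}(1/2)$ for each $l$. Since $\mathcal P_d^{\text{B}}(1/2)$ is convex by Corollary~\ref{convexity of K_G set} (applied with $G=\Bern(1/2)$), it contains all convex combinations of the $\rho(\pi_l)$, which gives the inclusion $\conv\{\rho(\pi_l):l=1,\dots,2^{d-1}\}\subseteq\mathcal P_d^{\text{B}}(1/2)$.

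For the reverse inclusion, let $P\in\mathcal P_d^{\text{B}}(1/2)$, so there is a random vector $\bm B=(B_1,\dots,B_d)$ with each $B_j\sim\Bern(1/2)$ and $\rho(\bm B)=P$. The key idea is that the correlation matrix of a $\{0,1\}^d$-valued vector with uniform margins depends only on the joint probabilities $p_{\bm b}=\P(\bm B=\bm b)$, $\bm b\in\{0,1\}^d$, and in fact only through the ``symmetrized'' probabilities $q_{\bm b}=p_{\bm b}+p_{\bm 1-\bm b}$: indeed $\E(B_iB_j)=\sum_{\bm b}b_ib_j\,p_{\bm b}$ and, pairing $\bm b$ with $\bm 1-\bm b$, one checks $\E(B_iB_j)=\tfrac14+\tfrac14\sum_{\bm b: b_1=0}q_{\bm b}\,(2b_i-1)(2b_j-1)$, while $\E(B_j)=1/2$ and $\Var(B_j)=1/4$ are forced by the margins. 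Since $\rho(B_i,B_j)=4\E(B_iB_j)-1$, we get
\begin{align*}
  \rho(\bm B)=\sum_{l=1}^{2^{d-1}}q_{\bbb(l)}\,\rho(\pi_l),
\end{align*}
where I index the $2^{d-1}$ pairs $\{\bm b,\bm 1-\bm b\}$ by $l$ via the binary expansion with $b_1=0$. The weights $q_{\bbb(l)}$ are nonnegative and sum to $\sum_{\bm b\in\{0,1\}^d}p_{\bm b}=1$, so $P=\rho(\bm B)$ is a convex combination of the $\rho(\pi_l)$, establishing $\mathcal P_d^{\text{B}}(1/2)\subseteq\conv\{\rho(\pi_l):l=1,\dots,2^{d-1}\}$.

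The main technical point — and the one I would write out carefully — is the identity expressing $\rho(\bm B)$ as the $q$-weighted average of the $\rho(\pi_l)$; everything hinges on the fact that for $\Bern(1/2)$ margins the first and second marginal moments are pinned down, so only the pairwise cross-moments $\E(B_iB_j)$ vary, and these are affine functions of the symmetrized cell probabilities $q_{\bm b}$ that are exactly realized by the atoms $\pi_l$. Once this is in hand, both inclusions are immediate, and the second displayed formula in the statement is just the explicit description of the convex hull of the finite set $\{\rho(\pi_1),\dots,\rho(\pi_{2^{d-1}})\}$. No compactness or Carathéodory-type argument is needed beyond noting that a finite convex hull is closed.
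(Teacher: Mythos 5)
Your proof is correct, but there is nothing in the paper to compare it against: the paper does not prove this theorem, it imports it wholesale from \cite{huber2017bernoulli} (the only ``proof'' is the citation after the display of $\rho(X_i,X_j)=2\I_{\{b_i(l)=b_j(l)\}}-1$). So what you have written is a self-contained replacement. Your two inclusions are the right ones: the easy direction follows either from the paper's Corollary~\ref{convexity of K_G set} as you invoke it, or even more directly from the fact that a mixture $\sum_l\alpha_l\pi_l$ still has $\Bern(1/2)$ margins and that, with means and variances pinned at $1/2$ and $1/4$, the map from the law of $\bm B$ to $\rho(\bm B)=4\E(B_iB_j)-1$ is affine. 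The hard direction via the symmetrized cell probabilities $q_{\bm b}=p_{\bm b}+p_{\bm 1-\bm b}$ is exactly the decomposition that makes the extreme points visible, and the weights are nonnegative and sum to one because the pairs $\{\bm b,\bm 1-\bm b\}$ partition $\{0,1\}^d$.

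The one step you should write out in full is the identity for $\E(B_iB_j)$, because the pairing alone does not give it. Setting $s_i=2b_i-1$, the pairing yields
\begin{align*}
\E(B_iB_j)=\tfrac14\sum_{\bm b\,:\,b_1=0}q_{\bm b}\,(1+s_is_j)\;+\;\tfrac14\sum_{\bm b\,:\,b_1=0}(p_{\bm b}-p_{\bm 1-\bm b})\,(s_i+s_j),
\end{align*}
and the second sum vanishes only because the margin condition $\E(B_i)=1/2$ forces $\sum_{\bm b:b_1=0}(p_{\bm b}-p_{\bm 1-\bm b})\,s_i(\bm b)=0$ for every $i$. You gesture at the margins being ``pinned down,'' but this cancellation is precisely where they enter; for non-symmetric Bernoulli margins the symmetrized probabilities would not determine $\E(B_iB_j)$, and the statement itself would fail. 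With that line added, the argument is complete.
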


\begin{remark}[Cut polytope and elliptope]
  By Theorem~\ref{theorem: characterization of bernoulli(1/2) correlations},
  $\mathcal P_d^{\text{B}}(1/2)$ coincides with the set known as a \emph{cut
    polytope}, which is the collection of matrices $\bc\bc\T$ for all
  $\bc \in \{-1,1\}^d$.  Moreover, its positive semi-definite relaxation is
  known to be the \emph{elliptope} $\mathcal P_d$; see
  \cite{laurent1995positive} and \cite{tropp2018simplicial}.
\end{remark}

\begin{example}[Cases $d=2$ and $d=3$]
  Write $P=(\rho_{ij})\in \mathcal P_d^{\text{B}}(1/2)$.  When $d=2$,
  $\rho_{12}=\rho_{21}$ and $\rho_{12}$ can take any value from $-1$ to $1$
  since $\rho_{12}=\alpha (+1) + (1-\alpha) (-1) = 2\alpha - 1$ for
  $\alpha \in [0,1]$.  When $d=3$, the characterization in Theorem~\ref{theorem:
    characterization of bernoulli(1/2) correlations} reduces to
  \begin{align}\label{tetrahedron}
    -1 \leq \sum_{1\le i<j \le 3}\rho_{ij} \leq 1 + 2 \min_{1\le i,j\le 3}\{\rho_{ij}\}.
  \end{align}
  In terms of the triple $(\rho_{12},\rho_{13},\rho_{23})$ of correlations, \eqref{tetrahedron} forms a tetrahedron with vertices $(1,1,1)$, $(1,0,0)$, $(0,1,0)$ and $(0,0,1)$.
  One can check that $\mathcal P_{d}^{\text{B}}(1/2)$ is a strict subset of $\mathcal P_{d}$ for $d\geq 3$.
  For instance, consider a matrix of the form
  \begin{align*}
    P(\rho)=\begin{pmatrix}
        1&  \rho & \rho  \\
        \rho & 1 & \rho  \\
        \rho & \rho &  1 \\
      \end{pmatrix}.
  \end{align*}
  $P(\rho)$ is a proper correlation matrix if and only if
  $-1/2\leq \rho \leq 1$.  On the other hand, the inequality in
  \eqref{tetrahedron} says that $P(\rho) \in \mathcal P^{\text{B}}_d(1/2)$ if and only
  if $-1/3 \leq \rho \leq 1$.  Therefore, if $-1/2\leq \rho < -1/3$, then
  $P(\rho)$ belongs to $\mathcal P_d$ but not to $\mathcal P_{3}^{\text{B}}(1/2)$.
\end{example}

The characterization in Theorem~\ref{theorem: characterization of bernoulli(1/2)
  correlations} provides a method to check that a given matrix is
$\Bern(1/2)$-compatible.
\begin{proposition}[Checking $\Bern(1/2)$-compatibility]
  A given matrix $P=(\rho_{ij})$ is $\Bern(1/2)$-compatible if and only if there
  exist $\alpha_1,\dots,\alpha_{2^{d-1}}\geq 0$ such that the following
  $1+ d(d-1)/2$ equations hold:{}
  \begin{align*}
    \alpha_1 + \cdots+\alpha_{2^{d-1}} = 1, \quad \sum_{l=1}^{2^{d-1}}\alpha_l \I_{\{b_i(l) = b_j(l)\}} = \frac{\rho_{ij}+1}{2},\quad 1\leq i<j\leq d.
  \end{align*}
  Equivalently, the following \emph{phase~I linear program} attains zero:
  \begin{align}
    \label{phase I LP} \min z_1 + \cdots + z_{2^{d-1}}\quad\text{subject to}\ \begin{cases}
      D{\bm \alpha} + {\bm z} = {\bm \lambda},\\
      {\bm \alpha},{\bm z} \geq {\bm 0},
    \end{cases}
  \end{align}
  where ${\bm \alpha} = (\alpha_1,\dots,\alpha_{2^{d-1}})\in [0,1]^{2^{d-1}}$, $\bm \lambda =(\lambda_{12},\lambda_{13},\lambda_{23},\dots,\lambda_{d-1\,d},1)\in [0,1]^{1+d(d-1)/2}$ for $\lambda_{ij}=(\rho_{ij}+1)/2$ and
  \begin{align*}
    D =
    \begin{pmatrix}
      \I_{\{b_1(1) = b_2(1)\}}  & \I_{\{b_1(2) = b_2(2)\}} &\cdots & \I_{\{b_{1}(2^{d-1}) = b_{2}(2^{d-1})\}} \\
      \I_{\{b_1(1) = b_3(1)\}}  & \I_{\{b_1(2) = b_3(2)\}} &\cdots & \I_{\{b_{1}(2^{d-1}) = b_{3}(2^{d-1})\}} \\
      \I_{\{b_2(1) = b_3(1)\}}  & \I_{\{b_2(2) = b_3(2)\}} &\cdots & \I_{\{b_{2}(2^{d-1}) = b_{3}(2^{d-1})\}} \\
      \vdots                  & \vdots                     &\vdots &  \vdots \\
      \I_{\{b_{d-1}(1) = b_{d}(1)\}} &  \I_{\{b_{d-1}(2) = b_{d} (2)\}}  &\cdots &  \I_{\{b_{d-1}(2^{d-1}) = b_{d}(2^{d-1})\}} \\
      1                  & 1                     &\cdots &    1\\
    \end{pmatrix} \in \{0,1\}^{\bigl(1+\frac{d(d-1)}{2}\bigr)\times 2^{d-1}}\!\!\!\!.
  \end{align*}
\end{proposition}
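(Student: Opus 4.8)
The statement is an immediate reformulation of Theorem~\ref{theorem: characterization of bernoulli(1/2) correlations}, so the plan is essentially bookkeeping. First I would recall that, by that theorem, $P=(\rho_{ij})$ is $\Bern(1/2)$-compatible if and only if $P=\sum_{l=1}^{2^{d-1}}\alpha_l\,\rho(\pi_l)$ for some $\alpha_1,\dots,\alpha_{2^{d-1}}\ge 0$ with $\alpha_1+\cdots+\alpha_{2^{d-1}}=1$. The next step is to turn this matrix identity into scalar equations using the explicit form $\rho(X_i,X_j)=2\I_{\{b_i(l)=b_j(l)\}}-1$ for $\bm X\sim\pi_l$ recorded just before the theorem: the diagonal entries equal $1$ automatically for every choice of the $\alpha_l$ (since $b_i(l)=b_i(l)$), while for $i\ne j$ the $(i,j)$ entry reads $\rho_{ij}=2\sum_{l}\alpha_l\I_{\{b_i(l)=b_j(l)\}}-\sum_l\alpha_l$. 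Imposing the normalization $\sum_l\alpha_l=1$ and rearranging gives $\sum_l\alpha_l\I_{\{b_i(l)=b_j(l)\}}=(\rho_{ij}+1)/2=\lambda_{ij}$, and by symmetry of $P$ it suffices to require this for $1\le i<j\le d$. Collecting these $d(d-1)/2$ equations together with $\alpha_1+\cdots+\alpha_{2^{d-1}}=1$ yields exactly the $1+d(d-1)/2$ equations in the statement, i.e.\ $D\balpha=\blambda$, $\balpha\ge\bzero$, with the all-ones bottom row of $D$ encoding the normalization.

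For the linear-programming reformulation I would observe that feasibility of $D\balpha=\blambda$, $\balpha\ge\bzero$ is precisely what a phase~I simplex step decides. Since $\lambda_{ij}=(\rho_{ij}+1)/2\in[0,1]$ and the last component of $\blambda$ equals $1$, we have $\blambda\ge\bzero$; hence the auxiliary problem \eqref{phase I LP} is feasible (take $\balpha=\bzero$, $\bm z=\blambda$) and its objective is bounded below by $0$, so an optimum exists with nonnegative value. If the original system has a solution $\balpha^{\ast}$, then $(\balpha^{\ast},\bzero)$ is feasible for \eqref{phase I LP} with objective $0$, so the minimum is $0$; conversely, any feasible point of value $0$ forces $\bm z=\bzero$, whence $D\balpha=\blambda$, $\balpha\ge\bzero$. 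This establishes the stated equivalence.

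There is essentially no hard step here; the only points that need care are (i) checking that the diagonal constraints are vacuous and that symmetry of $P$ lets one restrict to $i<j$, so that the equation count is genuinely $1+d(d-1)/2$ and the dimensions of $D$ and $\blambda$ are as displayed, and (ii) verifying $\blambda\ge\bzero$, which is what makes the standard phase~I initialization $\bm z=\blambda$ legitimate and guarantees \eqref{phase I LP} attains its minimum. One may add, if desired, that since $D\in\{0,1\}^{(1+d(d-1)/2)\times 2^{d-1}}$ with column sums at least $1$ and $\blambda$ has entries in $[0,1]$, the feasible region of \eqref{phase I LP} is a bounded polytope, so attainment is automatic without invoking general LP existence results.
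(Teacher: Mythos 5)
Your proposal is correct and follows exactly the route the paper intends: the paper states this proposition without a written proof, treating it as an immediate reformulation of Theorem~\ref{theorem: characterization of bernoulli(1/2) correlations} via the entrywise identity $\rho(X_i,X_j)=2\I_{\{b_i(l)=b_j(l)\}}-1$ for $\bm X\sim\pi_l$, plus the standard phase~I feasibility equivalence (the paper itself only records the feasible point $(\bm\alpha,\bm z)=(\bm 0,\bm\lambda)$ after the statement). Your additional checks---that the diagonal constraints are vacuous, that symmetry reduces to $i<j$, and that the feasible region is a bounded polytope so the minimum is attained---are all sound and merely make explicit what the paper leaves implicit.
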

Note that the set of constraints in \eqref{phase I LP} is always nonempty since
$(\bm \alpha,\bm z) = (\bm 0,\bm \lambda)$ is a feasible solution.  The phase~I
linear program can be solved, for example, with the \R\ package \textsf{lpSolve}
although it is computationally demanding for large $d$. This is to be expected
since such problems are known to be NP-complete; see \cite{pitowsky1991correlation}.

Once a (componentwise) non-negative vector $\bm \alpha^{\ast}$ such that
$D\bm \alpha^{\ast} = \bm \lambda$ is obtained, the corresponding symmetric
Bernoulli random vector $\bm{B}$ with correlation matrix $P=(\rho_{ij})$ can be
simulated by the following algorithm, which enables us to solve the
attainability problem discussed in Section~\ref{subsec: Attainability}.

\begin{algorithm}[Simulating random vectors with $\Bern(1/2)$ marginals and given correlation matrix $P$]
\label{algorithm: generate symmetric bernoulli}
  \begin{enumerate}
  \item For $P$, solve \eqref{phase I LP} to find $(\alpha_{1},\dots,\alpha_{2^{d-1}})$.
  \item Choose the index $l$ with probability $\alpha_{l}$, $l\in \{1,\dots,2^{d-1}\}$.
  \item Set $\bm{B} = \bbb(l)$ or $\bm{1} - \bbb(l)$ with probability $1/2$ each.
  \end{enumerate}
\end{algorithm}

\begin{example}[Numerical example for $d=3$]
Consider the two $3\times 3$ matrices
  \begin{align*}
  P_1=\begin{pmatrix}
  1 & -0.95 & 0.5 \\
  -0.95 & 1 & -0.4 \\
  0.5 & -0.4 & 1 \\
        \end{pmatrix},\quad
  P_2=\begin{pmatrix}
  1 & -0.9 & 0.5 \\
  -0.9 & 1 & -0.4 \\
  0.5 & -0.4 & 1 \\
\end{pmatrix},
  \end{align*}
  both of which can be shown to be positive definite, so correlation matrices.
  For $d=3$, the numbers $l=1,\dots,2^{d-1}=4$ have the binary expansions $\bbb(1)=(0,0,0)$,
  $\bbb(2)=(0,0,1)$, $\bbb(3)=(0,1,0)$ and $\bbb(4)=(0,1,1)$.
  The corresponding matrix $D$ is then given by
  \begin{align*}%
    D = \begin{pmatrix}
        1 & 1 & 0 & 0\\
        1 & 0 & 1 & 0\\
        1 & 0 & 0 & 1\\
        1 & 1 & 1 & 1\\
      \end{pmatrix}.
  \end{align*}
  For $P_1$,
  $\lambda_1=(\lambda_{1,12},\lambda_{1,13},\lambda_{1,23},1)=(0.025, 0.750,
  0.300, 1.000)$.  Solving the phase I linear program with the \R\ package
  \textsf{lpSolve} yields the minimum 0.025 of the objective function
  $z_1+z_2+z_3+z_4$, which does not attain zero. Therefore, although
  $P_1$ is a proper correlation matrix, it is not $\Bern(1/2)$-compatible.  For
  $P_2$, $\lambda_2 = (0.050, 0.750, 0.300, 1.000)$.  By using \textsf{lpSolve},
  the objective function is found to achieve zero, and we thus numerically
  checked that $P_2 \in \mathcal P_d^{\text{B}}(1/2)$. These results can also be
  confirmed with the inequality in \eqref{tetrahedron}.
\end{example}

One can thus check the compatibility of Blomqvist's beta matrices (or,
equivalently, correlation matrices of random vectors with symmetric Bernoulli
margins) by solving the phase I linear program \eqref{phase I LP} and checking
whether the objective function attains zero. By the same procedure, the
sufficient condition shown in Proposition~\ref{prop:suff:cond:k:compatibility} can
also be checked for general $\kappa_G$ compatibility.

\subsection{Attainability of matrices of measures of concordance}\label{subsec: Attainability}
We now consider the attainability problem. We call a $\kappa_{G}$-compatible
matrix $P\in[-1,1]^{d\times d}$ \emph{$\kappa_{G}$-attainable} if one can
construct a random vector $\bm{X}=(X_1,\dots,X_d)$
such that $\kappa_{G}(\bm{X})=P$.  The proof of
Proposition~\ref{prop:suff:cond:k:compatibility} already indicates such a
construction principle for a $d$-dimensional random vector $\bm{X}$ such that,
for a given matrix $P\in \mathcal P_{d}^{\text{B}}(1/2)$, one has
$\kappa_{G}(\bm{X})=P$.
\begin{corollary}[$\kappa_{G}$-attainability of $P\in\mathcal{B}_d = \mathcal P_{d}^{\text{B}}(1/2)$]\label{corollary: kappa attainability}
  Let $P\in \mathcal P_{d}^{\text{B}}(1/2)$ and the representation $P=\sum_{l=1}^{2^{d-1}}\alpha_{l}\rho(\pi_l)$
  according to Theorem~\ref{theorem: characterization of bernoulli(1/2) correlations} be given.
  Then $P$ is $\kappa_{G}$-attainable by $\bm{X}=(X_1,\dots,X_d)$ defined by
  \begin{align}
    X_j = B_jU+ (1-B_j)(1-U),\quad j=1,\dots,d,\label{random vector attains kappa}
  \end{align}
  where $U\sim \U(0,1)$ and ${\bm B}=(B_1,\dots,B_d)$ is constructed as in Algorithm~\ref{algorithm: generate symmetric bernoulli}.
\end{corollary}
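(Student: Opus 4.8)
The plan is to combine the explicit construction from the proof of Proposition~\ref{prop:suff:cond:k:compatibility} with the mixture representation provided by Theorem~\ref{theorem: characterization of bernoulli(1/2) correlations}, so that the statement follows with essentially no new work.

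First I would check that the random vector $\bm{B}$ produced by Algorithm~\ref{algorithm: generate symmetric bernoulli} has correlation matrix exactly equal to $P$. By construction $\bm{B}$ has the mixture distribution $\sum_{l=1}^{2^{d-1}}\alpha_l\pi_l$. Every $\pi_l$ has $\Bern(1/2)$ margins, hence so does the mixture, so $\E(B_i)=1/2$ and $\Var(B_i)=1/4$ for each $i$ (and likewise under each $\pi_l$). Therefore $\Cov(B_i,B_j)=\E(B_iB_j)-1/4=\sum_l\alpha_l\bigl(\E_{\pi_l}(B_iB_j)-1/4\bigr)=\sum_l\alpha_l\Cov_{\pi_l}(B_i,B_j)$, and dividing through by the common variance $1/4$ gives $\rho(B_i,B_j)=\sum_l\alpha_l\,\rho_{\pi_l}(B_i,B_j)$, i.e.\ $\rho(\bm{B})=\sum_l\alpha_l\rho(\pi_l)=P$.

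Next I would recall from the proof of Proposition~\ref{prop:suff:cond:k:compatibility} that, with $U\sim\U(0,1)$ independent of $\bm{B}$, each $X_j=B_jU+(1-B_j)(1-U)$ is $\U(0,1)$-distributed. Thus $\bm{X}$ has continuous margins and its $j$th marginal distribution function $F_j$ is the identity on $[0,1]$, so $F_j(X_j)=X_j$ and $G^\i(F_j(X_j))=G^\i\bigl(B_jU+(1-B_j)(1-U)\bigr)=:Y_j$ --- precisely the vector $\bm{Y}$ considered there. The same conditioning argument (split on $\{B_i=B_j\}$ versus $\{B_i\neq B_j\}$, use $\rho(G^\i(U),G^\i(U))=1$ and $\rho(G^\i(U),G^\i(1-U))=-1$, which hold because $G$ is concordance inducing, together with $\P(B_i=B_j)=(\rho(B_i,B_j)+1)/2$) then yields $\rho(Y_i,Y_j)=\rho(B_i,B_j)$. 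Combining the two steps, $\kappa_G(X_i,X_j)=\rho\bigl(G^\i(F_i(X_i)),G^\i(F_j(X_j))\bigr)=\rho(Y_i,Y_j)=\rho(B_i,B_j)=P_{ij}$ for all $i,j$, i.e.\ $\kappa_G(\bm{X})=P$.

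There is no real obstacle here; the only step worth stating carefully is $\rho(\bm{B})=\sum_l\alpha_l\rho(\pi_l)$, since correlation is not an affine functional of the underlying law in general --- it becomes affine in the mixing weights only because all the $\pi_l$ share the same $\Bern(1/2)$ margins, which fixes the variances. One could alternatively bypass this marginal computation by appealing directly to Proposition~\ref{prop:linearity:kappaG}, or to the convexity in Corollary~\ref{convexity of K_G set}, at the level of $\kappa_G$. The other point to keep in mind is merely that $\kappa_G(\bm{X})$ requires $\bm{X}$ to have continuous margins, which is satisfied since each $X_j$ is uniform.
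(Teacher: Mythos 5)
Your argument is correct and follows essentially the same route as the paper, which gives no separate proof of the corollary and instead points directly to the construction $X_j=B_jU+(1-B_j)(1-U)$ from the proof of Proposition~\ref{prop:suff:cond:k:compatibility}, where $\rho(Y_i,Y_j)=\rho(B_i,B_j)$ is established by exactly the conditioning argument you invoke. The only thing you add beyond the paper's implicit reasoning is the explicit check that $\rho(\bm{B})=\sum_l\alpha_l\rho(\pi_l)=P$ for the mixture produced by Algorithm~\ref{algorithm: generate symmetric bernoulli} (valid because all $\pi_l$ share $\Bern(1/2)$ margins), which is a worthwhile detail to state but not a departure from the paper's approach.
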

Since $\mathcal B_d=\mathcal P_{d}^{\text{B}}(1/2)$, that is, the set of Blomqvist's
beta matrices coincide with the set of correlations of random vectors with
symmetric Bernoulli marginals, all matrices $P\in \mathcal B_d$ can be
attained by \eqref{random vector attains kappa}.

Next, for matrices of pairwise van der Waerden's coefficients $\zeta$,
any $\zeta$-compatible matrix is attainable by multivariate normal distribution.

\begin{corollary}[$\zeta$-attainability of $P\in\mathcal{W}_d=\mathcal{P}_d$]
\label{corollary: attainability of van der Waerden}
  Any matrix $P \in \mathcal W_d$ is attainable by the multivariate normal
  distribution with covariance matrix $P$.
\end{corollary}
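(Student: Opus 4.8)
The plan is to verify directly that the multivariate normal distribution with covariance (equivalently, correlation) matrix \(P\) has van der Waerden's coefficient matrix equal to \(P\). First I would observe that since \(P\in\mathcal{W}_d=\mathcal{P}_d\) by Proposition~\ref{prop:characterize:S:B:W}~\ref{prop:characterize:S:B:W:4}, \(P\) is a genuine correlation matrix, so \(\bm{X}=(X_1,\dots,X_d)\sim\N(\bm 0,P)\) is well-defined; moreover each \(X_j\sim\N(0,1)=\Phi\), so the margins \(F_j=\Phi\) are continuous and \(P\) indeed admits \(\bm X\) as a candidate realizing vector.

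Next I would recall from Example~\ref{ex:spearman:beta:waerden} that van der Waerden's coefficient is \(\kappa_G\) with \(G=\Phi\), i.e.\ \(\zeta(X_i,X_j)=\rho\bigl(\Phi^{-1}(F_i(X_i)),\Phi^{-1}(F_j(X_j))\bigr)\). Since here \(F_i=F_j=\Phi\), the transforms \(\Phi^{-1}(F_i(X_i))=\Phi^{-1}(\Phi(X_i))=X_i\) reduce to the identity on each coordinate. Hence \(\zeta(X_i,X_j)=\rho(X_i,X_j)\), and the right-hand side is exactly the \((i,j)\) entry of the covariance matrix \(P\) of the standard multivariate normal vector \(\bm X\). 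This gives \(\zeta(\bm X)=P\) entrywise, so \(P\) is \(\zeta\)-attainable by \(\N(\bm 0,P)\), which is the claim.

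There is essentially no obstacle here: the corollary is an immediate specialization of Proposition~\ref{prop:characterize:S:B:W}~\ref{prop:characterize:S:B:W:4} together with the identity \(\Phi^{-1}\circ\Phi=\mathrm{id}\), which works precisely because the normal distribution is self-consistent under the \(G\)-transform (the transform is the identity when the margins already equal \(G\)). The only point worth stating carefully is that all one-dimensional margins of \(\N(\bm 0,P)\) are standard normal, so that the rank-transform step collapses; this uses only that the diagonal of a correlation matrix is all ones. One may also remark that the same argument shows attainability by any elliptical distribution whose one-dimensional margins are of the relevant concordance-inducing type, but for the normal case the statement is as above.
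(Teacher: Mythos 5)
Your proposal is correct and matches the argument the paper leaves implicit: since $P$ is a correlation matrix, the margins of $\N(\bm 0,P)$ are standard normal, so $\Phi^{-1}\circ F_j=\mathrm{id}$ and the van der Waerden matrix of $\bm X\sim\N(\bm 0,P)$ reduces to its correlation matrix $P$. This is exactly the reasoning the paper relies on (see its remark that $\bm X = L\bm Z$ with $LL\T=P$ attains a given van der Waerden matrix), so there is nothing to add.
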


Finally, for Spearman's rho, $\rho_{\text{S}}$-attainability is not completely solved for dimensions $d\geq 3$.
If $P\in \mathcal P_{d}^{\text{B}}(1/2)$, $P$ is $\rho_{\text{S}}$-attainable by Corollary \ref{corollary: kappa attainability} for $d\geq 3$.
If $P\notin \mathcal P_{d}^{\text{B}}(1/2)$, $P$ is known to be $\rho_{\text{S}}$-attainable only when $d=3$ by the results in \cite{hurlimann2012trivariate}, \cite{hurlimann2014closed} and \cite{kurowicka2001conditional}, where \emph{universal copulas} are studied, that is,
explicitly constructed copulas with given correlation matrices.
For $d \geq 4$, such a universal copula is still unknown to the best of our knowledge.
Accordingly, a general $\rho_{\text{S}}$-compatible matrix $P$ is not known to be attainable when $d\geq 4$.

\section{Compatibility and attainability for block matrices}\label{sec:block:mat}
In this section, we study the compatibility and attainability of \emph{block
  matrices} $P$, that is, matrices containing homogeneous blocks (so blocks of
equal entries), possibly with ones on the diagonal. A special case of block
matrices are hierarchical matrices, which are introduced in Example~\ref{ex:hier:matrices}.
Block matrices naturally appear when clustering algorithms are applied to
matrices of measures of concordance or when (rather) sparse, partially
exchangeable hierarchical models are designed.

Although all the criteria introduced in Section~\ref{sec:compat} can be
directly applied to block correlation matrices, the corresponding computational
effort can be large, especially when $d$ is large. The comparably small number of
different entries in block or hierarchical matrices is especially attractive for
high-dimensional modeling and one expects more efficient ways to check
compatibility and attainability for such matrices.  Specifically, compatibility
and attainability for Spearman's rho matrices are in demand since, as discussed
in Section~\ref{sec:compat}, there is no method available to check
compatibility for $d\geq 10$, and to check attainability for $d\geq 4$.

\subsection{Definition and notations}
We consider the following symmetric matrix in $[-1,1]^{d\times d}$ with diagonal entries equal to one:
\begin{align}
  P =\begin{pmatrix}
    P_{11} & \cdots & P_{1S} \\
    \vdots & \ddots & \vdots \\
    P_{S1} & \cdots & P_{SS} \\
  \end{pmatrix},\quad\text{for}\ P_{s_1s_2}=
  \begin{cases}
      (1-\rho_{ss})I_{d_s} + \rho_{ss}J_{d_s},&\text{if}\ s_1 = s_2 = s,\\
      \rho_{s_1s_2}J_{d_{s_1}d_{s_2}},&\text{if}\ s_1\neq s_2,
    \end{cases}
  \label{block correlation structure}
\end{align}
where $I_{d_s}$ denotes the $d_s\times d_s$ identity matrix,
$J_{d_{s_1}d_{s_2}}=\bm{1}_{d_{s_1}}\bm{1}_{d_{s_2}}\T\in\IR^{d_{s_1}\times
  d_{s_2}}$ (for $\bm{1}_{d_s}=(1,\dots,1)\in\IR^{d_s}$) is the
$d_{s_1}\times d_{s_2}$ matrix of ones and $J_{d_s}=J_{d_sd_s}$.  We call a
matrix of the form~\eqref{block correlation structure} a \emph{block homogeneous
  matrix}.  For notational convenience, let
\begin{align*}
  \Gamma_d(a,b) = a I_{d} + b (J_d- I_{d})=(a-b)I_{d} + bJ_d
\end{align*}
which is also known as the $d$-dimensional \emph{compound symmetry matrix}.
With this notation, the matrices on the diagonal of $P$ in \eqref{block correlation structure}
can be written as $P_{ss}=\Gamma_{d_s}(1,\rho_{ss})$.

A matrix of the form~\eqref{block correlation structure} appears, for
example, as a correlation matrix of a random vector with homogeneous
correlations within blocks.  Let $\bm X = (X_1,\dots,X_d)$ be a $d$-dimensional
random vector which can be divided into $S$ such blocks or groups
\begin{align*}
  \bm{X}=(\bm{X}_1,\dots,\bm{X}_S)=(X_{11},\dots,X_{1d_1},\dots,X_{S1},\dots,X_{Sd_S}),
\end{align*}
where $d_s$ is the size of group $s\in\{1,\dots,S\}$.
In financial and insurance applications, the groups are often industry sectors, business sectors, regions, etc.
If we consider the case where the correlation between two random variables
depends only on the groups they belong to, then the resulting correlation matrix
of $\bm X$ is block homogeneous of the form~\eqref{block correlation structure}
where $\rho_{s_1s_2}$ represents the correlation coefficient within two
(possibly equal) groups $s_1$ and $s_2$.

When we call a matrix $P$ block homogeneous, it is a symmetric
$[-1,1]^{d\times d}$ matrix with diagonal entries equal to one, but not
necessarily a correlation matrix since positive definiteness of $P$ is not
assumed. Note that, for compound symmetry matrices, it is well-known that
$\Gamma_d(a,b)$ is positive definite if and only if $-a/(d-1) < b < a$.
Therefore, $P_{ss}$, $s=1,\dots,S$, is positive definite if and only if
$ -1/(d_s -1) < \rho_{ss} < 1$.
\begin{example}[Hierarchical matrices]\label{ex:hier:matrices}
  Consider the block homogeneous matrix
  \begin{align}
    P= \left(\ \ \begin{matrix}
        \cc{0.5} 1 & \cc{0.5} 0.4 & \cc{0.5} 0.4 & \cc{0.5} 0.4 & 0.1 & 0.1 & 0.1 & 0.1 & 0.1\\
        \cc{0.5} 0.4 & \cc{0.5} 1 & \cc{0.5} 0.4 & \cc{0.5} 0.4 & 0.1 & 0.1 & 0.1 & 0.1 & 0.1\\
        \cc{0.5} 0.4 & \cc{0.5} 0.4 & \cc{0.5} 1 & \cc{0.5} 0.4 & 0.1 & 0.1 & 0.1 & 0.1 & 0.1\\
        \cc{0.5} 0.4 & \cc{0.5} 0.4 & \cc{0.5} 0.4 & \cc{0.5} 1 & 0.1 & 0.1 & 0.1 & 0.1 & 0.1\\
        0.1 & 0.1 & 0.1 & 0.1 & \cc{0.65} 1 & \cc{0.65} 0.3 & \cc{0.65} 0.3 & 0.15 & 0.15\\
        0.1 & 0.1 & 0.1 & 0.1 & \cc{0.65} 0.3 & \cc{0.65} 1 & \cc{0.65} 0.3 & 0.15 & 0.15\\
        0.1 & 0.1 & 0.1 & 0.1 & \cc{0.65} 0.3 & \cc{0.65} 0.3 & \cc{0.65} 1 & 0.15 & 0.15\\
        0.1 & 0.1 & 0.1 & 0.1 & 0.15 & 0.15& 0.15 & \cc{0.8} 1 & \cc{0.8} 0.2\\
        0.1 & 0.1 & 0.1 & 0.1 & 0.15 & 0.15& 0.15 & \cc{0.8} 0.2 & \cc{0.8} 1\\
      \end{matrix}\ \ \right)\label{eq:hier:mat}
  \end{align}
  with $S=3$, $(d_1,d_2,d_3)=(4,3,2)$, $(\rho_{11},\rho_{22},\rho_{33},\rho_{12},\rho_{13},\rho_{23})=(0.4,0.3,0.2,0.1,0.1,0.15)$.
  This matrix can be described by a tree $T_P$ illustrated in Figure~\ref{fig:tree:hier:mat}.
  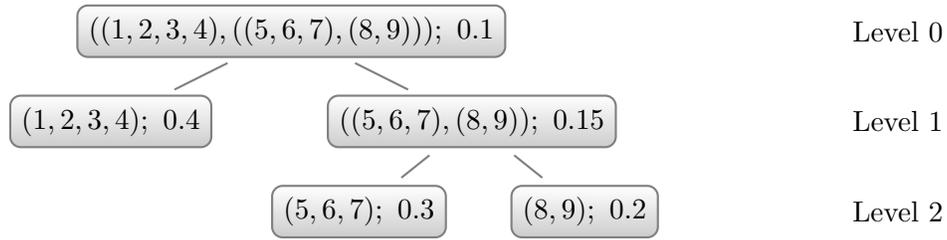
\begin{figure}[htbp]
    \centering
    \begin{tikzpicture}[
      grow = south,
      level 1/.style = {sibling distance = 48mm, level distance = 12mm},
      level 2/.style = {sibling distance = 30mm, level distance = 12mm},
      edge from parent/.style = {thick, draw = black!50},
      mygeneralstyle/.style = {rectangle, rounded corners, shade, top color = white,
        bottom color = black!20, draw = black!50, thick, inner sep = 1.5mm},
      mynodestyle/.style = {mygeneralstyle, minimum width = 12mm, minimum height = 6mm},
      myleafstyle/.style = {mygeneralstyle, minimum width =  7mm, minimum height = 5mm}
      ]
      \node[mynodestyle](nodelevel0){$((1,2,3,4),((5,6,7),(8,9)));\ 0.1$}
      child[edge from parent/.style = {thick, draw = black!50, shorten >= 1.5mm, shorten <= 1.5mm}]{
        node[mynodestyle]{$(1,2,3,4);\ 0.4$}{
        }
      }
      child[edge from parent/.style = {thick, draw = black!50, shorten >= 1.5mm, shorten <= 1.5mm}]{
        node[mynodestyle](nodelevel1){$((5,6,7),(8,9));\ 0.15$}{
          child[edge from parent/.style = {thick, draw = black!50, shorten >= 1.5mm, shorten <= 1.5mm}]{
            node[mynodestyle]{$(5,6,7);\ 0.3$}{
            }
          }
          child[edge from parent/.style = {thick, draw = black!50, shorten >= 1.5mm, shorten <= 1.5mm}]{
            node[mynodestyle](nodelevel2){$(8,9);\ 0.2$}{
            }
          }
        }
      };
      \node[right=44.7mm of nodelevel0]{Level~0};
      \node[right=30mm of nodelevel1]{Level~1};
      \node[right=24.4mm of nodelevel2]{Level~2};
    \end{tikzpicture}
    \caption{Tree representation $T_P$ of the hierarchical correlation matrix $P$ in \eqref{eq:hier:mat}.}
    \label{fig:tree:hier:mat}
  \end{figure}
  For the tree $T_P$, denote by $v_{lm}$ the $m$th node (counted from left) at
  level $l\in \{0,1,2\}$.  The \emph{leaves} (that is, the terminal nodes)
  $v_{11},v_{21}$ and $v_{22}$ represent the groups of variable indices
  $(1,2,3,4)$, $(5,6,7)$ and $(8,9)$, respectively.  Nodes $v_{21}$ and $v_{22}$
  are connected by a node $v_{12}$, and $v_{11}$ and $v_{12}$ are connected by a
  node $v_{01}$.  To each vertex $v=v_{01},v_{11},v_{12},v_{21},v_{22}$ (in the
  set of vertices denoted by
  $\mathcal V = \{v_{01},v_{11},v_{12},v_{21},v_{22}\}$), a single number
  $\rho_{v}=0.4, 0.3, 0.2, 0.15, 0.1$ is attached, respectively. The vertex
  $v_{01}$ at the lowest level is called \emph{root}; if two nodes $v$ and $v'$
  are connected and $v$ is at lower level than $v'$, then $v$ is a
  \emph{parent} of $v'$ and $v'$ is a \emph{child} of $v$. A node $v$ is called
  \emph{descendant} of another node $v'$ if $v$ is in the shortest path from
  $v'$ to the root of the tree; note that each single node is regarded as a
  descendant of itself.  Finally, for a pair of two nodes $(v,v')$, the
  \emph{lowest common ancestor} is the lowest node that has both $v$ and $v'$ as
  descendants; when $v=v'$, the lowest common ancestor is $v$
  itself.

  With these notions, the block matrix $P$ is recovered from the tree $T_P$ by
  defining a matrix with diagonal entries equal to $1$ and the $(i,j)$-entry,
  for $i\neq j$, equal to the number attached to the descendant of $(v_i, v_j)$
  where $v_i$ and $v_j$ are the leaves of groups of variable indices containing
  $i$ and $j$, respectively. If a block homogeneous correlation matrix $P$
  admits such a tree representation $T_P$, we call $P$ \emph{hierarchical
    matrix} and $T_P$ the corresponding \emph{hierarchical tree}. The
  matrix~\eqref{eq:hier:mat} is thus a hierarchical matrix with corresponding
  tree given in Figure~\ref{fig:tree:hier:mat}.
\end{example}

\subsection{Positive (semi-)definiteness}
By Corollary~\ref{corollary:upper:lower:bounds}, positive (semi-)definiteness is a necessary condition for compatibility
of %
matrices of transformed rank correlation coefficients including Spearman's rho,
Blomqvist's beta and van der Wearden's coefficient. In the case of van der
Wearden's coefficient, it is even sufficient for
compatibility. If a matrix is block homogeneous,
it turns out to suffice to check positive semi-definiteness of an $S\times S$ matrix,
see Theorem~\ref{theorem: iff condition for positive semi-definiteness of block correlation matrix} below.
This result can lead to a significant reduction in the computational effort.

\begin{definition}[Block average map]
Let $P$ be a block homogeneous matrix of form \eqref{block correlation structure}.
The \emph{block average map} $P\mapsto \phi(P)$ for $P=(\rho_{ij})\in\IR^{d\times d}$ is defined by
\begin{align*}
  \phi(P)=\begin{pmatrix}
    \tilde \rho_{11} & \rho_{12} & \cdots & \rho_{1S}\\
    \rho_{21} & \ddots & \ddots & \vdots\\
    \vdots &  \ddots & \ddots  & \rho_{S-1\,S}\\
    \rho_{S1} & \cdots & \rho_{S-1\,S} & \tilde \rho_{SS}
    \end{pmatrix}\in\IR^{S\times S},\quad
    \tilde \rho_{ss} = \frac{1+(d_s - 1)\rho_{ss}}{d_s},\ s=1,\dots,S.
\end{align*}
\end{definition}

The block average map $\phi$ allows one to
collapse block matrices (to ``ordinary'' matrices).
If $\bm{X} = (X_1,\dots,X_d)$ is a
random vector with $\E(\bm{X}) = \bm 0$ and $\Cov(\bm{X})=P$ where $P$ is as in
\eqref{block correlation structure}, then $\bm{Y} = (\overline Y_1,\dots,\overline Y_S)$
defined by the group averages $\overline Y_s = \frac{1}{d_s}\sum_{j=1}^{d_s}X_{sj}$ has
covariance matrix $\phi(P)$, that is, $\Cov(\bm{Y}) = \phi(P)$.
\cite{roustant2017validity,huang2010correlation} showed that it suffices
to check positive (semi-)definiteness of the matrix $\phi(P)\in\IR^{S\times S}$
to obtain positive (semi-)definiteness of $P\in\IR^{d\times d}$.

\begin{theorem}[Characterization of positive (semi-)definiteness of block matrices]\label{theorem: iff condition for positive semi-definiteness of block correlation matrix}
  Let $P\in \IR^{d\times d}$ be a block matrix as in \eqref{block correlation
    structure}. Then $P$ is positive (semi-)definite if and only if $\phi(P)$ is
  positive (semi-)definite.
\end{theorem}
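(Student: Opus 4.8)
The plan is to block-diagonalize $P$ by decomposing each group's coordinate space $\IR^{d_s}$ into the line spanned by $\bm{1}_{d_s}$ and its orthogonal complement. For $\bm{x}=(\bm{x}_1,\dots,\bm{x}_S)\in\IR^d$ with $\bm{x}_s\in\IR^{d_s}$, set $\bar{x}_s=\frac{1}{d_s}\bm{1}_{d_s}\T\bm{x}_s$ and $\bm{x}_s^{\perp}=\bm{x}_s-\bar{x}_s\bm{1}_{d_s}$, so $\bm{1}_{d_s}\T\bm{x}_s^{\perp}=0$ and $\|\bm{x}_s\|^2=d_s\bar{x}_s^2+\|\bm{x}_s^{\perp}\|^2$. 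Using $P_{ss}\bm{x}_s=(1-\rho_{ss})\bm{x}_s+\rho_{ss}(\bm{1}_{d_s}\T\bm{x}_s)\bm{1}_{d_s}$ and $P_{s_1s_2}\bm{x}_{s_2}=\rho_{s_1s_2}(\bm{1}_{d_{s_2}}\T\bm{x}_{s_2})\bm{1}_{d_{s_1}}$ for $s_1\neq s_2$, together with $1+(d_s-1)\rho_{ss}=d_s\tilde{\rho}_{ss}$, a direct computation gives
\begin{align*}
  \bm{x}\T P\bm{x}=\sum_{s=1}^{S}(1-\rho_{ss})\,\|\bm{x}_s^{\perp}\|^2+\bm{w}\T\phi(P)\,\bm{w},\qquad \bm{w}=(d_1\bar{x}_1,\dots,d_S\bar{x}_S)\in\IR^{S}.
\end{align*}
Equivalently, the orthogonal projection $\Pi=\diag\bigl(\tfrac{1}{d_1}J_{d_1},\dots,\tfrac{1}{d_S}J_{d_S}\bigr)$ onto the subspace of block-constant vectors commutes with $P$, so $P=\Pi P\Pi+(I-\Pi)P(I-\Pi)$, where $\Pi P\Pi$ restricted to $\ran\Pi$ has matrix $D^{1/2}\phi(P)D^{1/2}$ with $D=\diag(d_1,\dots,d_S)$, and $(I-\Pi)P(I-\Pi)$ restricted to $\ran(I-\Pi)$ equals $\bigoplus_{s=1}^{S}(1-\rho_{ss})I_{d_s-1}$.

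Both directions of the semi-definite equivalence then follow immediately from the identity. If $\phi(P)\succeq 0$, then, since $P\in[-1,1]^{d\times d}$ forces $1-\rho_{ss}\ge 0$ for all $s$, the right-hand side is nonnegative for every $\bm{x}$, hence $P\succeq 0$. Conversely, if $P\succeq 0$, restrict to $\bm{x}$ constant on each block, i.e.\ $\bm{x}_s^{\perp}=\bm{0}$ for all $s$; then $\bm{x}\T P\bm{x}=\bm{w}\T\phi(P)\bm{w}\ge 0$, and $\bm{w}$ ranges over all of $\IR^S$ as $\bm{x}$ varies, so $\phi(P)\succeq 0$.

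For the definite case the same identity with strict inequalities shows that $P\succ 0$ holds exactly when $\phi(P)\succ 0$ and $1-\rho_{ss}>0$ for every group of size at least two (using that $\bm{x}=\bm{0}$ is equivalent to $\bm{w}=\bm{0}$ together with $\bm{x}_s^{\perp}=\bm{0}$ for all $s$); the extra condition $\rho_{ss}<1$ is automatic for a proper block correlation matrix and is in particular implied by $P\succ 0$ (take $\bm{x}$ supported in block $s$ with only $\bm{x}_s^{\perp}\neq\bm{0}$), which yields the stated equivalence.

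No step presents a genuine obstacle; the one place needing care is the bookkeeping behind the quadratic-form identity — applying the Pythagorean split $\|\bm{x}_s\|^2=d_s\bar{x}_s^2+\|\bm{x}_s^{\perp}\|^2$ inside the within-block terms and collecting the rank-one cross-terms $\rho_{s_1s_2}(\bm{1}_{d_{s_1}}\T\bm{x}_{s_1})(\bm{1}_{d_{s_2}}\T\bm{x}_{s_2})$ from the off-diagonal blocks into the single form $\bm{w}\T\phi(P)\bm{w}$ — or, in the operator formulation, checking that $\Pi$ commutes with $P$ and that the two diagonal pieces have the asserted matrices.
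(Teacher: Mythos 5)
Your argument is correct, and it is a genuinely different route from the paper's: the paper simply defers to \cite{huang2010correlation} and \cite{roustant2017validity}, whereas you give a self-contained elementary proof. I checked the central identity: with $t_s=\bm{1}_{d_s}\T\bm{x}_s=d_s\bar{x}_s$ one gets $\bm{x}_s\T P_{ss}\bm{x}_s=(1-\rho_{ss})\|\bm{x}_s^{\perp}\|^2+\tilde\rho_{ss}t_s^2$ and $\bm{x}_{s_1}\T P_{s_1s_2}\bm{x}_{s_2}=\rho_{s_1s_2}t_{s_1}t_{s_2}$, which sums exactly to your formula with $\bm{w}=(t_1,\dots,t_S)$; the operator version also checks out ($\Pi P=P\Pi$, the cross terms vanish because $\Pi^2=\Pi$, and $\Pi P\Pi$ on $\ran\Pi$ is $D^{1/2}\phi(P)D^{1/2}$, congruent to $\phi(P)$). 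What your approach buys is transparency: the decomposition exhibits the full spectrum of $P$ as the eigenvalues of $D^{1/2}\phi(P)D^{1/2}$ together with $1-\rho_{ss}$ with multiplicity $d_s-1$, which gives more than the stated theorem (e.g.\ it immediately re-proves the paper's remark that $\Gamma_{d}(1,\rho)$ is positive definite iff $-1/(d-1)<\rho<1$, and it explains why the group-average random vector $\bm{Y}$ has covariance $\phi(P)$).

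One caveat you should not brush aside: your own identity shows that the positive-\emph{definite} equivalence requires the side condition $\rho_{ss}<1$ for every block with $d_s\ge 2$, and this is \emph{not} implied by $\phi(P)\succ 0$. The theorem as literally stated (for a block matrix of form \eqref{block correlation structure}, which only requires entries in $[-1,1]$ and ones on the diagonal) has the degenerate counterexample $S=1$, $d_1=2$, $\rho_{11}=1$: then $P=J_2$ is singular while $\phi(P)=(1)$ is positive definite. Your parenthetical ``automatic for a proper block correlation matrix'' is only valid under the additional hypothesis that $P$ is nonsingular or that $\rho_{ss}<1$ is assumed, so the implication $\phi(P)\succ 0\Rightarrow P\succ 0$ should be stated with that hypothesis made explicit. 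The semi-definite equivalence, and the direction $P\succ 0\Rightarrow\phi(P)\succ 0$, are unconditionally correct as you wrote them.
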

\begin{proof}
  See \cite{huang2010correlation} and \cite{roustant2017validity}.
\end{proof}

\begin{example}[Positive definiteness of a hierarchical matrix]\label{example: Positive definiteness of a hierarchical matrix}
  Consider $P$ as in \eqref{eq:hier:mat}, so $S=3$, $d=9$, $(d_1,d_2,d_3)=(4,3,2)$ with block average map given by
  \begin{align*}
    \phi(P)&=\begin{pmatrix}
        (1+(d_1-1)\rho_{11})/d_1 & \rho_{12} & \rho_{13}\\
        \rho_{21} & (1+(d_2-1)\rho_{22})/d_2 & \rho_{23}\\
        \rho_{31} & \rho_{32} & (1+(d_3-1)\rho_{33})/d_3\\
      \end{pmatrix}\\
           &=\begin{pmatrix}
             \frac{1+(4-1) 0.4}{4} & 0.1 & 0.1\\
             0.1 & \frac{1+(3-1)0.3}{3} & 0.15\\
             0.1 & 0.15 & \frac{1+(2-1)0.2}{2}\\
           \end{pmatrix}
    =\begin{pmatrix}
      0.55 & 0.1 & 0.1\\
      0.1 & 0.5\overline{3} & 0.15\\
      0.1 & 0.15 & 0.6\\
    \end{pmatrix}.
  \end{align*}
    One can easily check that $\phi(P)$ is positive definite.
    By Theorem~\ref{theorem: iff condition for positive semi-definiteness of block
    correlation matrix}, $P$ is thus positive definite.
\end{example}

\subsection{Block Cholesky decomposition}
The \emph{Cholesky decomposition} of a positive definite (positive
semi-definite) matrix $P \in \mathcal P_d$ is $P=LL\T$ for a lower triangular
matrix $L$ with positive (non-negative) diagonal elements, which is called the
\emph{Cholesky factor} of $P$. Such a decomposition of $P$ exists if and only if
$P$ is positive (semi-)definite and so can be used to check the latter property
computationally.

Cholesky decompositions are of utmost importance in various areas of statistics.
In quantitative risk management, they are frequently utilized to construct
multivariate elliptical distributions. For example, once the Cholesky factor $L$
of $P$ is computed, the $d$-dimensional random vector $\bm{X}=L\bZ$ satisfies
$\Cov(\bm{X})=LL\T=P$ for $\bZ\sim \N_d(0,I_d)$.  This $\bm X$ thus attains a
given matrix $P$ of van der Waerden's coefficients; see
Corollary~\ref{corollary: attainability of van der Waerden}. For building
hierarchical dependence models after estimating groups of homogeneous models or
after applying clustering algorithms (which naturally lead to groups of
variables), one often considers block homogeneous correlation matrices or hierarchical matrices (see Example~\ref{ex:hier:matrices}).  We will now
turn to the question how Cholesky factors of such matrices look like and can be
computed more efficiently than in the classical way.

\begin{proposition}[Cholesky factor of block matrices]\label{proposition: Cholesky factor of block matrices}
For a $d\times d$ block homogeneous correlation matrix $P$ of form \eqref{block correlation structure}, its Cholesky factor $L$ is of the form
\begin{align*}
  L = \begin{pmatrix}
    L_{11} & O & \cdots & O \\
    L_{21} & L_{22} & \ddots & O \\
    \vdots & \vdots & \ddots & \vdots\\
    L_{S1} & L_{S2} & \cdots & L_{SS} \\
  \end{pmatrix}
\end{align*}
where $O=(0)$ represents a block of zeros and, for $s=1,\dots,S$, the diagonal matrices are
\begin{align*}
  L_{ss}=\begin{pmatrix}
    \tilde  l_{ss,1} & 0 & 0 & \cdots & 0\\
    l_{ss,1} & \tilde  l_{ss,2} &0 &   & \vdots\\
    l_{ss,1} & l_{ss,2} & \tilde l_{ss,3}& \ddots & \vdots\\
    \vdots & \vdots & \vdots & \ddots &0\\
    l_{ss,1} & l_{ss,2} & l_{ss,3} & \cdots & \tilde l_{ss,d_s}\\
  \end{pmatrix}\in\IR^{d_s\times d_s}
\end{align*}
for some $\tilde  l_{ss,k},\ k=1,\dots,S$ and $ l_{ss,k},\ k=1,\dots,S-1$, and the off-diagonal matrices are
\begin{align*}
  L_{s+m, s} = (c_{sm,1}\bm{1}_{d_{s+m}},\dots,c_{sm,d_s}\bm{1}_{d_{s+m}})\in\IR^{d_{s+m}\times d_s},\quad m=1,\dots,S-s
\end{align*}
for some $(c_{sm,1},\dots,c_{sm,d_{s}})$.
\end{proposition}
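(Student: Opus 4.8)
The plan is to invoke uniqueness of the Cholesky factorization: for positive definite $P$ there is a unique lower triangular $L$ with positive diagonal and $LL\T=P$, so it suffices to produce \emph{one} matrix $L$ of the asserted block shape with $LL\T=P$ and positive diagonal, and that $L$ is then the Cholesky factor. The positive semi-definite case follows by continuity, approximating $P$ by positive definite block homogeneous matrices $(1-\eps)P+\eps I_d$ and passing to a limit along a convergent subsequence of the corresponding Cholesky factors, noting that the asserted structural form is preserved under limits. I would argue by induction on the number of blocks $S$, and it is convenient to prove the slightly more general statement in which each diagonal block $P_{ss}$ is allowed to be an arbitrary positive definite compound symmetry matrix $\Gamma_{d_s}(a_s,b_s)$ rather than only $\Gamma_{d_s}(1,\rho_{ss})$ (the off-diagonal blocks still being constant multiples of $J$, and $P$ still assumed positive definite); this generalization is forced because the Schur complements arising in the induction have exactly this form, while the shape asserted for $L$ is unchanged.

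\emph{Base case $S=1$.} Here $P=\Gamma_d(a,b)$, and I would run the ordinary row-by-row Cholesky recursion. Because $\Gamma_d(a,b)$ has only the two distinct values $a$ (diagonal) and $b$ (off-diagonal), an easy induction on the column index $k$ shows that all entries of $L$ strictly below the diagonal in column $k$ equal a common value $l_k$ with $l_k\tilde l_k=b-\sum_{j<k}l_j^2$, where $\tilde l_k^2=a-\sum_{j<k}l_j^2$ is the $k$th diagonal entry squared. These are precisely the Cholesky pivots of the leading submatrix $\Gamma_k(a,b)$ and are positive because $\Gamma_k(a,b)$, as a principal submatrix of the positive definite $\Gamma_d(a,b)$, is positive definite (recall $\Gamma_d(a,b)$ is positive definite iff $-a/(d-1)<b<a$). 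This yields an $L$ of exactly the form claimed for a diagonal block $L_{ss}$.

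\emph{Inductive step.} Write $P=\left(\begin{smallmatrix}A & B\T\\ B & D\end{smallmatrix}\right)$ with $A=P_{11}=\Gamma_{d_1}(a_1,b_1)$ and $D$ the block homogeneous matrix on blocks $2,\dots,S$. Since $P_{s1}=\rho_{s1}\bm{1}_{d_s}\bm{1}_{d_1}\T$, we have $B=\bm{v}\,\bm{1}_{d_1}\T$ with $\bm{v}=(\rho_{21}\bm{1}_{d_2}\T,\dots,\rho_{S1}\bm{1}_{d_S}\T)\T$. The block Cholesky recursion gives $L_{11}=\mathrm{chol}(A)$ (handled by the base case), the first sub-diagonal block column $L_{\cdot 1}=B L_{11}^{-\top}=\bm{v}\,(L_{11}^{\i}\bm{1}_{d_1})\T=:\bm{v}\bm{w}\T$, and the Schur complement $D-L_{\cdot 1}L_{\cdot 1}\T=D-\|\bm{w}\|^2\bm{v}\bm{v}\T=D-\gamma\,\bm{v}\bm{v}\T$ with $\gamma=\bm{1}_{d_1}\T A^{\i}\bm{1}_{d_1}$. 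Since $\bm{v}\bm{v}\T$ has $(s_1,s_2)$-block $\rho_{s_11}\rho_{s_21}J_{d_{s_1}d_{s_2}}$, the Schur complement again lies in the general class, with diagonal blocks $\Gamma_{d_s}(a_s-\gamma\rho_{s1}^2,\,b_s-\gamma\rho_{s1}^2)$ (positive definite, being principal submatrices of the positive definite Schur complement of a positive definite matrix) and off-diagonal blocks $(\rho_{s_1s_2}-\gamma\rho_{s_11}\rho_{s_21})J$. Moreover $L_{\cdot 1}=\bm{v}\bm{w}\T$ has $k$th column $w_k\bm{v}$, whose block-$s$ piece is $(w_k\rho_{s1})\bm{1}_{d_s}$, which is exactly the asserted form for $L_{s1}$. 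Applying the induction hypothesis to the Schur complement produces the remaining blocks $L_{ss}$ ($s\ge 2$) and $L_{s+m,s}$ ($s\ge 2$) of the required shape, completing the induction.

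The main work is the base-case structural computation and, in the inductive step, verifying that $D-\gamma\bm{v}\bm{v}\T$ really does stay in the class: one must observe that $A^{\i}$ is again a compound symmetry matrix, so that $\gamma=\bm{1}_{d_1}\T A^{\i}\bm{1}_{d_1}$ is a scalar and $BA^{\i}B\T=\gamma\bm{v}\bm{v}\T$, and then read off the block entries. These steps are routine but must be done carefully, and it is positive definiteness of $P$ — together with the standard facts that principal submatrices and Schur complements of positive definite matrices are positive definite (alternatively Theorem~\ref{theorem: iff condition for positive semi-definiteness of block correlation matrix}) — that guarantees all the recursions are well defined with positive pivots.
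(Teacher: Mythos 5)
Your proof is correct and follows essentially the same route as the paper's: compute the Cholesky factor of the compound-symmetry block $P_{11}$ (yielding constant sub-diagonal columns), obtain the sub-block column as $P_{s1}L_{11}^{-\top}$, and verify that the Schur complement $D-\gamma\bm{v}\bm{v}\T$ with $\gamma=\bm{1}_{d_1}\T P_{11}^{\i}\bm{1}_{d_1}$ stays in the class of block matrices with compound-symmetric diagonal blocks, then iterate. Your only additions are the explicit appeal to uniqueness of the Cholesky factor and the limiting argument for the positive semi-definite case, both of which the paper leaves implicit.
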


The following algorithm computes the Cholesky factors of a given block
homogeneous correlation matrix; its proof thus shows Proposition~\ref{proposition: Cholesky
  factor of block matrices}.
\begin{algorithm}[Cholesky decomposition for block matrices]
  \label{algorithm: cholesky decomposition for block matrices}
  \begin{enumerate}
  \item Set $\overline P(1)=P$.
  \item For $s=1,\dots,S$ and $\overline P(s)$ of the form
    \begin{align}\label{block correlation matrix at iteration s}
      {\overline P}(s) =  \begin{pmatrix}
        P_{1,1}^{(s)} & \cdots & P_{1, S-s+1}^{(s)} \\
        \vdots & \ddots & \vdots \\
        P_{S-s+1, 1}^{(s)} & \cdots & P_{S-s+1,S-s+1}^{(s)}\\
      \end{pmatrix},
    \end{align}
    where, for $s_1,s_2 \in \{1,\dots,S-s+1\}$,
    \begin{align*}
      P_{s_1,s_2}^{(s)}=
      \begin{cases}
        \Gamma_{d_{s+t-1}}(\rho_{t}^{(s)},\rho_{t,o}^{(s)}),&\text{if}\ s_1 = s_2=t \in \{1,\dots,S-s+1\},\\
        \rho_{s_1,s_2}^{(s)}J_{d_{s+s_1-1}d_{s+s_2-1}},&\text{if}\ s_1\neq s_2,
      \end{cases}
    \end{align*}
    for some diagonal entries of diagonal blocks $\rho_{t}^{(s)}$, off-diagonal entries of diagonal blocks $\rho_{t,o}^{(s)}$, and entries of off-diagonal blocks $\rho_{s_1,s_2}^{(s)}$,
    do the following.
    \begin{enumerate}
    \item Set
      \begin{align}\label{Cholesky factor of CS matrix}
        L_{ss}=\begin{pmatrix}
          \tilde  l_{ss,1} & 0 & 0 & \cdots & 0\\
          l_{ss,1} & \tilde  l_{ss,2} &0 &   & \vdots\\
          l_{ss,1} & l_{ss,2} & \tilde l_{ss,3}& \ddots & \vdots\\
          \vdots & \vdots & \vdots & \ddots &0\\
          l_{ss,1} & l_{ss,2} & l_{ss,3} & \cdots & \tilde l_{ss,d_s}\\
        \end{pmatrix}\in\IR^{d_s\times d_s},
      \end{align}
      where
      \begin{align}\label{l-equations}
        \tilde l_{ss,j} = \sqrt{\rho_{1}^{(s)}-\sum_{k=1}^{j-1}l_{ss,k}^2},\quad
        \text{and}\quad l_{ss,j} = \frac{1}{\tilde l_{ss,j}}\biggl(\rho_{1,o}^{(s)} - \sum_{k=1}^{j-1}l_{ss,k}^2\biggr),\quad j=1,\dots,d_s.
      \end{align}
    \item If $s < S$, set, for $m=1,\dots,S-s$,
      \begin{align*}
        L_{s+m, s} = (c_{sm,1}\bm{1}_{d_{s+m}},\dots,c_{sm,d_s}\bm{1}_{d_{s+m}})\in\IR^{d_{s+m}\times d_s},
      \end{align*}
      where $(c_{sm,1},\dots,c_{sm,d_s})$ can be sequentially determined via
      \begin{align}\label{c-equations}
        c_{sm,j}\tilde l_{ss,j} + \sum_{k=1}^{j-1}c_{sm,k}l_{ss,k} = \rho_{m+1,1}^{(s)},\quad j=1,\dots,d_s.
      \end{align}
    \item If $s < S$, set ${\overline P}(s+1)$ to be of form \eqref{block correlation matrix at iteration s} with
      \begin{align*}
        \rho_{t}^{(s+1)} &= \rho_{t+1}^{(s)}  + \frac{ d_s (\rho_{t+1,1}^{(s)})^2 }{\rho_1^{(s)}+(d_s-1)\rho_{1o}^{(s)}},\\
        \rho_{t,o}^{(s+1)} &= \rho_{t+1,o}^{(s)}+\frac{ d_s (\rho_{t+1,1}^{(s)})^2 }{\rho_1^{(s)}+(d_s-1)\rho_{1o}^{(s)}}
      \end{align*}
      for $t \in \{1,\dots,S-s\}$ and
      \begin{align*}
        \rho_{s_i,s_j}^{(s+1)} = \rho_{s_i+1,s_j+1}^{(s)} + \frac{ d_s \rho_{s_i+1,1}^{(s)}\rho_{s_j+1,1}^{(s)} }{\rho_1^{(s)}+(d_s-1)\rho_{1o}^{(s)}}
      \end{align*}
      for $s_1,s_2 \in \{1,\dots,S-s\}$.
    \end{enumerate}
  \item Return the Cholesky factor $L$ of $P$ where
    \begin{align*}
      L =  \begin{pmatrix}
        L_{11} & O & \cdots & O \\
        L_{21} & L_{22} & \ddots & O \\
        \vdots & \vdots & \ddots & \vdots\\
        L_{S1} & L_{S2} & \cdots & L_{SS} \\
      \end{pmatrix}.
    \end{align*}
  \end{enumerate}
\end{algorithm}
\begin{proof}
Consider the first iteration $s=1$.
Let
\begin{align*}
  L_{11}=\sqrt{P_{11}},\quad L_{s1}=P_{s1} (L_{11}\T)^{-1},\quad s=2,\dots,d_{S}.
\end{align*}
Since $P_{11}=\Gamma_{d_1}(1,\rho_{11})$ is a compound symmetry matrix, solving the equation $L_{11} L_{11}\T=P_{11}$ yields that $L_{11}$ is of the form
\begin{align*}
  L_{11}=\begin{pmatrix}
    1 & 0 & 0 & \cdots & 0\\
    l_{11,1} & \tilde  l_{11,2} &0 &   & \vdots\\
    l_{11,1} & l_{11,2} & \tilde l_{11,3}& \ddots & \vdots\\
    \vdots & \vdots & \vdots & \ddots &0\\
    l_{11,1} & l_{11,2} & l_{11,3} & \cdots & \tilde l_{11,d_1}\\
    \end{pmatrix}\in\IR^{d_1\times d_1},
\end{align*}
where
\begin{align*}
\tilde l_{11,j} = \sqrt{1-\sum_{k=1}^{j-1}l_{11,k}^2},\quad
  \text{and}\quad l_{11,j} = \frac{1}{\tilde l_{11,j}}\biggl(\rho_{11} - \sum_{k=1}^{j-1}l_{11,k}^2\biggr),\quad j=1,\dots,d_1.
\end{align*}
Note that all off-diagonal components in the same column are equal.
This set of equations can be solved sequentially for $j=1,\dots,d_1$.
For $s=2,\dots,d_{S}$, since $P_{s1}=\rho_{s1}J_{d_sd_1}$,
$L_{s1}=\rho_{s1}J_{d_sd_1}(L_{11}\T)^{-1}$ can be written as
\begin{align*}
  L_{s1} = (c_{s1,1}\bm{1}_{d_s},\dots,c_{s1,d_1}\bm{1}_{d_s})\in\IR^{d_s\times d_1},\quad s=2,\dots,S,
\end{align*}
where $(c_{s1,1},\dots,c_{s1,d_1})$ can be sequentially determined via
\begin{align*}
  c_{s1,j}\tilde l_{11,j} + \sum_{k=1}^{j-1}c_{s1,k}l_{11,k} = \rho_{s1},\quad j=1,\dots,d_1.
\end{align*}
Let $P_{-(1:d_1)}$ be the submatrix of $P$ obtained by removing the first $d_1$ rows and
columns.  Let $L_{-1}$ be the Cholesky factor of
\begin{align*}
\overline P(1) = P_{-(1:d_1)} - (P_{21},\dots,P_{d_s1})\T P_{11}^{-1}(P_{21}\T,\dots,P_{d_S1}\T).
\end{align*}
Then $LL\T = P$ for the lower triangle matrix
\begin{align*}
  L=\begin{pmatrix}
      L_{11} & O & \cdots & O \\
      L_{21} &  & &  \\
      \vdots &  & L_{-1} &  \\
      L_{S1} &  &  &  \\
    \end{pmatrix}.
\end{align*}

We now show that $\overline P(1)$ is a %
block matrix with diagonal blocks equal to compound symmetric matrices and off-diagonal blocks equal to constant matrices.
Since
\begin{align*}
  &\phantom{{}={}}(P_{21},\dots,P_{S1})\T P_{11}^{-1}(P_{21}\T,\dots,P_{d_S1}\T) \\
  &=(\rho_{21}J_{d_2d_1},\dots,\rho_{S1}J_{d_Sd_1})\T P_{11}^{-1}(\rho_{21}J_{d_2d_1}\T,\dots,\rho_{S1}J_{d_Sd_1}\T),
\end{align*}
its $(i,j)$-block for $i,j \in \{1,\dots,S-1\}$ is given by
\begin{align*}
  \rho_{i+1,1}\rho_{j+1,1}J_{d_{i+1,}d_1}P^{-1}_{11}J_{d_{j+1}d_1}\T = \rho_{i+1,1}\rho_{j+1,1}\bm{1}_{d_{i+1}}\bm{1}_{d_1}\T P^{-1}_{11}\bm{1}_{d_1}\bm{1}_{d_{j+1}}\T.
\end{align*}
Since $P_{11} = \Gamma_{d_1}(1,\rho_{11})$, we have that
\begin{align*}
  P_{11}\bm{1}_{d_1} = (1+(d_1-1)\rho_{11})\bm{1}_{d_1}.
\end{align*}
Moreover,
\begin{align*}
  \bm{1}_{d_1}\T P_{11}^{-1}P_{11} \bm{1}_{d_1} = \bm{1}_{d_1}\T\bm{1}_{d_1} = d_1.
\end{align*}
Putting these equalities together, we obtain that
\begin{align*}
  \bm{1}_{d_1}\T P_{11}^{-1}\bm{1}_{d_1}=\frac{d_1}{1+(d_1-1)\rho_{11}}.
\end{align*}
Therefore, the $(i,j)$-block of the second term of $\overline P_{-1}$ is given by
\begin{align*}
  \rho_{i+1,1}\rho_{j+1,1}J_{d_{i+1}d_1}P^{-1}_{11}J_{d_{j+1}d_1}\T
  =\frac{ d_1 \rho_{i+1,1}\rho_{j+1,1} }{1+(d_1-1)\rho_{11}}J_{d_{i+1}d_{j+1}}.
\end{align*}
Consequently, $\overline P(1)$ is a block matrix with $(i,i)$th block given by
\begin{align*}
  &\phantom{{}={}}\Gamma_{d_{i+1}}(1,\rho_{i+1,i+1}) + \frac{ d_1 \rho_{i+1,1}^2 }{1+(d_1-1)\rho_{11}}J_{d_{i+1}} \\
  &= \Gamma_{d_{i+1}}\biggl(1  + \frac{ d_1 \rho_{i+1,1}^2 }{1+(d_1-1)\rho_{11}},\ \rho_{i+1,i+1}+\frac{ d_1 \rho_{i+1,1}^2 }{1+(d_1-1)\rho_{11}}\biggr),\quad i=1,\dots,S-1,
\end{align*}
and with $(i,j)$th block given by
\begin{align*}
  &\phantom{{}={}}\rho_{i+1,j+1}J_{d_{i+1},d_{j+1}} + \frac{ d_1 \rho_{i+1,1}\rho_{j+1,1} }{1+(d_1-1)\rho_{11}}J_{d_{i+1},d_{j+1}}\\
  &= \left(\rho_{i+1,j+1} + \frac{ d_1 \rho_{i+1,1}\rho_{j+1,1} }{1+(d_1-1)\rho_{11}}\right)J_{d_{i+1}d_{j+1}},\quad i,j \in \{1,\dots,S-1\},\ i \neq j.
\end{align*}
Since $\overline P(1)$ has the same structure as the initial matrix $P$, the same procedure can be applied to find a Cholesky factor $L_{-1}$ such that $L_{-1}L_{-1}\T = \overline P(1)$.
By iteratively applying this procedure, we obtain the Cholesky factor $L$ of $P$.
\end{proof}

Algorithm~\ref{algorithm: cholesky decomposition for block matrices} uses only $S(S+1)/2$ coefficients and the block sizes $\{d_1,\dots,d_S\}$ without the need to consider the full $d\times d$ matrix $P$,
which can lead to significant computational savings
especially when $d$ is large and $S$ is small.
The following example covers the individual steps of Algorothm~\ref{algorithm: cholesky decomposition for block matrices} with concrete numbers.

\begin{example}[Case of $S=3$, $(d_1,d_2,d_3)=(4,3,2)$]
  Consider the block homogeneous matrix \eqref{eq:hier:mat}.
  As discussed in Example~\ref{example: Positive definiteness of a hierarchical
    matrix}, the matrix $P$ in \eqref{eq:hier:mat} is positive definite, and
  thus, has a Cholesky factor $L$.  By applying Algorithm~\ref{algorithm:
    cholesky decomposition for block matrices}, the Cholesky factor $L$ of $P$
  is obtained as
  \begin{align*}
    P=\left(\ \ \begin{matrix}
      \cc{0.5} 1   & 0    & 0    & 0    & 0    & 0    & 0    & 0    & 0\\
      \cc{0.8} 0.4 & \cc{0.66} 0.92 & 0    & 0    & 0    & 0    & 0    & 0    & 0\\
      \cc{0.8} 0.4 & \cc{0.85} 0.26 & \cc{0.74} 0.88 & 0    & 0    & 0    & 0    & 0    & 0\\
      \cc{0.8} 0.4 & \cc{0.85} 0.26 & \cc{0.9} 0.2  & \cc{0.78} 0.86 & 0    & 0    & 0    & 0    & 0\\
      \cc{0.9} 0.1 & \cc{0.94} 0.07 & \cc{0.97} 0.05 & \cc{1} 0.04 & \cc{0.52} 0.99 & 0    & 0    & 0    & 0\\
      \cc{0.9} 0.1 & \cc{0.94} 0.07 & \cc{0.97} 0.05 & \cc{1} 0.04 & \cc{0.85} 0.28 & \cc{0.60} 0.95 & 0    & 0    & 0\\
      \cc{0.9} 0.1 & \cc{0.94} 0.07 & \cc{0.97} 0.05 & \cc{1} 0.04 & \cc{0.85} 0.28 & \cc{0.9} 0.21 & \cc{0.64} 0.93 & 0    & 0\\
      \cc{0.9} 0.1 & \cc{0.94} 0.07 & \cc{0.97} 0.05 & \cc{1} 0.04 & \cc{0.92} 0.13 & \cc{0.96} 0.1 & \cc{1} 0.08 & \cc{0.56} 0.97 & 0\\
      \cc{0.9} 0.1 & \cc{0.94} 0.07 & \cc{0.97} 0.05 & \cc{1} 0.04 & \cc{0.92} 0.13 & \cc{0.96} 0.1 & \cc{1} 0.08 & \cc{0.92} 0.15 & \cc{0.58} 0.96\\
    \end{matrix}\ \ \right).
  \end{align*}
  In the first iteration $s=1$ of Algorithm~\ref{algorithm: cholesky
    decomposition for block matrices} with $\overline P(1)=P$, Cholesky factor
  in the first
  $d_1=4$ columns is computed.  By solving \eqref{l-equations},
  $P_{11}=\Gamma_{d_1}(1,\rho_{11})$ is decomposed into $L_{11}$ of form
  \eqref{Cholesky factor of CS matrix}, which is determined by
  $(\tilde l_{11,1},\tilde l_{11,2},\tilde l_{11,3},\tilde
  l_{11,4},l_{11,1},l_{11,2},l_{11,3})=(1.00,0.92,0.88,0.88,0.40,0.26,0.20)$.
  By solving \eqref{c-equations}, $L_{21}$ and $L_{31}$ are determined via
  $(c_{11,1},\dots,c_{11,d_{1}})$ and $(c_{12,1},\dots,c_{12,d_{1}})$ by
  $(c_{11,1},\dots,c_{11,4})=(c_{12,1},\dots,c_{12,4})=(0.1,0.07,0.05,0.04)$.
  For iteration $s=2$, the submatrix $\overline P(2)$ is computed following Step 5)
  via
  $(\rho_{1}^{(2)},\rho_{1,o}^{(2)},\rho_{2}^{(2)},\rho_{2,o}^{(2)},\rho_{12}^{(2)})=(0.98,0.28,0.98,0.18,0.13)$.
  By solving \eqref{l-equations} and \eqref{c-equations}, $L_{22}$ and $L_{32}$
  are specified via
  $(\tilde l_{22,1},\tilde l_{22,2},\tilde
  l_{22,3},l_{22,1},l_{22,1})=(0.99,0.95,0.93,0.28,0.21)$, and
  $(c_{21,1},c_{21,2})=(0.13,0.10)$.  Finally, the submatrix $\overline P(3)$ is
  given by $\overline P(3)=\Gamma_{2}(0.95,0.15)$. The Cholesky factor $L_{33}$ is
  then specified via
  $({\tilde l}_{33,1},{\tilde l}_{33,2},l_{33,1})=c(0.97,0.96,0.15)$ by solving
  the equations in \eqref{l-equations}.
\end{example}

\subsection{Attainability for block matrices}
In this section, we study compatibility and attainability of measures of
concordance for a block homogeneous matrices of form \eqref{block correlation structure}.
We expect that checking compatibility and attainability of a given $d\times d$
block matrix can be reduced to check those of some $S\times S$ matrix for a block
size $S$, which can be much smaller than $d$.

For van der Waerden's coefficient, we have already seen that
Theorem~\ref{theorem: iff condition for positive semi-definiteness of block
  correlation matrix} is available for checking compatibility and that
Proposition~\ref{proposition: Cholesky factor of block matrices} is beneficial
to attain a given $\zeta$-compatible matrix.  For Spearman's rho block matrices, we have the following
result.

\begin{proposition}[$\rho_\text{S}$-compatible subclass of block matrices]\label{proposition: Compatibility criterion for block matrix of spearmans rho}
  Let $P$ be a $d_1+\cdots+d_S$ block homogeneous correlation matrix of form \eqref{block correlation structure}.
  Let $M =(m_{s_ks_l})$ be a $S\times S $ matrix with $m_{ss}=1$, $s=1,\dots,S$, and
  \begin{align*}
    m_{s_ks_l} = \frac{d_{s_k} d_{s_l} \rho_{s_k,s_l}}{(1+(d_{s_k} - 1 )\rho_{s_ks_k})(1+(d_{s_l} - 1 )\rho_{s_ls_l})},\quad s_k,s_l \in \{1,\dots,S\},\ s_k \neq s_l.
  \end{align*}
  If $M\in \mathcal S_S$, then $P$ is $\rho_{\text{S}}$-compatible.
  Moreover, if $M$ is $\rho_{\text{S}}$-attainable, so is $P$.
\end{proposition}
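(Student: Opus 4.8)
The plan is to build, from a random vector realizing $M$, a random vector realizing $P$ by a two-level mixing construction, and then to note that the construction is explicit (which gives the attainability part for free). Since a random vector with $\U(0,1)$ margins has Spearman's rho matrix equal to its Pearson correlation matrix, it suffices to exhibit $\bm X=(X_{11},\dots,X_{1d_1},\dots,X_{S1},\dots,X_{Sd_S})$ with $\U(0,1)$ margins and Pearson correlation matrix $P$. By $M\in\mathcal S_S=\mathcal P_S^{\text{U}}$ (Proposition~\ref{prop:characterize:S:B:W}~\ref{prop:characterize:S:B:W:2}) there is $\bm V=(V_1,\dots,V_S)$ with $\U(0,1)$ margins and $\rho(V_s,V_t)=m_{st}$. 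Set $p_s=(1+(d_s-1)\rho_{ss})/d_s$; since $P$ is a correlation matrix, $\rho_{ss}\in[-1/(d_s-1),1]$, so $p_s\in[0,1]$, and the definition of $m_{st}$ can be rewritten as $\rho_{st}=p_sp_t\,m_{st}$ for $s\neq t$.

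Next, for each block $s$, I would introduce, mutually independently and independently of $\bm V$, a $\Bern(p_s)$ random vector $(I_{s1},\dots,I_{sd_s})$ and a $\U(0,1)$-margin random vector $(Z_{s1},\dots,Z_{sd_s})$ with compound-symmetric correlation matrix $\Gamma_{d_s}(1,r_s)$, and set
\begin{align*}
  X_{sk}=I_{sk}V_s+(1-I_{sk})Z_{sk},\qquad k=1,\dots,d_s,\ s=1,\dots,S.
\end{align*}
This is in the spirit of the mixing used in Proposition~\ref{prop:suff:cond:k:compatibility}, with the countermonotone partner $1-U$ replaced by an independent $Z_{sk}$ and $\Bern(1/2)$ by $\Bern(p_s)$. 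Each $X_{sk}$ is $\U(0,1)$ as a mixture of two $\U(0,1)$ variables. Expanding $\Cov(X_{sk},X_{tl})$ for $s\neq t$, every term involving a $Z$ vanishes by independence and one is left with $\Cov(X_{sk},X_{tl})=p_sp_t\Cov(V_s,V_t)$, hence $\rho(X_{sk},X_{tl})=p_sp_t\,m_{st}=\rho_{st}$; since only $\E[I_{sk}]=p_s$ enters, the cross-block entries already pin down each $p_s$. For $k\neq k'$ in the same block $V_s$ is shared, and the analogous expansion gives $\rho(X_{sk},X_{sk'})=\pi_s+(1-2p_s+\pi_s)\,r_s$ with $\pi_s=\P(I_{sk}=I_{sk'}=1)$, so it remains to choose the joint law of $(I_{s1},\dots,I_{sd_s})$ and the scalar $r_s$ to make this equal $\rho_{ss}$. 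For $\rho_{ss}\in[0,1)$ I would take $(Z_{sk})$ i.i.d.\ (so $r_s=0$) and an exchangeable $\Bern(p_s)$ vector with $\pi_s=\rho_{ss}$, which exists whenever $\rho_{ss}\in[0,1]$ (the Bernoulli-compatibility constraints $0\le\pi_s\le p_s$, $\pi_s\ge 2p_s-1$ and $(\pi_s-p_s^2)/(p_s(1-p_s))\ge -1/(d_s-1)$ all reduce to $\rho_{ss}\in[0,1]$); for $\rho_{ss}\in[-1/(d_s-1),0)$ I would take $(I_{sk})$ i.i.d.\ (so $\pi_s=p_s^2$) and $r_s=(\rho_{ss}-p_s^2)/(1-p_s)^2$, which simplifies to $(\rho_{ss}-(d_s-1)^{-2})/(1-\rho_{ss})\in[-1/(d_s-1),0)$, so that $\Gamma_{d_s}(1,r_s)$ is again a valid Spearman's rho matrix; the cases $d_s=1$ and $\rho_{ss}=1$ are trivial ($p_s=1$, $X_{sk}=V_s$). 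This yields $\rho_{\text{S}}(\bm X)=P$, and since $\bm X$ is an explicit function of a realization of $M$ together with auxiliary independent randomness, $\rho_{\text{S}}$-attainability of $M$ transfers to $P$.

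The hard part will be the within-block fitting: carrying out the covariance computations that rigidly force $p_s=(1+(d_s-1)\rho_{ss})/d_s$ and produce the formula for $r_s$, and, above all, exhibiting the auxiliary within-block vectors as bona fide random vectors. For $\rho_{ss}\ge 0$, and for every $d_s\le 9$, this is routine; but for $\rho_{ss}<0$ with $d_s\ge 10$ one needs that a positive semi-definite compound-symmetric matrix $\Gamma_{d_s}(1,r_s)$ with $r_s\ge -1/(d_s-1)$ is realizable with $\U(0,1)$ margins, i.e.\ lies in $\mathcal S_{d_s}$, and this requires a separate argument. A secondary bookkeeping point is that, the cross-block entries having fixed the $p_s$, all remaining freedom for matching $\rho_{ss}$ sits in the joint law of $(I_{s1},\dots,I_{sd_s})$ and in $r_s$, and one must check these two knobs reach every admissible $\rho_{ss}\in[-1/(d_s-1),1]$.
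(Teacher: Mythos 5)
Your overall architecture matches the paper's: realize $M$ by a uniform-margin vector $(V_1,\dots,V_S)$, rewrite the definition of $m_{st}$ as $\rho_{st}=p_sp_t\,m_{st}$ with $p_s=(1+(d_s-1)\rho_{ss})/d_s$, and use a $\Bern(p_s)$ switch between the shared factor $V_s$ and an auxiliary within-block vector; your cross-block computation $\rho(X_{sk},X_{tl})=p_sp_t m_{st}=\rho_{st}$ is correct. The problems are inside the blocks, and there are two genuine gaps. First, for $\rho_{ss}\ge 0$ you need a $d_s$-dimensional exchangeable $\Bern(p_s)$ vector with $\P(I_{sk}=I_{sk'}=1)=\rho_{ss}$; the three conditions you list are only \emph{pairwise} necessary conditions, and — as the paper's own discussion of $\mathcal P_d^{\text{B}}(1/2)\subsetneq\mathcal P_d$ illustrates — pairwise conditions do not in general guarantee the existence of a $d_s$-variate Bernoulli vector. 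The claim happens to be true (the achievable pairs $(p,\pi)$ for exchangeable length-$d_s$ Bernoulli vectors are the convex hull of $\{(k/d_s,\,k(k-1)/(d_s(d_s-1)))\}_{k=0}^{d_s}$, and $(p_s,\rho_{ss})$ can be checked to lie in it), but that argument is absent. Second, for $\rho_{ss}<0$ you need $\Gamma_{d_s}(1,r_s)\in\mathcal S_{d_s}$ for a negative $r_s$, and you explicitly leave this unresolved for $d_s\ge 10$ (your closed form for $r_s$ also does not simplify to the expression you state, though that is a side issue). This gap is fillable from material already in the paper: the extremal matrix $\Gamma_{d_s}(1,-1/(d_s-1))$ is attainable with $\U(0,1)$ margins by the construction the paper cites, $J_{d_s}$ is attainable by comonotonicity, and $\mathcal S_{d_s}$ is convex (Corollary~\ref{convexity of K_G set} with Proposition~\ref{prop:characterize:S:B:W}), so every $\Gamma_{d_s}(1,r)$ with $r\in[-1/(d_s-1),1]$ is attainable. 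As written, however, both existence claims are unproven, so the proof is incomplete.

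The paper's proof sidesteps both issues with one structural change: it uses a \emph{single} Bernoulli $B_s\sim\Bern(p_s)$ per block rather than one per coordinate, and takes the auxiliary vector $\bV_s$ to have the extremal correlation matrix $\Gamma_{d_s}(1,-1/(d_s-1))$. The within-block correlation is then the convex combination $p_s\cdot 1+(1-p_s)\cdot(-1/(d_s-1))=\rho_{ss}$, an identity that holds for every admissible $\rho_{ss}$ with no case split and no second tuning parameter, and the only existence input is the single extremal construction. If you adopt the block-level switch, your two ``knobs'' collapse to one and both gaps disappear.
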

\begin{proof}
  Let $\lambda_s=\tilde \rho_{ss}=\frac{1+(d_s - 1)\rho_{ss}}{d_s}$.
  Then positive definiteness of $P$ requires $-1/(d_s - 1)< \rho_{ss}< 1$ and thus it holds that $\lambda_s \in (0,1)$.
  Notice that
  \begin{align*}
    \lambda_{s} + (1-\lambda_s)\left( -\frac{1}{d_s - 1}\right) = \rho_{ss}.
  \end{align*}
  If $M \in \mathcal S_S$, there exists an $S$-dimensional
  random vector $\bm{U}=(U_1,\dots,U_S)$ with standard uniform margins such that
  $\rho(\bm{U})= M$.  For $s\in \{1,\dots,S\}$, there exists a $d_s$-dimensional
  random vector $\bV_{s}$ with $\U(0,1)$ margins such that its correlation matrix is
  $\Gamma(1,-1/(d_s-1))$ for $s\in \{1,\dots,S\}$; see
  \cite{murdoch2001edge} for a construction.  Let $\bV_1,\dots,\bV_S$ be such random vectors
  independent of each other, and also independent of
  $\bm{U}$.  For $s\in \{1,\dots,S\}$, let $B_s \sim \Bern(\lambda_{s})$ such
  that $B_1,\dots,B_S$ are independent of each other, and independent of
  $\bm{U}$ and $\bV_1,\dots,\bV_S$.  For $s=1,\dots,S$, define a
  $d_s$-dimensional random vector
  \begin{align}\label{W vector}
    \bW_s = B_{s}U_{s}\bm{1}_{d_s}+ (1-B_s)\bV_s.
  \end{align}
  One can easily check that $\bW_s$ has $\U(0,1)$ marginals.
  Moreover, for $s=1,\dots,S$,
  \begin{align*}
    \rho(\bW_s) = \lambda_{s}J_{d_{s}} + (1-\lambda_{s})\Gamma_{d_{s}}(1,-1/(d_s - 1))
    = \Gamma_{d_s}(1,\rho_{ss})=P_{ss},
  \end{align*}
  and for $s_1\neq s_2$, $i=1,\dots,d_{s_1}$, $j=1,\dots,d_{s_2}$,
  \begin{align*}
    \rho(W_{s_1i},W_{s_2j}) = \lambda_{s_1}\lambda_{s_2}\rho(U_{s_1},U_{s_2})
    =  \lambda_{s_1}\lambda_{s_2}m_{s_1s_2}
    = \rho_{s_1s_2}.
  \end{align*}
  Therefore, $(\bW_1\T,\dots,\bW_S\T)$ is a $(d_1 + \cdots + d_S)$-dimensional
  random vector with correlation matrix $P$.  Since its marginal distributions
  are all $\U(0,1)$, $P$ is $\rho_{\text{S}}$-compatible by Proposition~\ref{prop:characterize:S:B:W}~\ref{prop:characterize:S:B:W:2}.  If $M$ is $\rho_{\text{S}}$-attainable by constructing $\bU$
  above, then $P$ is $\rho_{\text{S}}$-attainable via construction \eqref{W vector}.
\end{proof}

If $S\leq 9$, checking $M\in \mathcal S_S$ can be reduced to checking its positive
semi-definiteness by
Proposition~\ref{prop:characterize:S:B:W}~\ref{prop:characterize:S:B:W:1} and
\ref{prop:characterize:S:B:W:2}.  If $S\geq 10$, a sufficient condition is
available related to $\Bern(1/2)$-compatibility by Proposition
\ref{prop:suff:cond:k:compatibility}.  On attainability of $P$, $M$ is
$\rho_{\text{S}}$-attainable only for the sector size $S=3$; see the discussion
of $\rho_{\text{S}}$-attainability in Section~\ref{subsec: Attainability}.

\begin{example}[Case with $d=9$ and $S=3$]
Let $P$ be the block homogeneous correlation matrix defined in \eqref{eq:hier:mat}.
Since $d\leq 9$, its compatibility can be verified by checking that $P$ is positive semi-definite.
In fact, the corresponding matrix $M$ in Proposition~\ref{proposition: Compatibility criterion for block matrix of spearmans rho} of $P$ is
\begin{align*}
 M = \begin{pmatrix}
    1 & 0.341 & 0.303 \\
    0.341 & 1 & 0.469 \\
    0.303 & 0.469 & 1 \\
  \end{pmatrix}
\end{align*}
and one can also check that $M$ is positive definite by a simple calculation.
Therefore, $P$ is $\rho_{\text{S}}$-compatible by Proposition~\ref{proposition:
  Compatibility criterion for block matrix of spearmans rho}.  Since $M$ is
$3$-dimensional, $P$ is $\rho_{\text{S}}$-attainable; see the discussion in Subsection~\ref{subsec: Attainability}.
Therefore, even though
$P$ is 9 $(> 3)$-dimensional, it is $\rho_{\text{S}}$-attainable by construction~\eqref{W vector}.
\end{example}

When a given block homogeneous matrix $P$ is a hierarchical matrix, then the
following sufficient condition is available for compatibility and attainability
of \emph{any} measure of concordance.

\begin{proposition}[Compatible and attainable hierarchical matrices]\label{prop:compatibility:hierarchical:mat}
  For a general measure of concordance $\kappa$, a $d\times d$ hierarchical
  matrix $P$ is $\kappa$-compatible and $\kappa$-attainable (by a
    nested or hierarchical Archimedean copula (HAC)) if, for the corresponding hierarchical
  tree, $0\leq\rho_{v}\leq\rho_{v'}$ holds for every pair of nodes $(v,v')$ such
  that $v$ is a parent of $v'$.
\end{proposition}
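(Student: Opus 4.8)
The plan is to realize $P$ as the matrix of pairwise $\kappa$-values of a nested Archimedean copula built along the hierarchical tree $T_P$, choosing the generator at each node from a fixed one-parameter Archimedean family so that the ordering of the parameters mirrors the assumed ordering $\rho_v\le\rho_{v'}$ of the node values. Concretely, I would fix a positively ordered Archimedean family $(C_\theta)_{\theta\in[0,\infty]}$ — for instance the Clayton family, with $C_0=\Pi$ (independence), $C_\infty=M$ (comonotonicity), and $C_\theta$ increasing in $\theta$ with respect to the concordance order — for which the nesting condition of the HAC construction between a parent generator and a child generator reduces exactly to ``parent parameter $\le$ child parameter'' (the standard sufficient condition for nested Clayton, resp.\ Gumbel, copulas; see the HAC literature referenced for hierarchical Archimedean copulas).

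Next I would use the axioms of a measure of concordance to control the scalar map $\theta\mapsto\kappa(C_\theta)$: coherence makes it nondecreasing, the independence and range axioms give $\kappa(C_0)=0$ and $\kappa(C_\infty)=1$, and the continuity axiom together with $C_\theta\to M$ pointwise gives $\kappa(C_\theta)\to 1$ as $\theta\to\infty$. By the intermediate value theorem, for each node $v$ with value $\rho_v\in[0,1]$ there is a parameter $\theta_v\in[0,\infty]$ with $\kappa(C_{\theta_v})=\rho_v$ (taking $\theta_v=\infty$, i.e.\ the comonotone ``generator'', in the degenerate case $\rho_v=1$, and $\theta_v=0$ when $\rho_v=0$). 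Since $\theta\mapsto\kappa(C_\theta)$ is nondecreasing, the hypothesis $0\le\rho_v\le\rho_{v'}$ for every parent–child pair $(v,v')$ lets me choose the $\theta_v$ so that $\theta_v\le\theta_{v'}$ along every edge of $T_P$; hence the nesting condition holds everywhere and the nested Archimedean copula $C_P$ with structure $T_P$ and node generators indexed by the $\theta_v$ is a well-defined $d$-copula.

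Finally I would verify that $C_P$ produces exactly the matrix $P$. It is a standard property of nested Archimedean copulas that, for $i\neq j$, the bivariate margin of $(X_i,X_j)$ with $\bm X\sim C_P$ is the Archimedean copula $C_{\theta_v}$ whose generator sits at the lowest common ancestor $v$ of the leaves containing $i$ and $j$ (two indices in the same leaf block giving the copula attached to that leaf). Therefore $\kappa(X_i,X_j)=\kappa(C_{\theta_v})=\rho_v$, which by the construction of a hierarchical matrix from $T_P$ (Example~\ref{ex:hier:matrices}) is precisely the $(i,j)$-entry of $P$; together with the trivial diagonal entries this gives $\kappa(\bm X)=P$, so $P$ is $\kappa$-compatible, and since $C_P$ has been constructed explicitly it is $\kappa$-attainable by this HAC.

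The main obstacle I anticipate is pinning down a single Archimedean family that simultaneously (i) sweeps the full concordance range from $\Pi$ to $M$ so that every target value $\rho_v\in[0,1]$ is hit, and (ii) has its nested-copula well-posedness condition equivalent to the mere ordering of parameters, so that the tree hypothesis $\rho_v\le\rho_{v'}$ translates directly into admissibility of the nesting; the Clayton (equivalently Gumbel) family delivers both, but the argument must also handle the boundary cases $\rho_v\in\{0,1\}$ at internal nodes — independence or comonotone ``generators'' — which is where a little care with the limiting cases of the HAC construction, and with the fact that $\rho_v=1$ forces $\rho_{v'}=1$ for all descendants, is required.
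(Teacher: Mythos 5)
Your proposal is correct and follows essentially the same route as the paper's proof: both fix a completely monotone, positively ordered Archimedean family (Clayton/Gumbel) interpolating between $\Pi$ and $M$, use the coherence and continuity axioms to make $\theta\mapsto\kappa(C_\theta)$ increasing and continuous so that each $\rho_v$ is hit by some $\theta_v$ with parent--child ordering preserved, and then invoke the sufficient nesting condition to assemble the HAC whose pairwise $\kappa$-matrix is $P$. Your extra care with the boundary cases $\rho_v\in\{0,1\}$ and the explicit remark that bivariate margins are governed by the lowest common ancestor are welcome refinements of details the paper leaves implicit.
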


\begin{proof}
  Let $\psi_{\theta}:[0,\infty]\rightarrow [0,1]$ be a one-parameter Archimedean generator
  with $\theta \in \Theta=(\theta_{\text{min}},\theta_{\text{max}})$,
  $\theta_{\text{min}}\le\theta_{\text{max}}\le\infty$ and let
  $C_{\theta}(u_1,u_2)=\psi_{\theta}(\psi_{\theta}^{-1}(u_1)+\psi_{\theta}^{-1}(u_2))$, $u_1,u_2\in[0,1]$,
  be the corresponding Archimedean copula family. Suppose
  $\{\psi_{\theta};\theta\in \Theta\}$ satisfies the following conditions:
  \begin{enumerate}
  \item[(1)](Complete monotonicity) $(-1)^{k}\frac{\rd ^k}{\rd t^{k}}\psi_{\theta}(t)\geq 0$ for any $\theta \in \Theta$ and $k=0,1,\dots$;
  \item[(2)](Limiting copulas) $C_{\theta_{\text{min}}}=\lim_{\theta\downarrow\theta_{\text{min}}}C_{\theta}$ is the independence copula and $C_{\theta_{\text{max}}}=\lim_{\theta\uparrow\theta_{\text{max}}}C_{\theta}$ is the comonotone copula;
  \item[(3)](Positive ordering) if $\theta,\theta' \in \Theta$ such that $\theta\leq \theta'$ then $C_{\theta}\preceq C_{\theta'}$; and
  \item[(4)](Sufficient nesting condition) $\psi_{\theta}^{-1}\circ\psi_{\theta'}$ is completely monotone for $\theta,\theta' \in \Theta $ if and only if $\theta\leq \theta'$.
  \end{enumerate}
  Examples of Archimedean copulas satisfying Conditions (1)--(4) are the Clayton
  and Gumbel copula families with generators given by Laplace
    transforms of certain gamma and positive stable distributions,
    respectively; see \cite[Examples~4.12 and 4.14]{nelsen2006} and
  \cite[Tables~2.1 and 2.3]{hofert2010c}. Note that Condition (1) guarantees
  that the $d$-dimensional Archimedean copula
  $C_{\theta}(u_1,\dots,u_d)=\psi_{\theta}(\sum_{j=1}^{d}\psi_{\theta}^{-1}(u_j))$
  is also a $d$-copula for any $d\geq 2$; see \cite{kimberling1974}.  Together
  with the continuity and coherence axioms of a measure of concordance,
  Condition (2) and (3) imply that the map
  $\kappa(\theta):\theta \mapsto \kappa(C_{\theta})$ is increasing and
  continuous from $\Theta$ to $[0,1]$.  Therefore, for every pair of nodes
  $(v,v')$, there exist $\theta_{v},\theta_{v'}\in \Theta$ such that
  $\theta_{v} \leq \theta_{v'}$ and
  $\kappa(\theta_v)=\rho_v\leq \rho_{v'}= \kappa(\theta_{v'})$.  For the
  hierarchical tree $T_P$ of a given hierarchical matrix $P$ with the
  corresponding collection of generators $\{\psi_{\theta_v};v \in \mathcal V\}$,
  Condition (4) thus ensures that there exists a corresponding HAC; see
  \cite{mcneil2008} and \cite[pp.~87]{joe1997} for the sufficient nesting
  condition and \cite{hofert2012b} and \cite{goreckihofertholena2017b} for the
  construction of HACs. By construction, the matrix of pairwise measure of
  concordance $\kappa$ is equal to $P$ for this HAC. Thus, $P$ is both
  $\kappa$-compatible and $\kappa$-attainable.
\end{proof}

When a hierarchical matrix $P$ satisfies the sufficient condition in
Proposition~\ref{prop:compatibility:hierarchical:mat}, we call $P$ a
\emph{proper hierarchical matrix}.  Note that componentwise non-negativity of $P$ is
necessary since complete monotonicity (1) of $\psi_\theta$ implies
that $\Pi\preceq C_{\theta}$; see \cite[Remark~2.3.2]{hofert2010c}.
For sampling from a HAC, see \cite{mcneil2008}, \cite{hofert2011a} or \cite{hofert2012b}.

\begin{remark}[Positive definiteness of hierarchical matrices]
  In Proposition~\ref{prop:compatibility:hierarchical:mat}, positive
  definiteness of $P$ was not a necessary assumption. In fact, positive
  definiteness is impled by the condition $0\leq \rho_v\leq \rho_{v'}$ for any
  $v$ and $v'$ such that $v'$ is a parent of $v$ since
  Proposition~\ref{prop:compatibility:hierarchical:mat} holds for any
  $G$-transformed rank correlation coefficient and $\kappa_G$-compatible
  matrices are necessarily positive definite.
\end{remark}

\begin{example}[Attainability of hierarchical matrix~\eqref{eq:hier:mat} for general $\kappa$]
  By Proposition~\ref{prop:compatibility:hierarchical:mat}, the hierarchical
  matrix $P$ in \eqref{eq:hier:mat} is $\kappa$-compatible and $\kappa$-attainable for any
  measure of concordance $\kappa$ since $P$ is proper as can be easily checked
  from Figure~\ref{fig:tree:hier:mat}. As an example of a model attaining $P$,
  let $\psi_{\theta}$ be the generator of Gumbel copula and let $C_P$ be the
  corresponding HAC given, for each $\bm{u}\in[0,1]^9$, by
  \begin{align*}
    C_P (u_1,\dots,u_9)=C_{v_{01}}\bigl(C_{v_{11}}(u_1,u_2,u_3,u_4),C_{v_{12}}(C_{v_{21}}(u_5,u_6,u_7),C_{v_{22}}(u_8,u_9))\bigr),
  \end{align*}
  where the Gumbel copula $C_v$ has parameter $\theta_v$ such that
  $\kappa(C_{v})=\rho_v$ is attained for every node $v$.  For example, if
  $\kappa$ is Blomqvist's beta $\beta$, one has
  $\beta(\theta_v)=\beta(C_v)=4C_v(1/2,1/2)-1=2^{2-2^{1/\theta_v}}-1$,
  $\theta_v \in [1,\infty)$, which is continuous and increasing
  from $0$ to $\lim_{\theta_v \rightarrow \infty}\beta(\theta_v)=1$. Therefore, for
  each $\rho_v=\beta_{v}$, $v \in \mathcal V$, the parameter $\theta_v$ is given by
  $\theta_v = 1/\bigl(\log_2(2-\log_2(1+\beta_v))\bigr)$.

  As an another example, when $\kappa$ is Kendall's tau $\tau$, it is known that
  $\tau(\theta_v)=\tau(C_{\theta_v})=(\theta_v-1)/\theta_v$ for
  $\theta_v\in\Theta=[1,\infty)$ and so $\theta_v=1/(1-\tau_v)$ where $\tau_v$
  is the corresponding entry in $P$ in~\eqref{eq:hier:mat} or
  Figure~\ref{fig:tree:hier:mat}.  Thus, for example, $\tau_{v_{01}}=0.1$
  implies that $\theta_{v_{01}}=10/9$. The same construction applies to $\kappa$
  being Spearman's rho or van der Waerden's coefficient and the $C_{v}$ being
  Clayton copulas, for example. Note that it may sometimes be necessary to find
  $\theta_v$ such that $\kappa(\theta_v)=\kappa_v$ for a given $\kappa_v$ numerically.
\end{example}

\section{Conclusion and discussion}
We introduced a new class of measures of concordance called transformed rank
correlation coefficients, whose members depend on functions $G_1$ and
$G_2$. Spearman's rho, Blomqvist's beta and van der Waerden's coefficient are
obtained as special cases. We provided necessary and sufficient conditions on
$G_1$ and $G_2$ when transformed rank correlation coefficients are measures of
concordance; see Theorem~\ref{thm:G1:G2}.

For matrices of (pairwise) transformed rank correlation coefficients, a
sufficient condition for compatibility and attainability was derived in terms of
$\Bern(1/2)$-compatibility; see Proposition~\ref{prop:suff:cond:k:compatibility}
and Corollary~\ref{corollary: kappa attainability} for compatibility and
attainability, respectively.  We also presented characterizations of the sets of
compatible Spearman's rho, Blomqvist's beta and van der Waerden's matrices; see
Proposition~\ref{prop:characterize:S:B:W}.  This result revealed that, among
these measures of concordance, van der Waerden's coefficient may be the most convenient one
in terms of checking compatibility and attainability.

We then studied compatible and attainable block matrices for which fast methods
of checking positive semi-definiteness and of calculating Cholesky factors were
derived; see Theorem~\ref{theorem: iff condition for positive semi-definiteness
  of block correlation matrix} and Algorithm~\ref{algorithm: cholesky
  decomposition for block matrices}, respectively.  For certain subclasses of
block matrices, the problem of checking compatibility and attainability can be
reduced to lower dimensions; see Proposition~\ref{proposition: Compatibility
  criterion for block matrix of spearmans rho} and
Proposition~\ref{prop:compatibility:hierarchical:mat}.

While hierarchical Kendall's tau matrices with non-negative entries are
attainable, Kendall's tau is not a transformed rank correlation
coefficient. This gives rise to the open question of compatibility and
attainability of Kendall's tau matrices.
Finding a wider class of measures of
concordance including Kendall's tau and other concordance measures such as Gini's
coefficient could help in providing an answer to this
question.
An another angle to take for future research concerns a comparison among different
transformed rank correlation coefficients to obtain a clear answer on which
measure is the best to be used from a statistical point of view. In terms of
block matrices, dimension reduction for (computationally) checking compatibility
of general transformed rank correlation coefficients is also an interesting problem for
future research.

\section*{Acknowledgements}
We thank Ruodu Wang (University of Waterloo) for fruitful discussions on an
early version of the manuscript and Sebastian Fuchs (Technische Universit\"at
Dortmund) for his comments on Theorem~\ref{thm:G1:G2} and
Remark~\ref{rem:D4inv}. The first author would also like to thank NSERC for
financial support through Discovery Grant RGPIN-5010-2015.

\printbibliography[heading=bibintoc]

\bigskip
\noindent{\scshape Marius Hofert}\\
{\itshape Department of Statistics and Actuarial Science\\
  University of Waterloo\\
  Waterloo, ON, Canada\\
Email:}\ \href{mailto:marius.hofert@uwaterloo.ca}{\nolinkurl{marius.hofert@uwaterloo.ca}}

\bigskip
\noindent{\scshape Takaaki Koike}\\
{\itshape Department of Statistics and Actuarial Science\\
  University of Waterloo\\
  Waterloo, ON, Canada\\
Email:}\ \href{mailto:tkoike@uwaterloo.ca}{\nolinkurl{tkoike@uwaterloo.ca}}

\appendix

\section{Measures of concordance which cannot be represented as $\kappa_G$}\label{app:B}
As discussed in Remark~\ref{remark:degree}, any measure of concordance which has
degree more than one is not included in the set of $G$-transformed rank
correlations. In this section, we briefly provide examples of such measures of
concordance which are not $G$-transformed rank correlations.

To this end, consider Kendall's tau $\tau$ and Gini's coefficient $\gamma$ defined by
\begin{align*}
  \tau(X_1,X_2)&=4\int_{[0,1]^2}C(u,v)\,\rd C(u,v)-1,\\
  \gamma(X_1,X_2)&=4\int_{[0,1]^2}(M(u,v)+W(u,v))\,\rd C(u,v)-2,
\end{align*}
respectively.
The $G$-transformed rank correlation coefficient can be written as
\begin{align*}
  k_{G}(C) =\frac{1}{\sigma^2} \int_{[0,1]^2} G^\i(u)G^\i(v)\,\rd C(u,v) -\left(\frac{\mu}{\sigma}\right)^2,
\end{align*}
where $\mu\in\IR$ and $\sigma>0$ are the mean and standard deviation of $G$,
respectively. This expression implies that the integrand with respect to the
underlying copula $C$ must be of the product form $G^\i(u)G^\i(v)$.  Since the
integrands of $\tau$ and $\gamma$ cannot be decomposed into such a
product form in general, these measures of concordance are not $G$-transformed
rank correlation coefficients.

\section{Open problem for compatibility of Kendall's tau matrices}\label{app:C}
It is challenging to characterize the sets of compatible and attainable matrices
for Kendall's tau and Gini's coefficient since they cannot be written as
$G$-transformed rank correlation coefficients. The proof of
Proposition~\ref{prop:compatibility:hierarchical:mat} also applies to $\tau$, so
proper hierarchical matrices are $\tau$-compatible and $\tau$-attainable.  In
this section we present some partial results on Kendall's tau compatibility for
general matrices.

Denote by $\mathcal T_d$ the set of all Kendall's tau matrices attained
by continuous $d$-random vectors. The following result stems
from the definition of Kendall's tau.
\begin{proposition}[A necessary condition for $\tau$-compatibility]\label{partial characterization of Kendalls tau matrix}
  $\mathcal T_d \subseteq \mathcal P_{d}^{\text{B}}(1/2)$, that is, any
  Kendall's tau matrix is a correlation matrix of some $d$-random vector with $\Bern(1/2)$
  margins.
\end{proposition}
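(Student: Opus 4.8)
The plan is to use the representation of Kendall's tau as a covariance of sign statistics built from an independent copy. Fix a Kendall's tau matrix $T = (\tau_{ij}) \in \mathcal T_d$ and let $\bm{X} = (X_1,\dots,X_d)$ be a random vector with continuous margins such that $\tau(X_i,X_j) = \tau_{ij}$ for all $i,j$. Let $\bm{X}' = (X_1',\dots,X_d')$ be an independent copy of $\bm{X}$. Recall that, for continuous margins, the copula-integral definition of $\tau$ used in the excerpt coincides with
\[
  \tau(X_i,X_j) = \E\bigl[\sign\bigl((X_i - X_i')(X_j - X_j')\bigr)\bigr],
\]
since both sides equal $2\,\P\bigl((X_i - X_i')(X_j - X_j') > 0\bigr) - 1$; indeed $4\int_{[0,1]^2} C\,\rd C = 2\,\P\bigl((X_i - X_i')(X_j - X_j') > 0\bigr)$ for the pairwise copula $C$ of $(X_i,X_j)$. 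I would then show that the vector of signs $S_j := \sign(X_j - X_j')$, $j = 1,\dots,d$, realizes $T$ as a correlation matrix after an affine rescaling to $\Bern(1/2)$ margins.

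First I would record the marginal behaviour of each $S_j$. Continuity of the distribution of $X_j$ gives $\P(X_j = X_j') = 0$, so $S_j \in \{-1,+1\}$ almost surely; and $(X_j, X_j') \deq (X_j', X_j)$ forces $\P(S_j = 1) = \P(S_j = -1) = 1/2$. Hence $S_j$ is symmetric with $\E[S_j] = 0$ and $\Var(S_j) = 1$, and $B_j := (1 + S_j)/2 \sim \Bern(1/2)$. Next I would compute the pairwise correlations of $\bm{S} = (S_1,\dots,S_d)$: from the displayed identity, $\E[S_i S_j] = \E\bigl[\sign\bigl((X_i - X_i')(X_j - X_j')\bigr)\bigr] = \tau_{ij}$, so
\[
  \rho(S_i, S_j) = \frac{\Cov(S_i, S_j)}{\sqrt{\Var(S_i)\,\Var(S_j)}} = \E[S_i S_j] - \E[S_i]\,\E[S_j] = \tau_{ij}.
\]
Thus the correlation matrix of $\bm{S}$ equals $T$. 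Since Pearson's correlation coefficient is invariant under strictly increasing affine transformations, $\bm{B} = (B_1,\dots,B_d)$ has $\Bern(1/2)$ margins and correlation matrix $\rho(\bm{B}) = \rho(\bm{S}) = T$, so $T \in \mathcal P_{d}^{\text{B}}(1/2)$; this proves $\mathcal T_d \subseteq \mathcal P_{d}^{\text{B}}(1/2)$.

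I do not expect a real obstacle here: once the sign representation of $\tau$ is in place the argument is essentially bookkeeping. The only two points needing a line of care are (i) reconciling the copula-integral form of $\tau$ in the excerpt with the independent-copy/sign form, which is classical for continuous margins and follows from the computation $4\int_{[0,1]^2} C\,\rd C = 2\,\P\bigl((X_i - X_i')(X_j - X_j') > 0\bigr)$ noted above; and (ii) the exchangeability-plus-continuity argument showing each $S_j$ is a genuine symmetric sign variable, equivalently $\Bern(1/2)$ after the affine rescaling.
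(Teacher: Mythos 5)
Your proof is correct and essentially the same as the paper's: the paper defines $B_j=\I_{\{U_j\le\tilde U_j\}}$ from an independent copy on the copula scale and computes $\rho(B_i,B_j)=4\int C\,\rd C-1=\tau_{ij}$, which is exactly your construction up to the affine bijection $S_j=2B_j-1$ between signs and indicators. The sign-representation identity you invoke is the same independent-copy identity the paper uses, so there is nothing substantively different to compare.
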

\begin{proof}
  Fix $(\tau_{ij})\in \mathcal T_d$.  Then there exists a $d$-random vector
  $\bm{X}=(X_1,\dots,X_d)$ with continuous margins $F_1,\dots,F_d$ such that
  $\tau(X_i,X_j)=\tau_{ij}$ for all $i,j\in \{1,\dots,d\}$.  Let
  $\bm{U}=(U_1,\dots,U_d)=(F_1(X_1),\dots,F_d(X_d))$.  If $\bm{X}$ has copula
  $C$, then $\bm{U}\sim C$ by continuity of $F_1,\dots,F_d$.  Let
  $\tilde{\bm{U}}\sim C$ be an independent copy of $\bm{U}$ and define
  $\bm B = (B_1,\dots,B_d)$ with $B_j = \I_{\{U_j\le\tilde U_j\}}$,
  $j=1,\dots,d$. Since $U_j$ and $\tilde U_j$ are independent and
  identically distributed with $\P(\I_{\{U_j\le\tilde U_j\}}=1)=1/2$, we have
  $B_j \sim \Bern(1/2)$ for $j=1,\dots,d$.  Consequently, for
  $i,j\in \{1,\dots,d\}$,
  \begin{align*}
    \rho(B_i,B_j)&=4\E(\I_{\{U_i\le\tilde U_i\}}\I_{\{U_j\le\tilde U_j\}})-1
    =4 \int_{[0,1]^2}\E(\I_{\{U_i\le u_i\}}\I_{\{U_j\le u_j\}})\,\rd C(u_i,u_j)-1\\
    &=4\int_{[0,1]^2}C(u_i,u_j)\,\rd C(u_i,u_j) - 1 = \tau_{ij},
  \end{align*}
  where the second equation follows by conditioning on $\tilde \bU \sim C$
  independent of $\bU \sim C$.  Since $(\tau_{ij})$ is attained as a
  correlation matrix of a symmetric Bernoulli random vector $\bm B$, we conclude
  that $(\tau_{ij})\in \mathcal P_d^{\text{B}}(1/2)$.
\end{proof}

Proposition \ref{partial characterization of Kendalls tau matrix} provides a
necessary condition for a given matrix to be $\tau$-compatible.
Thus, a given matrix $P$ is $\tau$-incompatible if $P$ does not belong to $\mathcal P_d^{\text{B}}(1/2)$.
Together with Corollary~\ref{corollary:upper:lower:bounds}, one obtains that
$\mathcal T_d\subseteq \mathcal K_G$ for any concordance-inducing function $G$, that is, the set of $\tau$-compatible matrices is smaller than $ \mathcal K_G$ for any choice of $G$.

Whether
$\mathcal T_d=\mathcal P_{d}^{\text{B}}(1/2)$ or not is an open problem. When $d=3$,
\cite{joe1996families} showed that
$\mathcal T_3 = \mathcal P_3^{\text{B}}(1/2)$.  However, unfortunately his approach
does not extend to $d\geq 4$.

\end{document}

%
%
%
%

%%% Local Variables:
%%% mode: latex
%%% TeX-master: t
%%% End: